\newcommand{\be}{\begin{equation}}
\newcommand{\ee}{\end{equation}}
\newtheorem{thm}{Theorem}[section]
\newtheorem{lem}[thm]{Lemma}
\newtheorem{prop}[thm]{Proposition}
\newtheorem{cor}[thm]{Corollary}
\newtheorem{de}[thm]{Definition}
\newtheorem{rem}[thm]{Remark}
\DeclareMathOperator{\card}{card}
\newcommand{\ba}{\begin{eqnarray}}
\newcommand{\ea}{\end{eqnarray}}
\begin{document}

\title{A Gram Determinant for Lickorish's Bilinear Form}
\bigskip

\author{Xuanting Cai\thanks{The author is with Mathematics Department, Louisiana State University, Baton Rouge, Louisiana  70803(email: xcai1@math.lsu.edu).}}

\date{}

\thispagestyle{empty}
\maketitle

\begin{abstract}
We use the Jones-Wenzl idempotents to construct a basis of the Temperley-Lieb algebra $TL_n$. 
This allows a short calculation for a Gram determinant of Lickorish's bilinear form on the Temperley-Lieb algebra.
\end{abstract}

\hspace{.2 in} {{\bf Keywords}: {\em Skein Theory, Temperley-Lieb Algebra.}}

\bigskip

\thispagestyle{empty}

\maketitle

\setcounter{page}{1}
\section{Introduction}
\setcounter{equation}{0}\label{intro}
In \cite{W}, Witten proposed the existence of 3-manifold invariants.
A mathematically rigorous definition was given by Reshetikhin and Turaev \cite{RT} using quantum groups and Kirby calculus \cite{K}.
Later, Lickorish \cite{L1} provided an alternative proof by using a bilinear form on the Temperley-Lieb algebra $TL_n$.
An important property Lickorish needed was that this bilinear form defined over $\mathbb{Z}[A,A^{-1}]$ is 
degenerate at certain $4(n+1)$th roots of unity and nondegenerate at $4i$th roots of unity for $i<n+1$.
Ko and Smolinsky obtained this result by using a recursive formula for the determinants of specific minors of this form \cite{KS}.
They did not give a closed form for the determinant. 
This was first done by Di Francesco, Golinelli and Guitter \cite{FGG}.
Di Francesco later gave a simpler proof.
In this paper, we give a short derivation by using a skein-theoretic approach together with a combinatorial proposition from Di Francesco \cite{F}.
In order to do this, we construct a nice basis $\mathfrak{D}_n$ for $TL_n$.
In fact, there have been several bases of $TL_n$ studied before.
See \cite{FGG}, \cite{F}, or \cite{GS}.
It turns out that $\mathfrak{D}_n$ is a rescaled version of the basis used in \cite{F},
but the properties of Jones-Wenzl idempotents significantly simplify the calculation.
Our skein-theoretic approach is motivated by the colored graph basis for TQFT modules developed in Blanchet, Habegger, Masbaum, Vogel's paper \cite{BHMV}.
A skein theoretic derivation of a Gram determinant for the type B Temperley-Lieb algebra is given in \cite{CP}.

\section{Temperley-Lieb Algebra}
\setcounter{equation}{0}
\label{TLAlgebra}
Let $F$ be an oriented surface with a finite collection of points specified in its boundary $\partial F$.
A link diagram in the surface $F$ consists of finitely many arcs and closed curves in $F$,
with a finite number of transverse crossings, each assigned over or under information.
The endpoints of the arcs must be the specified points in $\partial F$. We define the skein of $F$ as follows:

\begin{de}
Suppose $A$ is a variable. Let $\Lambda$ be the ring $\mathbb{Z}[A,A^{-1}]$
localized by inverting the multiplicative set generated by elements of $\{A^n-1\mid n\in\mathbb{Z}^+\}$.
The linear skein $\mathcal{S}(F)$ is the module of formal linear sums over $\Lambda$ of link
diagrams in $F$ quotiented by the submodule generated by the skein relations:
	\begin{enumerate}
		\item $L \cup U= \delta L$, where $U$ is a trivial knot, $L$ is a link in $F$ and $\delta=(-A^{-2}-A^2)$;
		\item 
		$\begin{minipage}{0.2in}\includegraphics[width=0.2in]{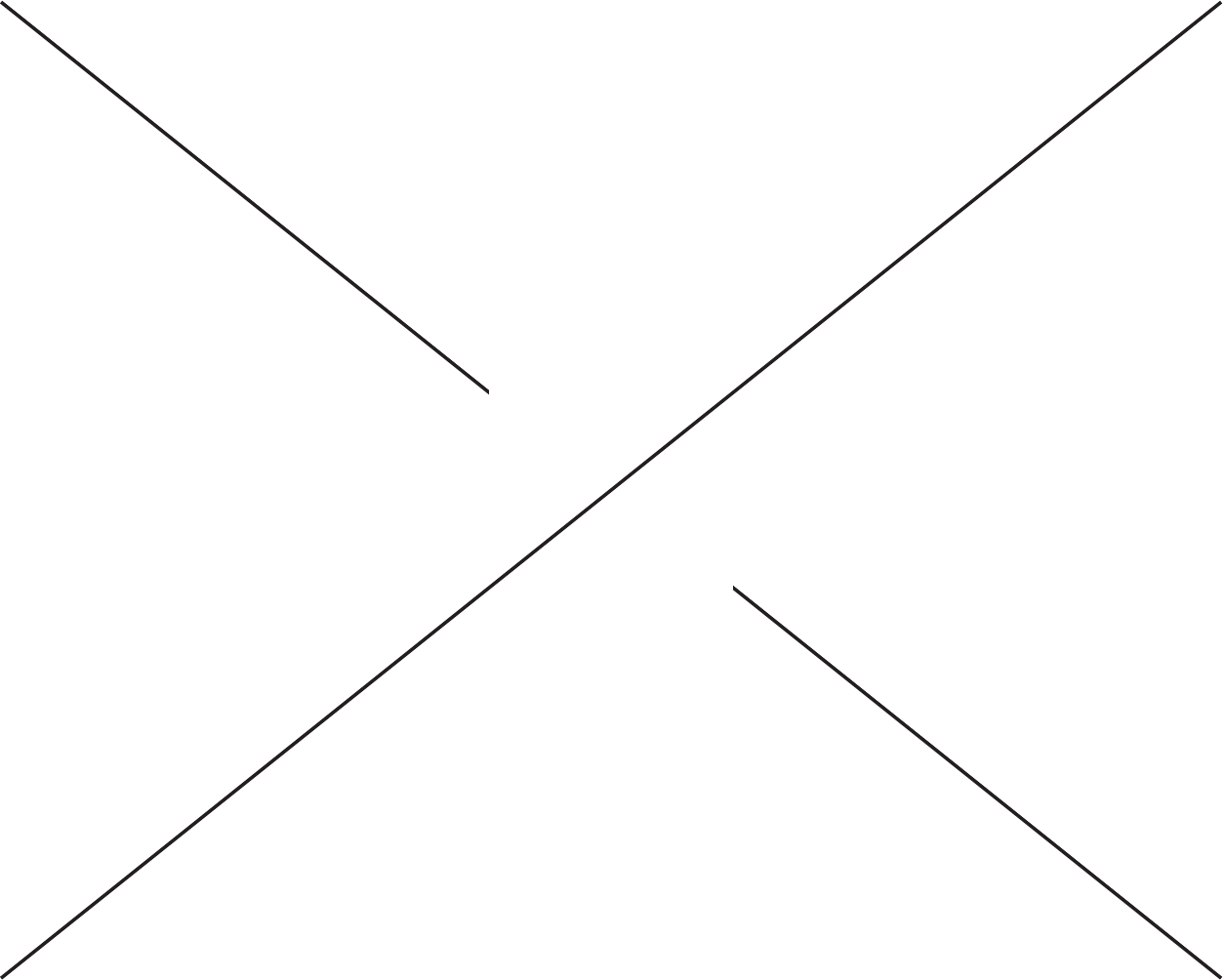}\end{minipage}=\ A^{-1}\ \begin{minipage}{0.2in}\includegraphics[width=0.2in]{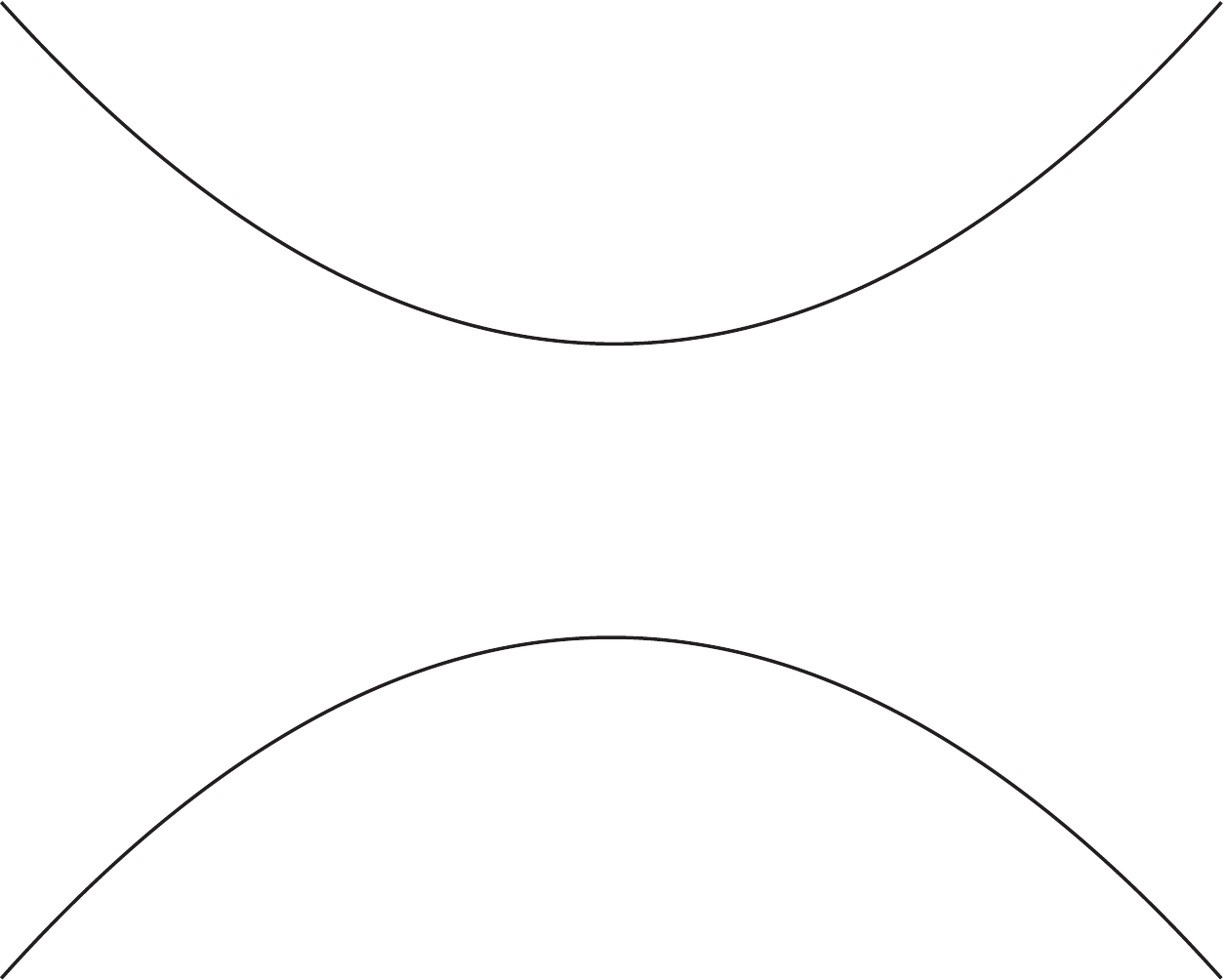}\end{minipage}\ +\ A\ 	 \begin{minipage}{0.2in}\includegraphics[width=0.2in]{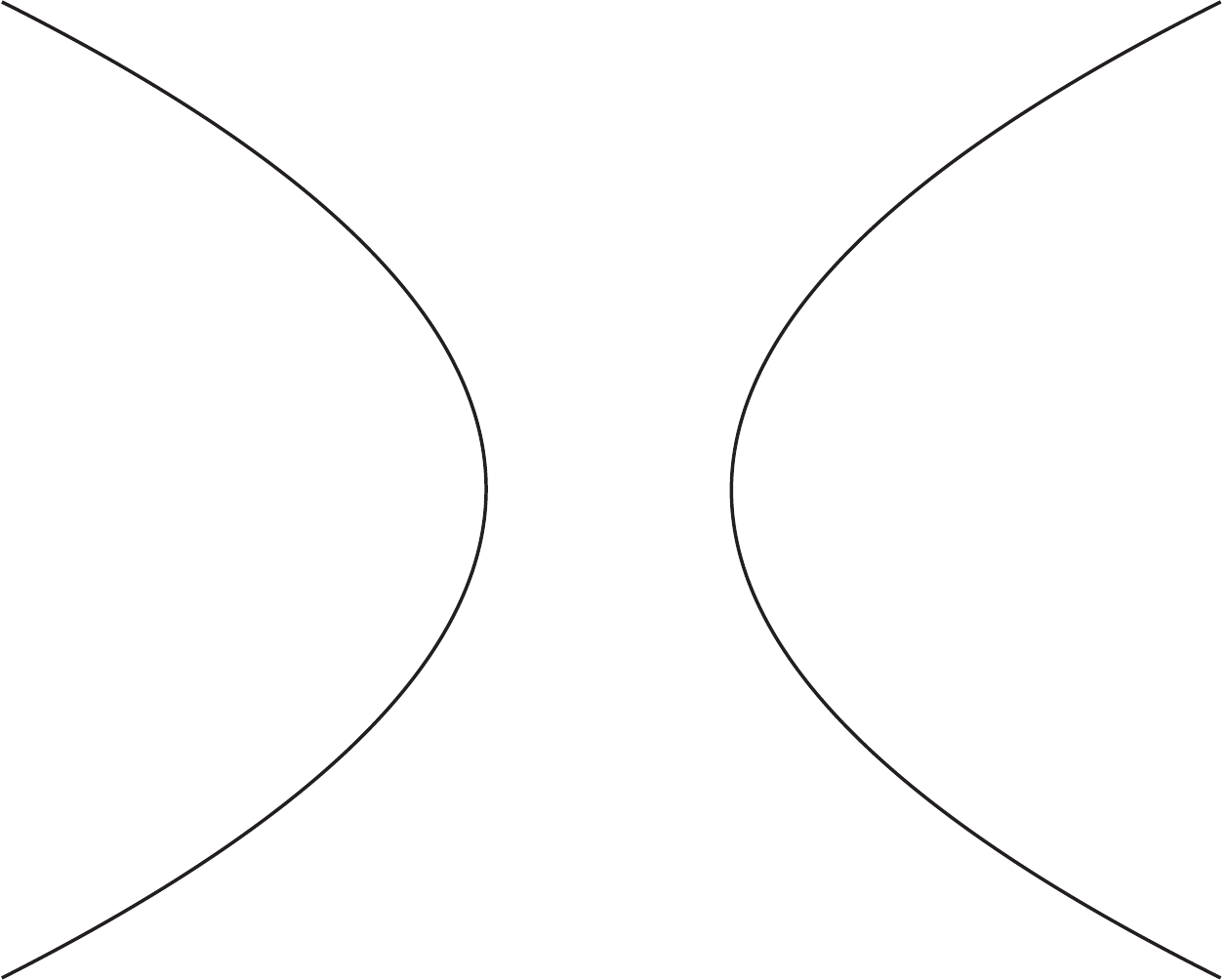}\end{minipage}$ .
	\end{enumerate}
\end{de}

Now, taking $F$ to be the 2-disk $D^2=I\times I$, we have:

\begin{de}
The $n^{th}$ Temperley-Lieb Algebra $TL_n$ is the linear skein
$\mathcal{S}(D^2,n)$, where $n$  means there are $n$ points specified in
$I\times\{0\}$ and $I\times\{1\}$ respectively.
\end{de}

It is well known that $TL_n$ has a basis, which consisting of non-crossing figures. We denote this basis by $\mathfrak{B}_n$.
Some special elements $\{1,e_1,...,e_{n-1}\}$ of the basis are shown in Figure \ref{f1}. 
As an algebra, $TL_n$ is generated by those special elements.

\begin{figure}[h]
\[ 1 =
            \begin{array}{c}
            \includegraphics[width=1in,height=1.2in]{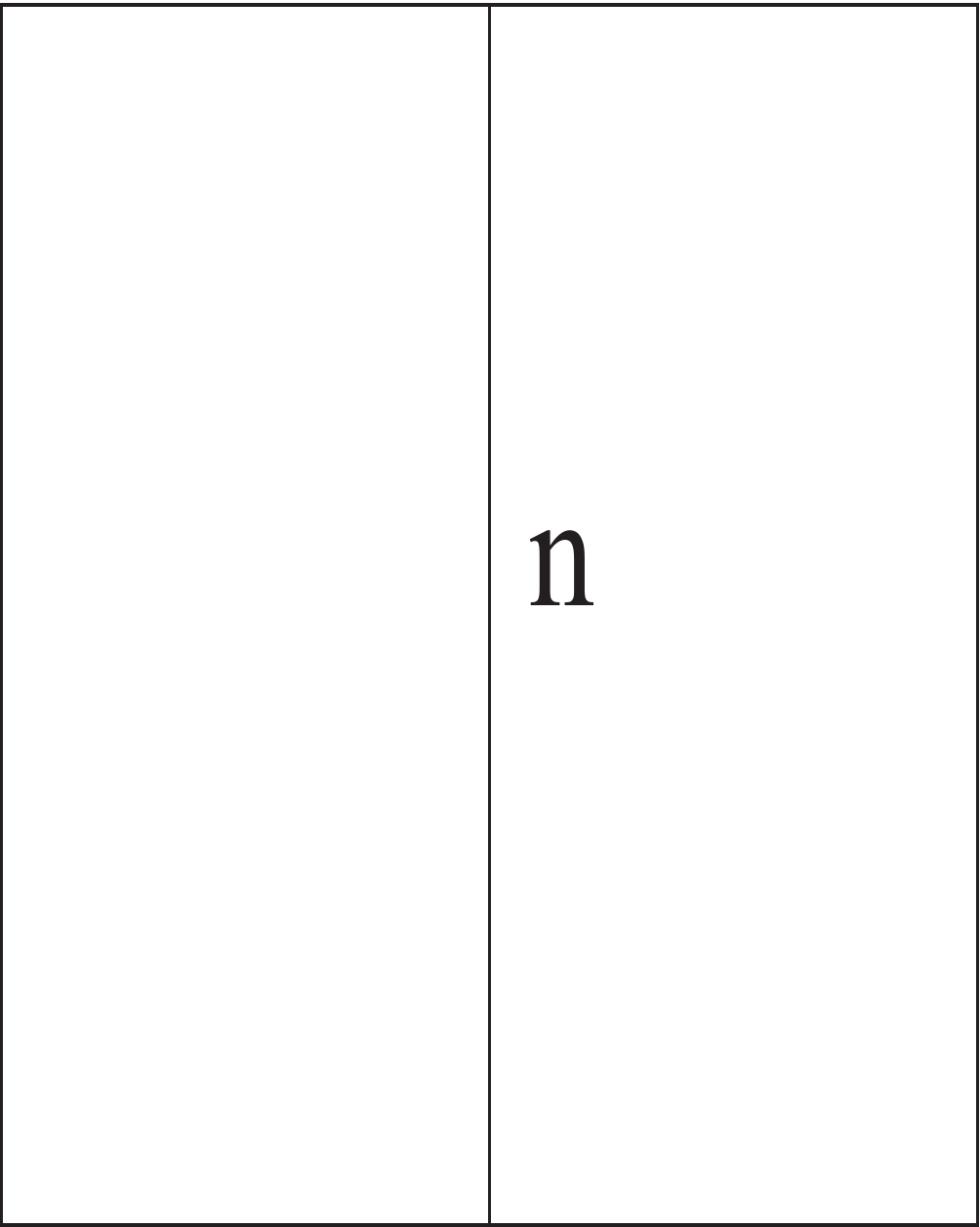}
            \end{array}
   e_i=
            \begin{array}{c}
            \includegraphics[width=1in,height=1.2in]{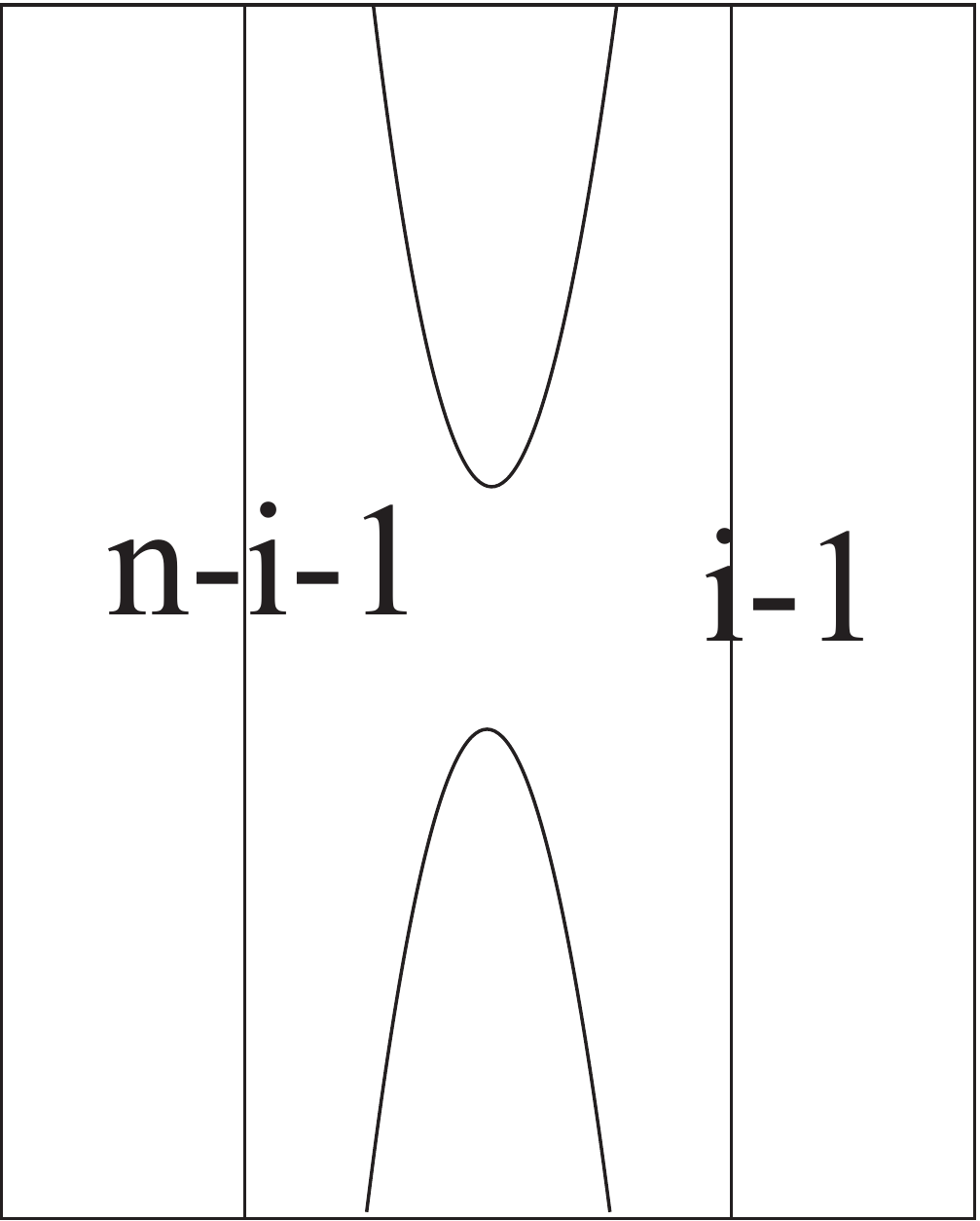}
            \end{array}
    \]
    \caption{The integer $i$ beside the arc means $i$ parallel copies of the arc.}
    \label{f1}
\end{figure}

A significant property of this algebra in quantum invariant theory is that there is a natural bilinear form on $TL_n$.
In \cite{L1}, Lickorish used this form to construct quantum invariants of 3-manifolds.
We construct this bilinear form with respect to the basis $\mathfrak{B}_n$ that we gave above:

\begin{de}
Define a map on $\mathfrak{B}_n\times\mathfrak{B}_n$ to $\Lambda$ as follows:
	\begin{center}	
	$G_n(s,r)=
	\begin{minipage}{0.2in}\includegraphics[width=0.2in]{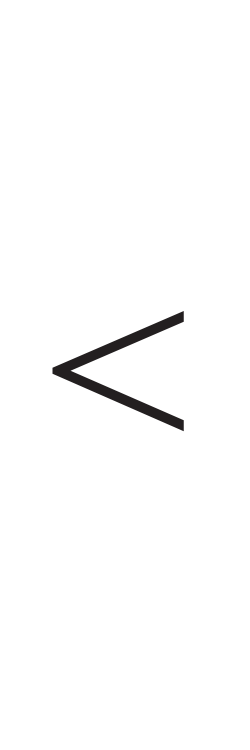}\end{minipage}
	\begin{minipage}{1in}\includegraphics[width=1in]{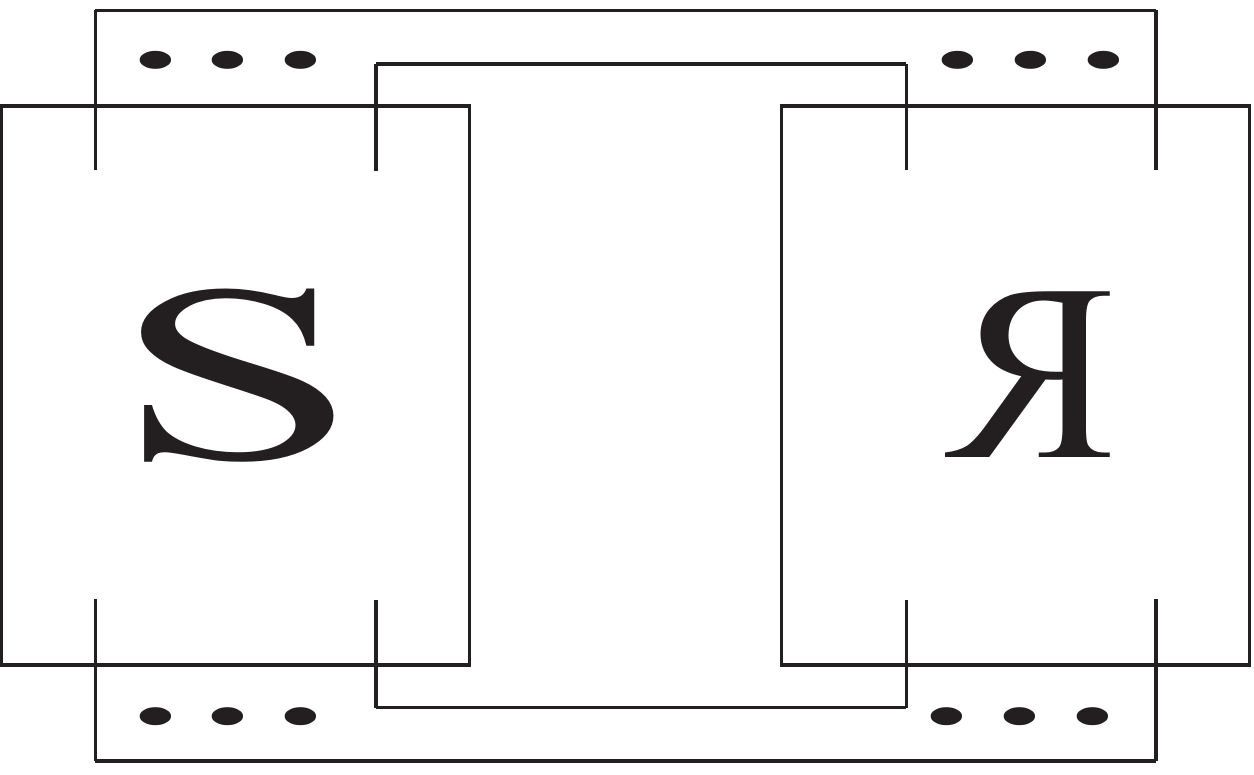}\end{minipage}
	\begin{minipage}{0.2in}\includegraphics[width=0.2in]{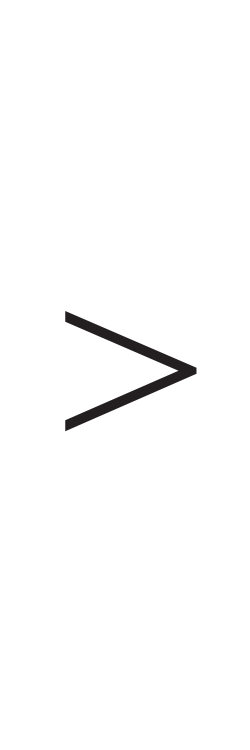}\end{minipage}$ ,
	\end{center}
where $s=\begin{minipage}{0.3in}\includegraphics[width=0.3in]{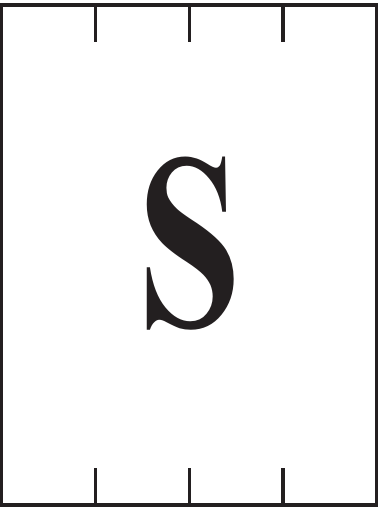}\end{minipage}$ and 
$r=\begin{minipage}{0.3in}\includegraphics[width=0.3in]{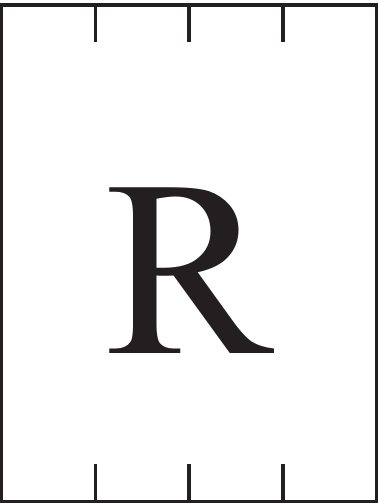}\end{minipage}$
are elements in $\mathfrak{B}_n$ and $<,>$ is the Kauffman bracket.
We extend this map to a bilinear form on $TL_n$, 
and still denote it by $G_n$.
We denote the determinant of $G_n$ with respect to $\mathfrak{B}_n$ by $\det(G_n)$.
\end{de}

In this paper, we give a simple proof of the determinant of this bilinear form with respect to the basis $\mathfrak{B}_n$,
which was also proved in \cite{FGG}.
The following is the main result.

\begin{thm}
\begin{equation}
\det(G_n)=\Delta_1^{c_n}\prod_{k=1}^n(\frac{\Delta_k}{\Delta_{k-1}})^{\alpha_k}
\notag
\end{equation}
where $\Delta_i=\frac{(-1)^i(A^{2(i+1)}-A^{-2(i+1)})}{A^2-A^{-2}}$, 
$c_n = \frac{1}{n+1}
   \left(
      \begin{array}{c}
       2n \\
       n
      \end{array}
   \right)
$, 
and
$\alpha_k =
        \left(
            \begin{array}{c}
            2n \\
            n-k
            \end{array}
        \right) -
        \left(
            \begin{array}{c}
            2n \\
            n-k-1
            \end{array}
        \right).
$
\end{thm}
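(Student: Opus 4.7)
The plan is to diagonalize $G_n$ by passing to a new basis $\mathfrak{D}_n$ of $TL_n$ built from the Jones-Wenzl idempotents. A typical element of $\mathfrak{D}_n$ has the form $u\,f_k\,v$, where $f_k$ is the $k$-th Jones-Wenzl idempotent, and $u, v$ are ``half-diagrams''---non-crossing arrangements with $n$ boundary points on one side, $k$ through-strands connected to $f_k$, and $(n-k)/2$ cups on the other side. The index $k$ stratifies $\mathfrak{D}_n$ by level; if $N_{n,k}$ denotes the number of half-diagrams at level $k$, then $\sum_k N_{n,k}^2 = C_n = \dim TL_n$, so the cardinality of $\mathfrak{D}_n$ matches $\dim TL_n$.

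First I would verify that $\mathfrak{D}_n$ is a basis whose change-of-basis matrix to $\mathfrak{B}_n$ has determinant $1$. Expanding $f_k \in TL_k$ as $\mathrm{id}_k$ plus a linear combination of diagrams containing turnbacks shows that $u\,f_k\,v$ equals $u\,v \in \mathfrak{B}_n$ modulo elements of strictly lower level. Ordering both bases by decreasing number of through-strands yields a unitriangular transition matrix, so $\det(G_n)$ may be computed directly in $\mathfrak{D}_n$.

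Next I would exploit the two defining properties of Jones-Wenzl idempotents---absorption ($f_k$ annihilates any turnback adjacent to it) and idempotency with $\mathrm{tr}(f_k)=\Delta_k$---to compute the Gram pairings on $\mathfrak{D}_n$. When $\langle u\,f_k\,v,\, u'\,f_{k'}\,v' \rangle$ is expanded as a closed Kauffman bracket, any mismatch in the level forces a turnback into the larger idempotent, so the pairing vanishes and $G_n$ becomes block-diagonal by level. Within the $k$-block, the closure separates into $\mathrm{tr}(f_k) = \Delta_k$ multiplied by the value of a smaller Gram-type pairing on the $(u,u')$ and $(v,v')$ halves, producing extra loops and lower-order $\Delta_j$ factors. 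The resulting determinant takes the shape $\prod_k \Delta_k^{N_{n,k}^2}\cdot(\text{products of lower-}\Delta_j\text{ factors})$.

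The main obstacle is converting this raw product into the compact form of the theorem. This requires two combinatorial identifications: the accumulated $\Delta_1=\delta$ loop contribution must evaluate to the Catalan number $c_n = \frac{1}{n+1}\binom{2n}{n}$, and the level multiplicities $N_{n,k}^2$ must rearrange, via partial summation, into the ballot-type exponents $\alpha_k = \binom{2n}{n-k} - \binom{2n}{n-k-1}$ of the ratios $\Delta_k/\Delta_{k-1}$. This is precisely where the combinatorial proposition of Di Francesco \cite{F} enters---it supplies the recursion among the half-diagram counts needed to telescope the raw product into the stated closed form. The delicate point is the precise loop-counting; the Jones-Wenzl framework reduces it to a clean combinatorial statement, which Di Francesco's proposition then resolves.
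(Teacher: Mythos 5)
Your outline takes a genuinely different basis from the paper's --- you stratify by through-strands and use elements $u\,f_k\,v$ with a single Jones--Wenzl idempotent in the middle, whereas the paper's $\mathfrak{D}_n$ is indexed by Dyck paths $(a_1,\dots,a_{2n-1})$ and consists of chains of trivalent vertices with idempotents on every internal edge. The consequence is decisive: the paper's basis is \emph{fully orthogonal} (Lemma 4.2/\ref{7}), so $\det(G_n)$ is literally the product over Dyck paths of the explicit diagonal entries $\Gamma(a_1,a_2)\cdots\Gamma(a_{2n-2},a_{2n-1})\Delta_1$, and the only remaining work is counting, over all Dyck paths, the down-steps from height $k$ to $k-1$; that count is $\alpha_k$, obtained from Di Francesco's bijection between such marked Dyck paths and lattice paths from $(0,0)$ to $(2n,2k)$ plus the reflection principle. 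Your basis only \emph{block}-diagonalizes $G_n$. Writing $\langle u f_k v,\,u'f_k v'\rangle=\mathrm{tr}(u f_k v\overline{v'}f_k\overline{u'})$ and using $f_k\,x\,f_k=c(x)f_k$ for $x\in TL_k$, the $k$-block is $\Delta_k$ times a tensor square of the half-diagram Gram matrix $C_k=(c(\overline{u'}u))$, so its determinant is $\Delta_k^{N_{n,k}^2}\det(C_k)^{2N_{n,k}}$. Your proposal never computes $\det(C_k)$; it is simply described as ``a smaller Gram-type pairing'' producing ``lower-order $\Delta_j$ factors.'' But $\det(C_k)$ is itself a meander-type Gram determinant essentially as hard as the original problem --- it is the core of the Di Francesco--Golinelli--Guitter computation --- so this is the genuine gap, not a routine bookkeeping step.

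Two further points would fail as stated. First, the role you assign to ``Di Francesco's combinatorial proposition'' is not what that proposition does: in the paper it is a bijection $\mathcal{S}_k\to\mathcal{D}_{(2n,2k)}$ used to count down-steps at height $k$; it does not supply ``the recursion among the half-diagram counts needed to telescope the raw product,'' and no such telescoping identity relating $N_{n,k}^2$ to $\alpha_k$ by ``partial summation'' is exhibited. (Indeed $\alpha_k=\binom{2n}{n-k}-\binom{2n}{n-k-1}=N_{2n,2k}$ is a count of \emph{half}-diagrams on $2n$ points, not a partial sum of squares $N_{n,j}^2$, so the identity you would need is exactly the content of the determinant evaluation you are trying to prove.) Second, the exponent $c_n$ of $\Delta_1$ is asserted (``must evaluate to the Catalan number'') rather than derived; in the paper it falls out immediately because each of the $C_n$ diagonal entries carries exactly one factor $\Delta_{a_{2n-1}}=\Delta_1$. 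To repair your argument you would either have to evaluate $\det(C_k)$ by an independent induction, or refine your basis within each level --- which, pushed to its conclusion, reproduces the paper's Dyck-path basis.
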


\begin{rem}
From now on, we will use $\card$ to denote the cardinality of a set and $\det$ the determinant
of a matrix.
\end{rem}

\section{Properties of $TL_n$}
\label{prop}

In the 1990's, the properties of $TL_n$ were studied by Lickorish \cite{L2}, Masbaum-Vogel \cite{MV},
Kauffman-Lins \cite{KL} and some other people. Below we will summarize some results
on $TL_n$ that we will be using.

In this algebra, there is a sequence of idempotents, which are very
important in constructing 3-manifold invariants.  We will mainly use
these idempotents to construct a basis for $TL_n$. They are
defined as follows:

\begin{prop}\label{1}
There is a unique element $f_n\in TL_n$, called $n^{th}$ Jones-Wenzl
idempotent, such that
	\begin{enumerate}
		\item $f_ne_i=0=e_if_n$ for $1\leq i\leq n-1$;
		\item $(f_n-1)$ belongs to the subalgebra generated by $e_1,...,e_{n-1}$;
		\item $f_nf_n=f_n$.
	\end{enumerate}
\end{prop}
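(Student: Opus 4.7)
The plan is to prove uniqueness first (since it is cleaner and also shows that property (3) is automatic from (1) and (2)) and then construct $f_n$ by induction on $n$ via Wenzl's recursion.

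For uniqueness, suppose $f_n$ and $g_n$ both satisfy the three conditions. By (2), write $g_n = 1 + y$ where $y$ lies in the subalgebra $J$ generated by $e_1,\dots,e_{n-1}$. Every element of $J$ is a linear combination of nonempty words in the $e_j$, each of which ends on the right with some $e_j$, so by property (1) applied to $f_n$ we get $f_n y = 0$, hence $f_n g_n = f_n$. By the symmetric argument using $f_n = 1 + y'$ and $e_i g_n = 0$, we get $f_n g_n = g_n$, so $f_n = g_n$. The same computation with $f_n = g_n$ shows $f_n^2 = f_n$, so property (3) follows automatically from (1) and (2) (together with existence).

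For existence, set $f_1 = 1$. Assuming $f_n$ has been constructed, identify $TL_n \hookrightarrow TL_{n+1}$ by adjoining a vertical strand on the right, still writing $f_n$ for the image. Define
\begin{equation}
f_{n+1} \;=\; f_n \;-\; \mu_n\, f_n\, e_n\, f_n,
\notag
\end{equation}
where the scalar $\mu_n$ is to be determined (it will turn out to be $\Delta_{n-1}/\Delta_n$). Property (2) is immediate. For (1) with $i<n$, the generator $e_i$ commutes with $e_n$ and satisfies $f_n e_i = 0$ by the inductive hypothesis, so $f_{n+1} e_i = 0$. For $i = n$, one computes $e_n f_n e_n$: diagrammatically this equals a scalar times $e_n$ because $f_n$ can be ``closed off'' on its rightmost strand, giving $e_n f_n e_n = (\Delta_n/\Delta_{n-1})\, e_n$ by induction and the standard evaluation of closures of Jones--Wenzl idempotents. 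Choosing $\mu_n = \Delta_{n-1}/\Delta_n$ then yields $f_{n+1} e_n = 0$, and symmetry handles $e_n f_{n+1} = 0$.

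The main obstacle is the skein calculation identifying $e_n f_n e_n$ with the scalar multiple $(\Delta_n/\Delta_{n-1}) e_n$, since this is what pins down the recursion coefficient and drives the induction. This requires both the closure evaluation of $f_{n-1}$ (i.e.\ the value $\Delta_{n-1}$) and nonvanishing of the $\Delta_k$, which is where the choice of ground ring $\Lambda$ matters: inverting the elements $A^m-1$ guarantees $\Delta_k \neq 0$ for all $k \le n$, so the recursion is well defined throughout the induction.
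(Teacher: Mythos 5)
The paper does not prove this proposition at all: it is stated in Section~\ref{prop} as one of several background facts quoted from the literature (Lickorish, Masbaum--Vogel, Kauffman--Lins), so there is no in-paper argument to compare against. Your proof is the standard one and is essentially correct: the absorption argument for uniqueness (and for deducing (3) from (1) and (2)) is right, provided one reads ``subalgebra generated by $e_1,\dots,e_{n-1}$'' as the non-unital subalgebra spanned by nonempty words --- which is the only sensible reading, since the unital one is all of $TL_n$ --- and Wenzl's recursion $f_{n+1}=f_n-\frac{\Delta_{n-1}}{\Delta_n}f_ne_nf_n$ handles existence. Two small points of precision: in the uniqueness step it is the \emph{leftmost} letter of each word that kills $f_n y$ (and the rightmost that kills $y'g_n$), though property (1) is two-sided so both work; and the key skein identity is really $e_nf_ne_n=\frac{\Delta_n}{\Delta_{n-1}}f_{n-1}e_n$ rather than $\frac{\Delta_n}{\Delta_{n-1}}e_n$ --- the extra $f_{n-1}$ is harmless because it is absorbed when you left-multiply by $f_n$, but as written the scalar identity is not literally true. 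Your observation that the localization defining $\Lambda$ makes every $\Delta_k$ a unit, so the recursion never divides by zero, is exactly the reason the paper works over $\Lambda$ rather than $\mathbb{Z}[A,A^{-1}]$.
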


\begin{rem}
We can put a box on the segment to denote the idempotent.
But we will abbreviate the box from now on.
Hence, we put an $n$ beside the string to denote $n$ parallel strings with an idempotent inserted, if otherwise is not stated.
For example, we denote the figure on the left in Figure \ref{f2} by the figure on the right,
which will be used frequently in this paper.
\end{rem}

\begin{figure}[h]
  \centering
	\includegraphics[width=1in,height=1in]{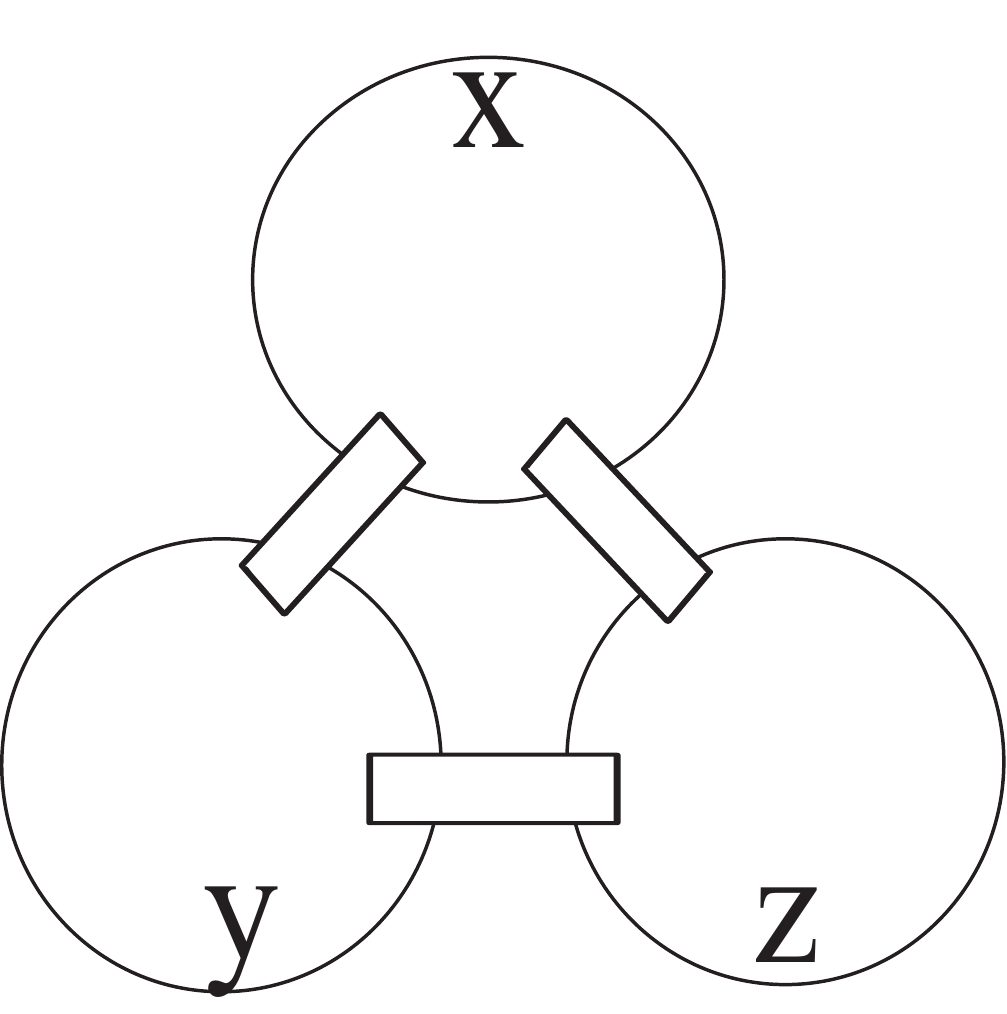}\hskip 0.5in
	\includegraphics[width=1in,height=1in]{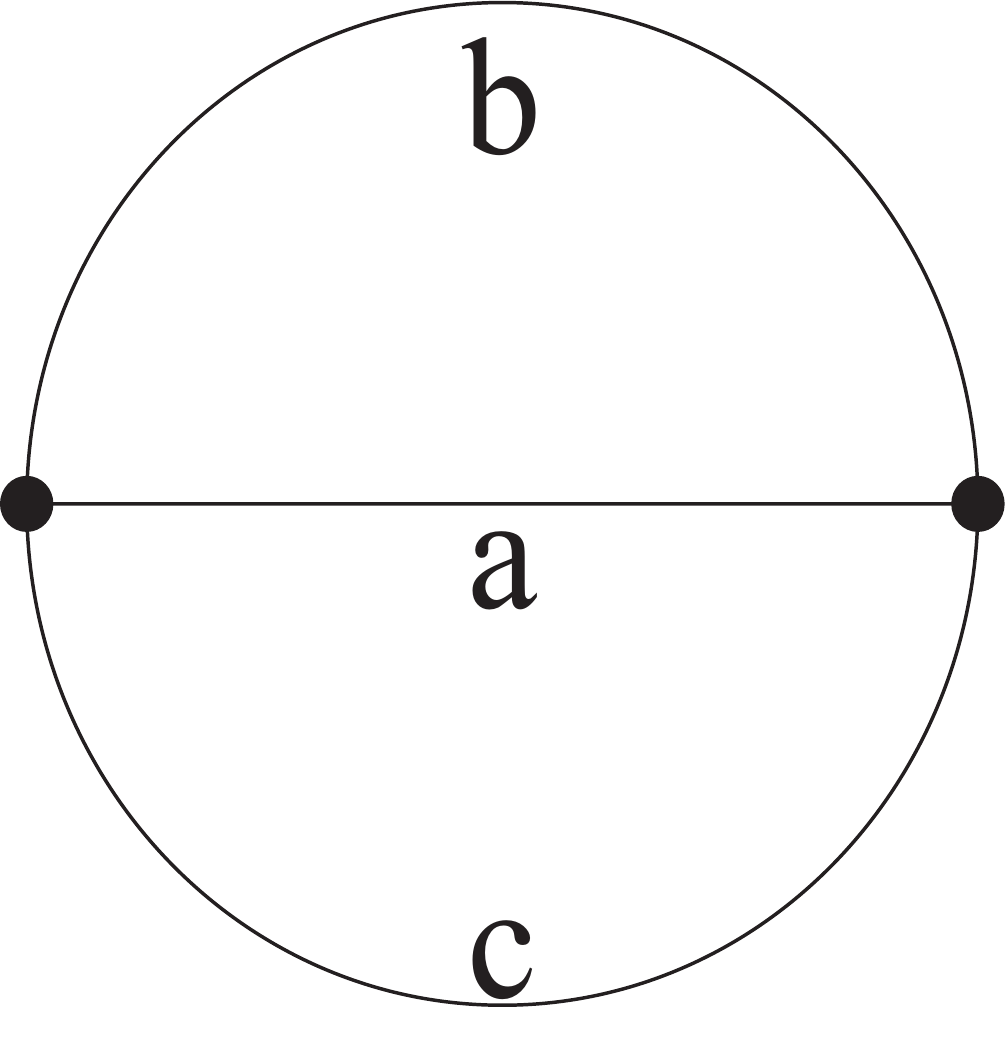}
	\caption{ The left figure lies in $S^2$. The right figure is an abbreviation of the left one, where $a=x+y,b=y+z,c=x+z$.
	We denote the value of this diagram in $\mathcal{S}(S^2)$by $\Theta(a,b,c)$.}

\label{f2}

\end{figure}

For the next property, we first set up some notation. Consider the
skein space of the disc $D$ with $a+b+c$ specified points on its
boundary. The points are partitioned into three sets of $a,b,c$
consecutive points. The effect of adding the idempotents
$f_a,f_b,f_c$ just outside every diagram in such a disc with
specified points is to map the skein space of the disc into a
subspace of itself. We denote this subspace by $T_{a,b,c}$.

\begin{de}

The triple $(a,b,c)$ of nonnegative integers will be called
admissible if $a+b+c$ is even, $a\leq b+c$ and $b\leq c+a$ and $c\leq a+b$.

\end{de}

\begin{prop}\label{3}
\[
dim(T_{a,b,c})=
\left\{
\begin{array}{l l}
  0 & \quad \text{if $a,b,c$ are not admissible};\\
  1 & \quad \text{if $a,b,c$ are admissible}.\\
\end{array} \right.
\]

\end{prop}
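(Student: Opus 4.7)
The plan is to reduce the dimension question to a count of crossingless matchings on the disc boundary and then to invoke the classical non-vanishing of the theta evaluation. The skein space $\mathcal{S}(D, a+b+c)$ has the standard basis $\mathfrak{M}$ of crossingless matchings of the boundary points, and the operator $\pi$ that affixes $f_a, f_b, f_c$ just outside the three groups of consecutive boundary points is a projection by Proposition \ref{1}(3). Hence $T_{a,b,c} = \mathrm{Im}(\pi)$ is spanned by $\{\pi(\gamma)\}_{\gamma \in \mathfrak{M}}$. Moreover, if $\gamma$ has any strand both of whose endpoints lie inside a single one of the three arcs, that strand appears as a little cup just inside the corresponding Jones-Wenzl box, producing a factor of the form $f_k e_i$; by Proposition \ref{1}(1) this vanishes, so only matchings with no intra-arc strand can survive.

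If $(a,b,c)$ is not admissible, I can conclude immediately. When $a+b+c$ is odd, $\mathfrak{M}$ is empty and $T_{a,b,c}=0$. When a triangle inequality fails, say $a > b+c$, pigeonhole forces every $\gamma$ to pair at least two $A$-arc points with each other (there are only $b+c < a$ non-$A$ points to absorb them), so every $\pi(\gamma)$ vanishes and again $T_{a,b,c}=0$.

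For admissible $(a,b,c)$, set $x=(a+b-c)/2$, $y=(b+c-a)/2$, $z=(a+c-b)/2$; admissibility ensures these are nonnegative integers. Any $\gamma$ with no intra-arc strand must have exactly $x$, $y$, $z$ arcs between $A$ and $B$, between $B$ and $C$, and between $A$ and $C$ respectively, since the counts $x+z=a$, $x+y=b$, $y+z=c$ are forced. I would prove there is a unique such $\gamma_0$ by induction on $a+b+c$: the strand through $A_1$ is forced to end at $C_c$ when $b < a+c$ (otherwise the chord separates off a region containing only $C$-points, which could only pair intra-arc), and at $B_{b-c}$ when $b = a+c$; in either case, deleting that strand reduces to a strictly smaller admissible triple. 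So $T_{a,b,c}$ is spanned by the single element $\pi(\gamma_0)$, giving $\dim T_{a,b,c} \leq 1$.

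To upgrade this to equality I would compute $\langle \pi(\gamma_0), \pi(\gamma_0) \rangle$ by reflecting one copy of $\gamma_0$ and closing up in $S^2$; this yields the theta evaluation $\Theta(a,b,c)$ of Figure \ref{f2}. The standard closed-form expression for $\Theta(a,b,c)$ (from Masbaum-Vogel, Kauffman-Lins) shows it is a nonzero element of $\Lambda$ precisely when $(a,b,c)$ is admissible, so $\pi(\gamma_0) \neq 0$ and $\dim T_{a,b,c} = 1$. The combinatorial steps above are routine; the main obstacle is this last non-vanishing, which I would import from the cited literature rather than derive inside this short proof.
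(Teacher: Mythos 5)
Your argument is essentially correct, but note that the paper itself offers no proof of Proposition \ref{3}: Section \ref{prop} explicitly presents it as background imported from \cite{L2}, \cite{MV}, \cite{KL}, so there is no in-paper argument to compare against. What you have written is the standard proof from those references, and all the main steps are sound: $\pi$ is idempotent by Proposition \ref{1}(3), so $T_{a,b,c}$ is spanned by the images of the crossingless matchings; matchings with an intra-arc strand are killed by Proposition \ref{1}(1); parity and pigeonhole dispose of the non-admissible cases; the inter-arc chord counts $x,y,z$ are forced by the linear system (matching the caption of Figure \ref{f2}), and planarity makes the surviving matching $\gamma_0$ unique; finally $\langle \pi(\gamma_0),\pi(\gamma_0)\rangle=\Theta(a,b,c)$ is nonzero. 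Two small points deserve one more sentence each if you write this out in full. First, an intra-arc strand need not join adjacent boundary points, so it is not literally ``a little cup'' against the box; you should observe that by planarity an innermost such strand does join adjacent points, producing the factor $f_a e_i=0$. Second, the non-vanishing of $\Theta(a,b,c)$ is where the choice of ground ring matters: over the $\Lambda$ of this paper (which inverts all $A^n-1$) every $\Delta_k$, and hence the quantum-factorial formula for $\Theta(a,b,c)$, is invertible, so the conclusion holds; over a ring where some $\Delta_k$ vanishes the dimension statement can fail, and your appeal to the ``classical non-vanishing'' should be pinned to this localization rather than left generic. With those caveats your route is exactly the one the cited literature takes, and it buys the paper a self-contained treatment it currently does not have.
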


When $(a,b,c)$ is admissible, $T_{a,b,c}$ has a generator $g$ on the left in
Figure \ref{f5}. We usually denote it in a simple way by the diagram on the right
in the figure.

\begin{figure}[h]
	 \centering
   \includegraphics[width=1in,height=1in]{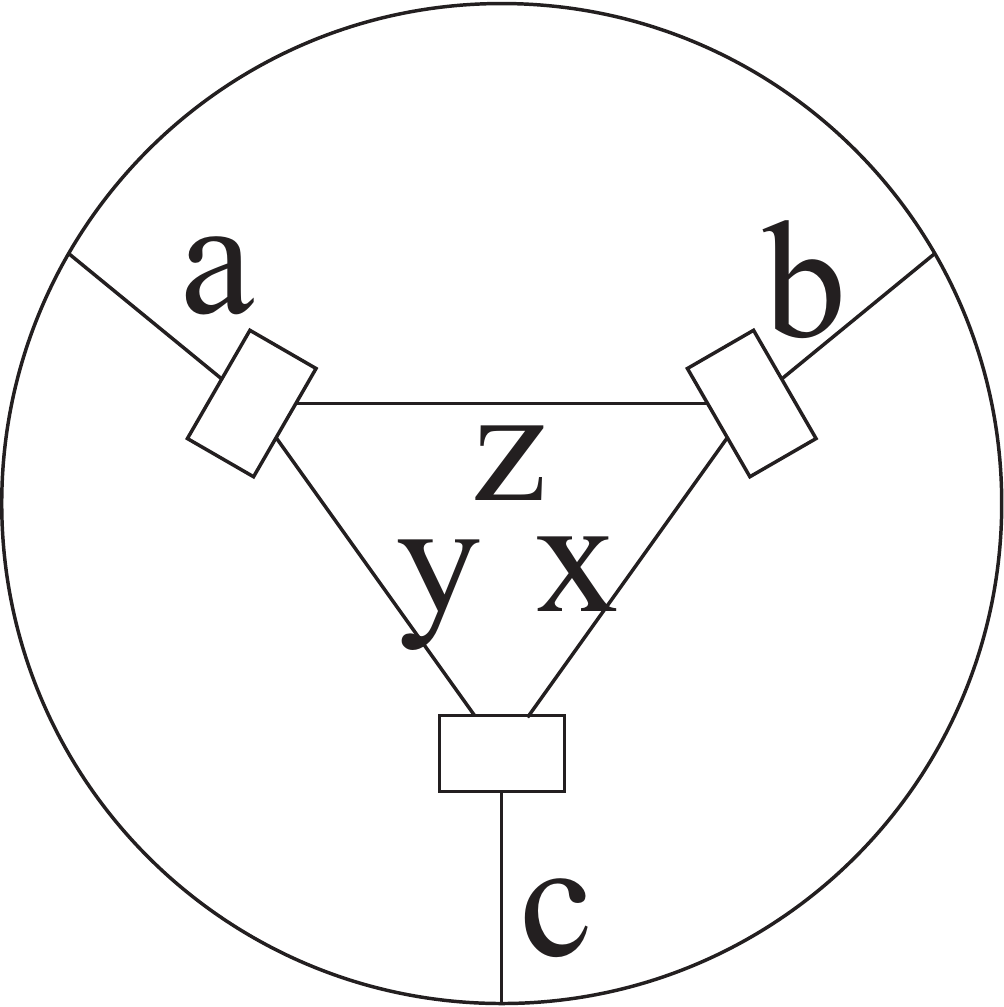}\hskip 0.5in
   \includegraphics[width=1in,height=1in]{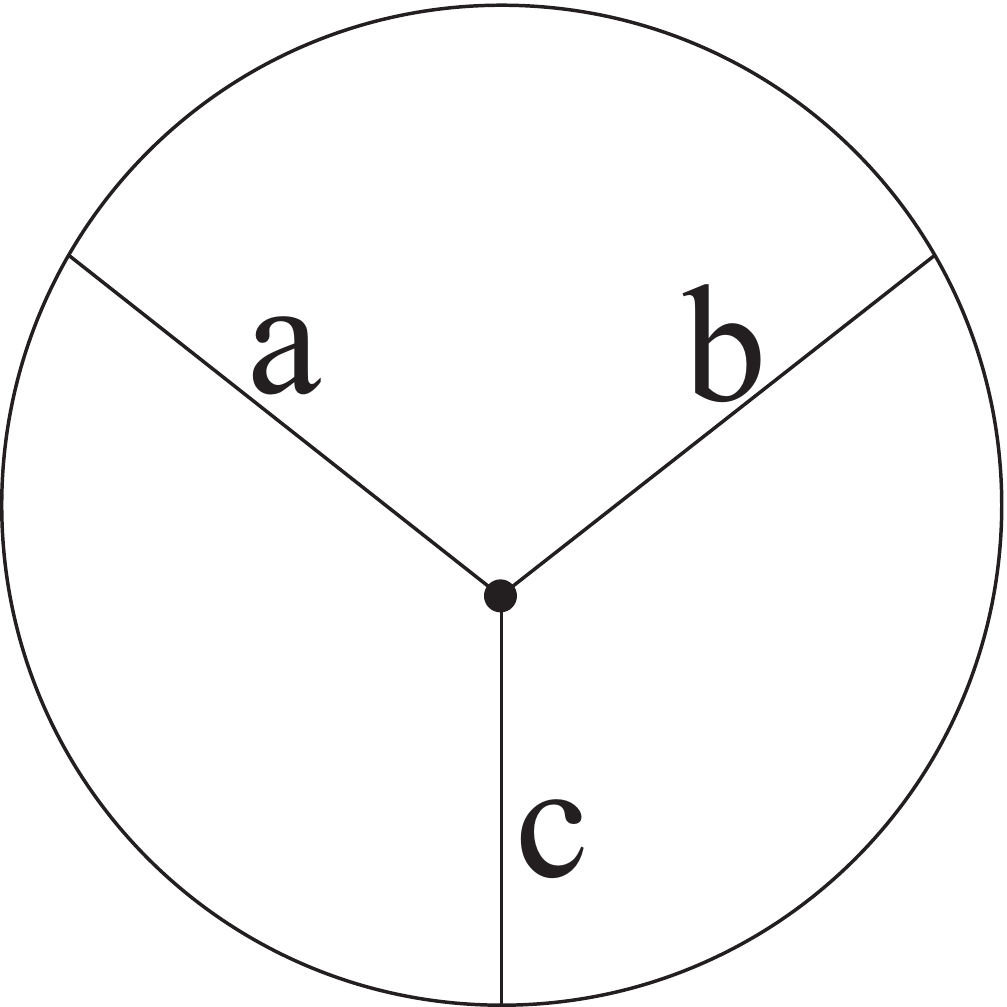}
   \caption{On the left is the generator of $T_{a,b,c}$. On the right is an abbreviation of the  generator.}
   \label{f5}
\end{figure}

\begin{prop}\label{4}
\begin{equation}
\begin{minipage}{1in}\includegraphics[width=1in]{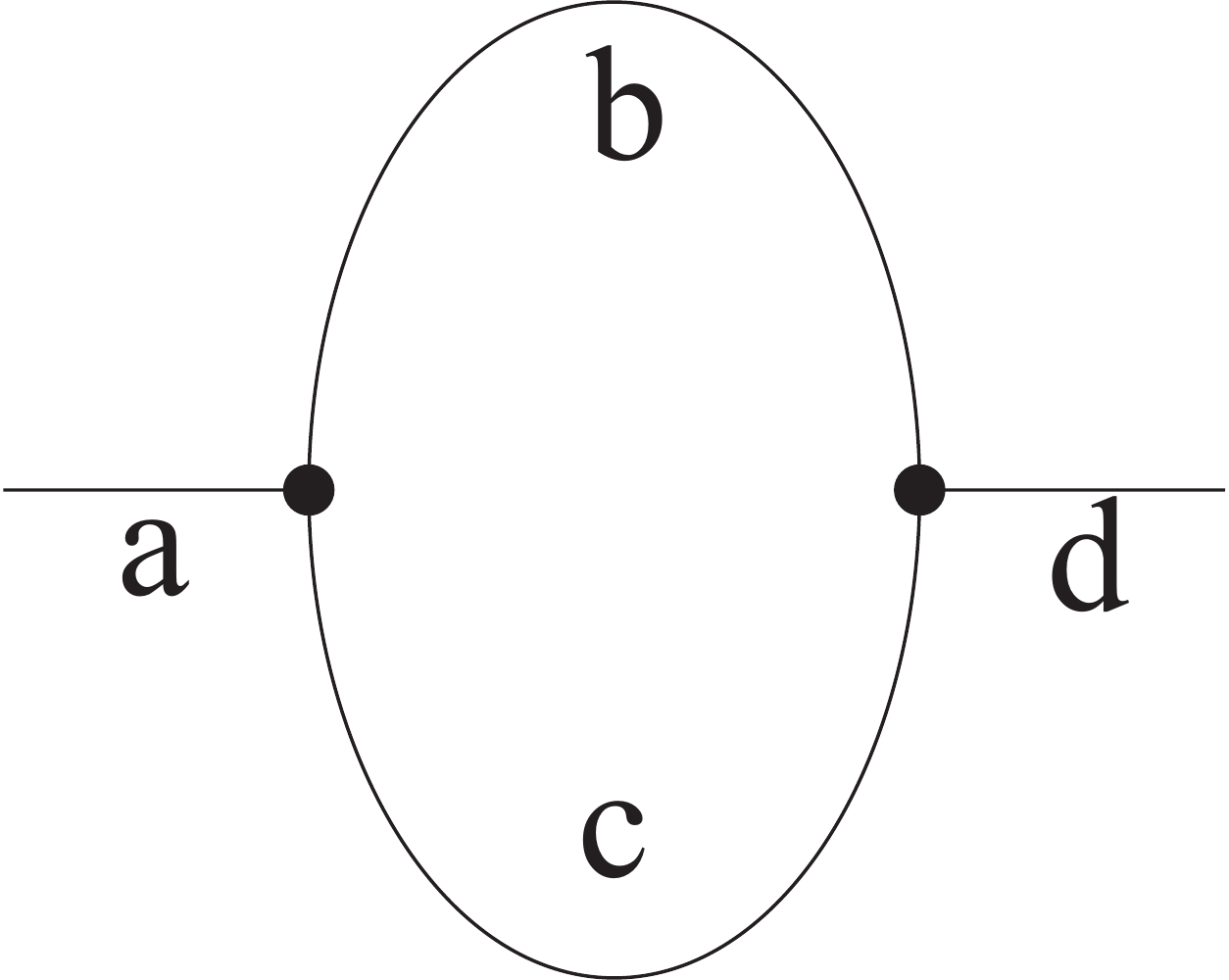}\end{minipage}
=\frac{\delta_{ad}\Theta(a,b,c)}{\Delta_a}\ 
\begin{minipage}{1in}\includegraphics[width=1in]{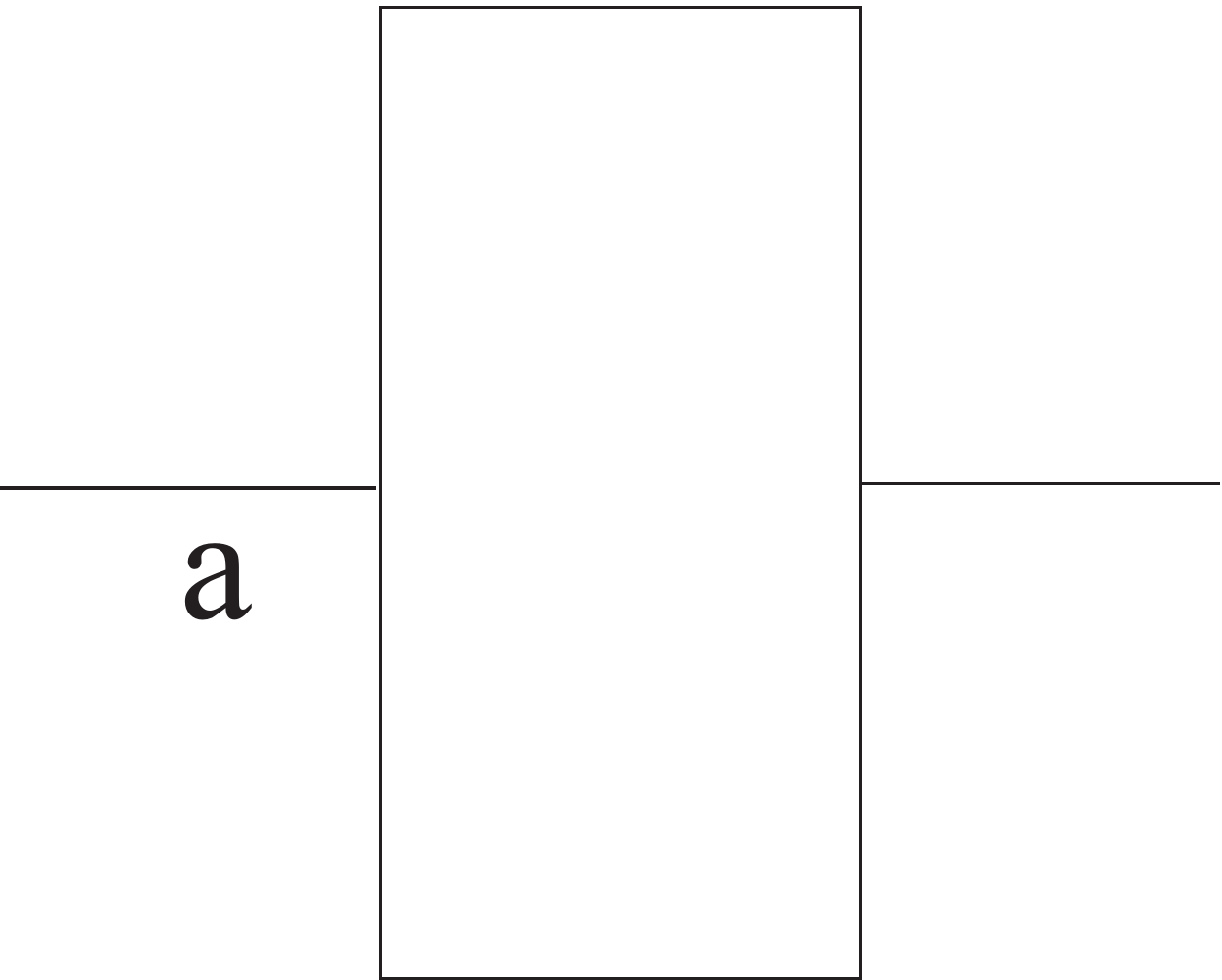}\end{minipage}\ ,
\notag
\end{equation}
where $\delta_{ad}$ is the Kronecker delta.
\end{prop}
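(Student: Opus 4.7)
The plan is to show that the left-hand diagram is always a scalar multiple of a single $a$-strand with $f_a$ implicit, and then identify the scalar by closing up and using the definition of $\Theta(a,b,c)$. This is the standard ``bubble'' identity, and the two steps are essentially (i) a Schur-lemma-type one-dimensionality argument coming from Proposition~\ref{1}, and (ii) a trace computation.

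For Step~(i), I would view the left-hand side as an element of the skein of a rectangle with $a$ marked points on the top edge and $d$ marked points on the bottom edge, with $f_a$ implicit at the top and $f_d$ implicit at the bottom. Expanding the middle region (the trivalent vertices carry $f_b$ and $f_c$, and these can in principle be expanded in the non-crossing basis $\mathfrak{B}$) produces a linear combination of non-crossing tangles from $a$ to $d$ points, each sandwiched between $f_a$ and $f_d$. Any such tangle that contains a cap joining two adjacent top points is annihilated by $f_a e_i = 0$ (Proposition~\ref{1}(1)); any that contains a cup joining two adjacent bottom points is annihilated by $e_i f_d = 0$. The only non-crossing tangle in which every top point is connected straight down to a bottom point is the identity on $a$ strands, and this requires $a=d$. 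Therefore the diagram equals $\delta_{ad}\lambda\, f_a$ for some scalar $\lambda \in \Lambda$.

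For Step~(ii), I would compute $\lambda$ when $a=d$ by closing the external $a$-strand on the outside. On the left, the closure produces exactly the theta network of Figure~\ref{f2}, whose value is $\Theta(a,b,c)$ by definition. On the right, the closure of $f_a$ is the loop value $\Delta_a$, so the closure yields $\lambda\Delta_a$. Equating gives $\lambda = \Theta(a,b,c)/\Delta_a$.

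The main obstacle is making the case-analysis in Step~(i) airtight: one must ensure that after expansion every surviving non-crossing tangle is forced to be the identity when $a=d$, and that no tangle survives when $a\neq d$. Rather than enumerate tangles by hand, a cleaner way is to appeal to Proposition~\ref{3}: the diagram represents a morphism between the simple objects labeled by $a$ and $d$ in the Temperley-Lieb category, and the corresponding Hom space is one-dimensional when $a=d$ (spanned by $f_a$) and zero otherwise; this is exactly the admissibility/one-dimensionality statement applied to a suitable disc configuration. Once that structural fact is in place, the scalar computation in Step~(ii) is immediate.
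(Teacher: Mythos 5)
The paper does not prove Proposition~\ref{4} at all: Section~\ref{prop} explicitly only ``summarizes'' known properties of $TL_n$ (from Lickorish, Masbaum--Vogel, Kauffman--Lins), so there is no in-paper argument to compare against. Your proof is the standard and correct one: the one-dimensionality in Step~(i) is exactly Proposition~\ref{3} applied to the triple $(a,d,0)$, whose admissibility forces $a=d$ with generator $f_a$, and the trace computation in Step~(ii) then pins down the scalar as $\Theta(a,b,c)/\Delta_a$, which is legitimate since $\Delta_a$ is invertible in $\Lambda$. The only point worth tightening in the hands-on version of Step~(i) is the observation that any non-identity non-crossing tangle necessarily contains an \emph{innermost} (hence adjacent) cap or cup, which is what lets you invoke $f_ae_i=0$; your cleaner appeal to Proposition~\ref{3} sidesteps this entirely and is the preferable route.
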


Similarly, consider the skein space of the disc $D$ with $a+b+c+d$ specified points on its
boundary. The points are partitioned into four sets of $a,b,c,d$
consecutive points. The effect of adding the idempotents
$f_a,f_b,f_c,f_d$ just outside every diagram in such a disc with
specified points is to map the skein space of the disc into a
subspace of itself. We denote this subspace by $Q_{a,b,c,d}$.

\begin{prop}\label{5}
A base for $Q_{a,b,c,d}$ is the set of elements as in Figure \ref{f6}, where $j$ takes all values for which both $(a,b,j)$ and $(c,d,j)$ are admissible.
\end{prop}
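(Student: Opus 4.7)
The plan is to establish the basis by a filtration argument keyed to a cutting arc. Let $\gamma \subset D$ be an arc separating the $a+b$ boundary points (carrying $f_a, f_b$) from the $c+d$ boundary points (carrying $f_c, f_d$). For each non-crossing basis element of $\mathcal{S}(D,a+b+c+d)$, the number $j$ of strands crossing $\gamma$ is a well-defined nonnegative integer with $a+b-j$ (equivalently $c+d-j$) even. Let $F_k \subset Q_{a,b,c,d}$ be the image, after attaching $f_a,f_b,f_c,f_d$ to the four boundary arcs, of the span of those basis diagrams whose representative has at most $k$ strands crossing $\gamma$. This produces an increasing filtration $F_0 \subset F_2 \subset \cdots$ of $Q_{a,b,c,d}$.

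The crucial step is to identify $F_j / F_{j-2}$ with the at most one-dimensional space spanned by the candidate basis element $v_j$ of Figure~\ref{f6}. Representing a class in $F_j$ by a basis diagram with exactly $j$ strands crossing $\gamma$, I would insert $f_j$ on that bundle. By Proposition~\ref{1}, the difference $1_{TL_j} - f_j$ lies in the two-sided ideal generated by $e_1,\dots,e_{j-1}$; each such $e_i$, once placed on the $\gamma$-bundle, creates a local cup-cap pair on opposite sides of $\gamma$ and drops the crossing count by $2$. Hence the insertion of $f_j$ changes the element only by something in $F_{j-2}$. After the insertion, the half of the diagram on the $a+b$ side is a disc with $a+b+j$ boundary points carrying $f_a, f_b, f_j$, and thus lies in $T_{a,b,j}$ by definition; similarly the $c+d$ half lies in $T_{c,d,j}$. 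By Proposition~\ref{3}, each half is a scalar multiple of the generator in Figure~\ref{f5} when the relevant triple is admissible and vanishes otherwise; gluing the halves along the $f_j$ bundle produces $v_j$. So $F_j/F_{j-2}$ is one-dimensional, spanned by the class of $v_j$, precisely when both $(a,b,j)$ and $(c,d,j)$ are admissible.

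The proposition then follows by the usual filtration bookkeeping. For independence, any relation $\sum \lambda_j v_j = 0$ forces, via its top-$j_0$ term, $\lambda_{j_0}[v_{j_0}] = 0$ in $F_{j_0}/F_{j_0 - 2}$ and hence $\lambda_{j_0} = 0$, after which one descends on $j_0$. For spanning, starting from any element of $Q_{a,b,c,d}$, subtract a scalar multiple of $v_j$ for the top active $j$ to move into $F_{j-2}$, and iterate. The main obstacle is the freeness of the $f_j$-insertion modulo $F_{j-2}$; this rests on the absorbing property of Proposition~\ref{1} together with the observation that each $e_i$-factor appearing in $1_{TL_j}-f_j$ strictly reduces the $\gamma$-crossing count. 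Once this is in place, Proposition~\ref{3} delivers the rest at no further cost.
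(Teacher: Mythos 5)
The paper states Proposition \ref{5} without proof, as one of the standard facts summarized in Section \ref{prop} from \cite{L2}, \cite{KL}, \cite{MV}, so there is no in-paper argument to compare against; I will therefore assess your argument on its own terms. Your spanning half is sound and is essentially the classical one: filter $Q_{a,b,c,d}$ by the number of strands crossing the separating arc $\gamma$, insert $f_j$ at the cost of an error in $F_{j-2}$ (every term of $f_j-1$ contains some $e_i$, hence a turn-back across $\gamma$), and then use $\dim T_{a,b,j}\leq 1$ and $\dim T_{c,d,j}\leq 1$ from Proposition \ref{3} to collapse each half onto the generator of Figure \ref{f5}.

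The gap is in linear independence. Your filtration argument establishes only that $F_j/F_{j-2}$ is \emph{at most} one-dimensional, spanned by $[v_j]$; the sentence claiming it \emph{is} one-dimensional for admissible $j$ is exactly the assertion $v_j\notin F_{j-2}$, which is never proved, and your independence step ($\lambda_{j_0}[v_{j_0}]=0$ in $F_{j_0}/F_{j_0-2}$, hence $\lambda_{j_0}=0$) presupposes it. This is not cosmetic: when $A$ is a suitable root of unity the elements $v_j$ really do become linearly dependent --- that degeneracy is the whole point of the Gram determinant the paper computes --- so some input about the ground ring $\Lambda$ is unavoidable. The standard repair is the pairing of Proposition \ref{4}: one gets $\langle v_j,v_{j'}\rangle=\delta_{jj'}\,\Theta(a,b,j)\Theta(c,d,j)/\Delta_j$, while every element of $F_{j-2}$ pairs to zero against $v_j$ because a diagram with fewer than $j$ strands through $\gamma$ closes against $f_j$ with a turn-back, which $f_j$ annihilates. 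Since $\Delta_j$ and the admissible $\Theta$'s are units in $\Lambda$ (quantum integers become invertible after localizing at $\{A^n-1\}$), this yields $v_j\notin F_{j-2}$ and the orthogonality that makes your descending induction on $j_0$ run. With that paragraph added, your proof is complete.
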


\begin{figure}[h]
   \centering
   \includegraphics[width=1in,height=1in]{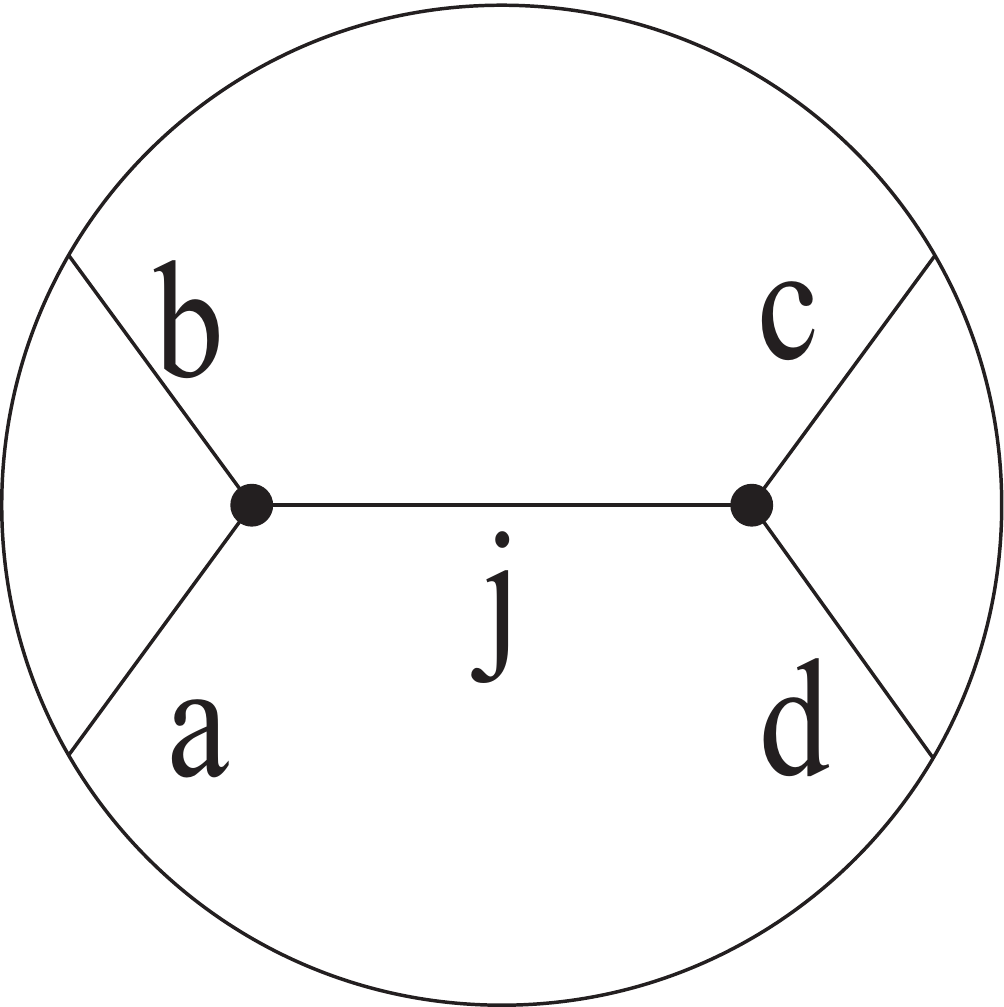}
   \caption{A basis element of $Q_{a,b,c,d}$}
   \label{f6}
\end{figure}

\begin{prop}\label{6}
\begin{equation}
\begin{minipage}{1in}\includegraphics[width=1in]{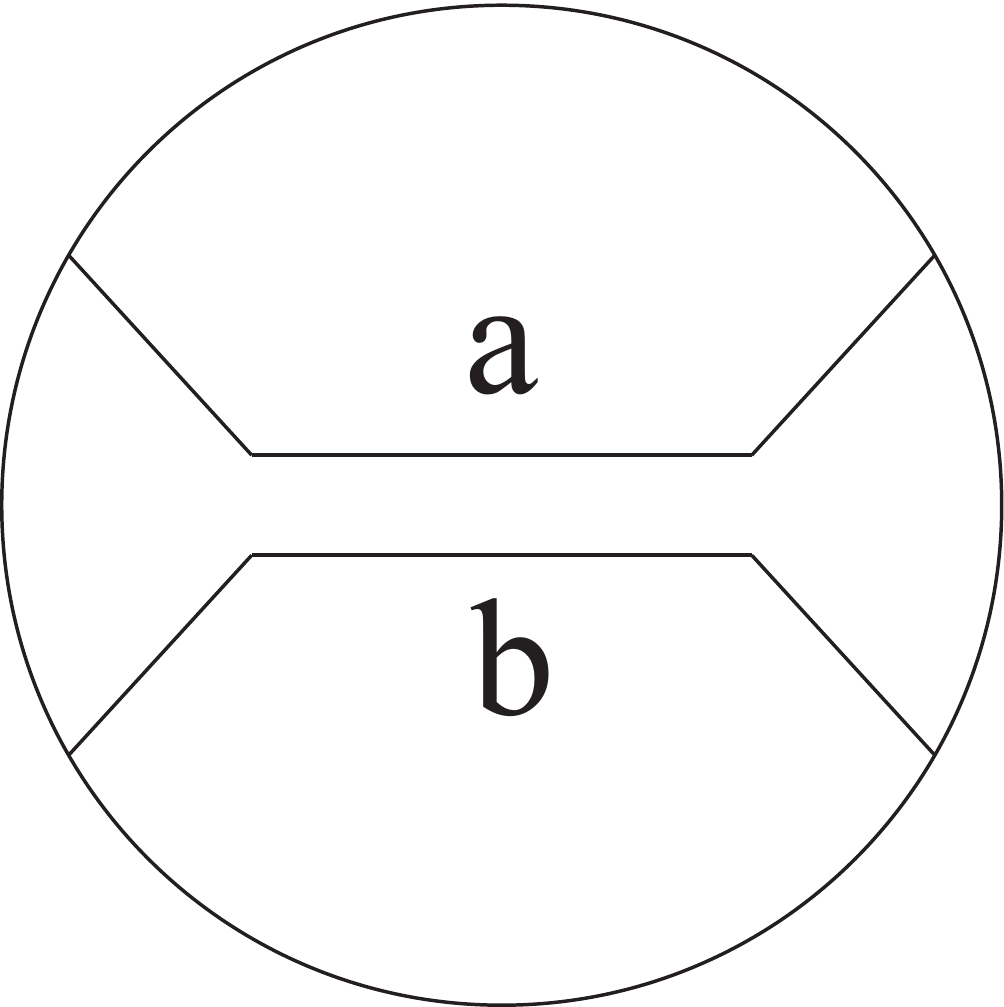}\end{minipage}
=\sum\frac{\Delta_j}{\Theta(a,b,j)}\ \begin{minipage}{1in}\includegraphics[width=1in]{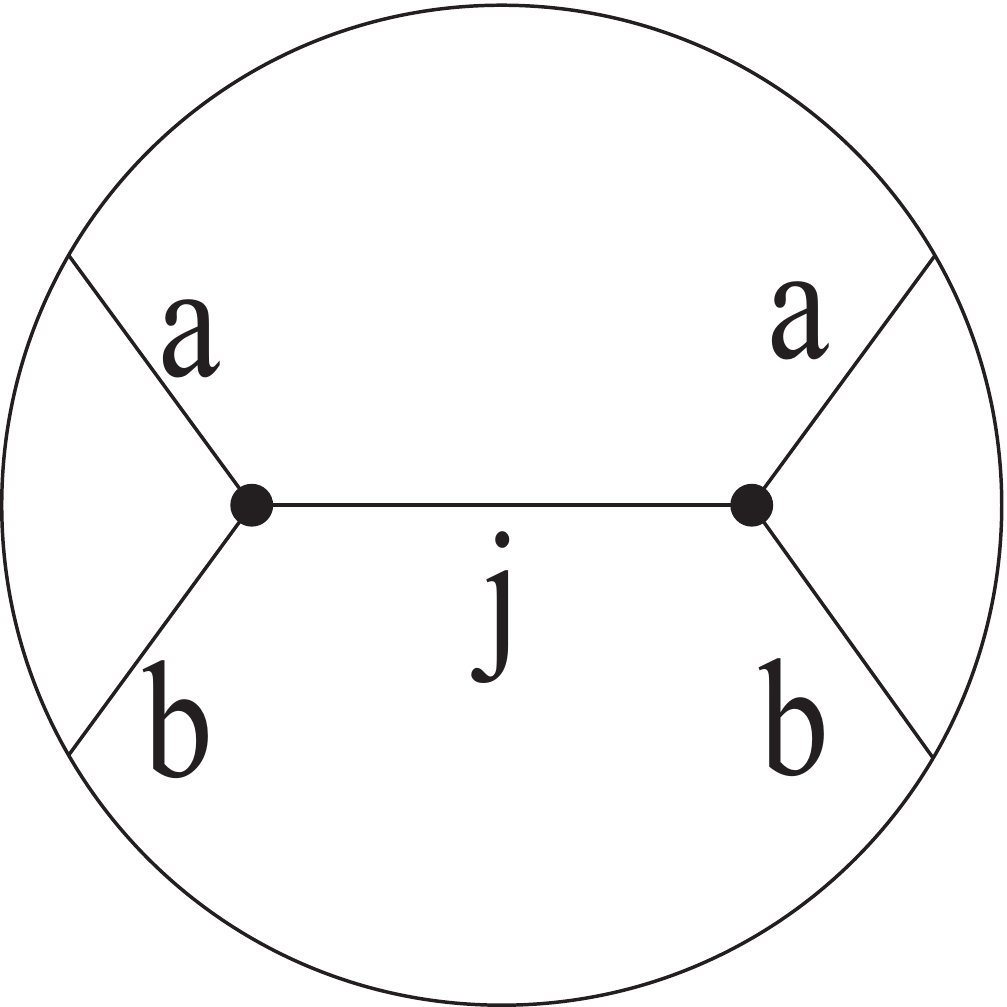}\end{minipage}
\notag
\end{equation}
where the summation runs over all $j$'s such that $(a,b,j)$ is admissible.
\end{prop}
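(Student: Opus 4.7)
The plan is to expand the left-hand side in the basis furnished by Proposition \ref{5} and then determine the coefficients by a ``dual basis'' extraction powered by Proposition \ref{4}.

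The LHS --- two parallel stacks of strands carrying the idempotents $f_a$ and $f_b$ --- is an element of the skein subspace $Q_{a,b,a,b}$ of the disc with $2(a+b)$ boundary points (read in the appropriate cyclic order). By Proposition \ref{5} this subspace has a basis $\{B_j\}$ indexed by those $j$ for which $(a,b,j)$ is admissible, so I can write
\[
\text{LHS} \;=\; \sum_j c_j\, B_j
\]
for some unknown coefficients $c_j \in \Lambda$.

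To isolate the coefficient $c_{j_0}$, I will stack the basis element $B_{j_0}$ on top of both sides. On the left-hand side this composition just gives $B_{j_0}$ back, because the $f_a$ and $f_b$ at the bottom of $B_{j_0}$ absorb the matching idempotents appearing on top of the LHS. On the right-hand side, the composite $B_j \circ B_{j_0}$ contains in its middle a local ``theta'' pattern --- two trivalent vertices joined by strands of colors $a$ and $b$, with free ends colored $j$ below and $j_0$ above --- which is exactly the configuration of Proposition \ref{4}. That proposition collapses the pattern to $\delta_{j,j_0}\,\Theta(a,b,j_0)/\Delta_{j_0}$ times a single $j_0$-strand (with its idempotent), so
\[
B_j \circ B_{j_0} \;=\; \delta_{j,j_0}\,\frac{\Theta(a,b,j_0)}{\Delta_{j_0}}\, B_{j_0}.
\]
Summing, the right-hand side becomes $c_{j_0}\,\Theta(a,b,j_0)\,\Delta_{j_0}^{-1}\, B_{j_0}$, and comparing with the left-hand side yields $c_{j_0} = \Delta_{j_0}/\Theta(a,b,j_0)$, which is exactly the claimed formula.

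The one delicate step is the identification of the middle theta-pattern inside $B_j \circ B_{j_0}$: one must match the roles of the colors correctly (in the notation of Proposition \ref{4} the two bigon strands play the role of $b$ and $c$, while the two free ends play the role of $a$ and $d$), and confirm that after Proposition \ref{4} is invoked the two surviving fragments --- the lower trivalent vertex of $B_j$ and the upper trivalent vertex of $B_{j_0}$, joined by the remaining $j_0$-strand --- genuinely reassemble into $B_{j_0}$ rather than some rescaled variant. Apart from this bookkeeping, the argument is a standard dual-basis computation.
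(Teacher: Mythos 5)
Your argument is correct, but note that the paper does not actually prove Proposition \ref{6}: Section \ref{prop} only collects standard facts about $TL_n$ from the literature (Lickorish, Masbaum--Vogel, Kauffman--Lins), and this fusion identity is quoted as one of them, so there is no in-paper proof to compare against. Your dual-basis extraction is the standard argument from those references, usually run by capping the top of both sides with a single trivalent vertex (the generator of $T_{a,b,j_0}$) rather than with the whole basis element $B_{j_0}$; the two versions are interchangeable, since after Proposition \ref{4} collapses the interior theta the leftover fragment is a nonzero generator of a one-dimensional space (Proposition \ref{3}), so the coefficient can be read off either way. The only points worth making explicit are (i) that the left-hand side genuinely lies in the four-point space (cyclically the boundary reads $a,b,b,a$, so the relevant space is $Q_{a,b,b,a}$, whose basis by Proposition \ref{5} is indexed by exactly the $j$ with $(a,b,j)$ admissible; the LHS is the image of the identity tangle under the map inserting the four boundary idempotents, so the expansion is legitimate), and (ii) that $B_{j_0}\neq 0$ and $\Theta(a,b,j_0)$ is invertible in $\Lambda$, so dividing by it is permitted. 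With those remarks, the bookkeeping you flag in your last paragraph works out exactly as you describe, using the symmetry $\Theta(j_0,a,b)=\Theta(a,b,j_0)$.
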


\begin{rem}
We denote $\frac{\Theta(a,b,1)}{\Delta_a}$ by $\Gamma(b,a)$.
It is easy to see that $\Gamma(b,a)=0$ if $\|a-b\|>1$. 
Then Proposition \ref{4} becomes
\begin{equation}
\begin{minipage}{1in}\includegraphics[width=1in]{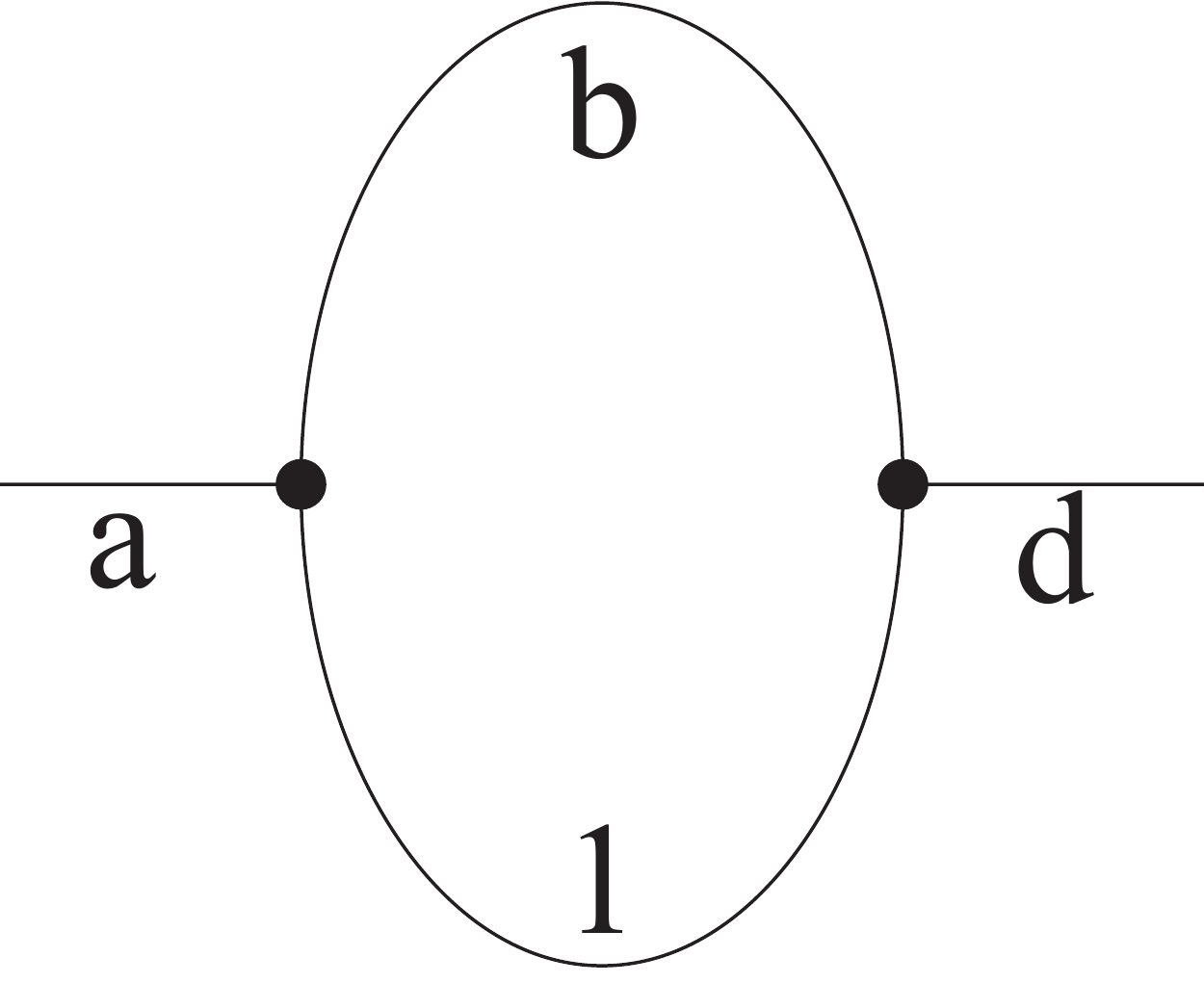}\end{minipage}
=\delta_{ad}\Gamma(b,a)\ 
\begin{minipage}{1in}\includegraphics[width=1in]{f_a.pdf}\end{minipage}\ .
\notag
\end{equation}
\end{rem}

\section{A Basis for the Temperley-Lieb Algebra from Jones-Wenzl Idempotents}
\label{JWidempotents}

Several bases of $TL_n$ have been given before by, for example, \cite{FGG} and \cite{GS}.
The idea of constructing the basis  in this paper is motivated by \cite{BHMV} and \cite{L3}. 
They constructed bases for modules associated to surfaces by a certain topological quantum field theory.
These bases were indexed by coloring of a trivalent graph in a handlebody.

\begin{de}\label{11}
Let $D_{a_1,...,a_{2n-1}}$ be the element of $TL_n$ in the Figure \ref{f3}
where $a_i$ satisfies:
	\begin{enumerate}
		\item $a_1=a_{2n-1}=1$;
		\item $a_i\in \mathbb{N}$ for all $i$;
		\item $\|a_i-a_{i-1}\|=1$ for all $i$.
	\end{enumerate}
Let $\mathfrak{A}_n$ be the collection of all n-tuples $(a_1,...,a_{2n-1})$ satisfying the above conditions, and let $\mathfrak{D}_n$ be the collection of all these $D$'s.
\end{de}

\begin{figure}[h]
\centering
\includegraphics[width=1in,height=1.2in]{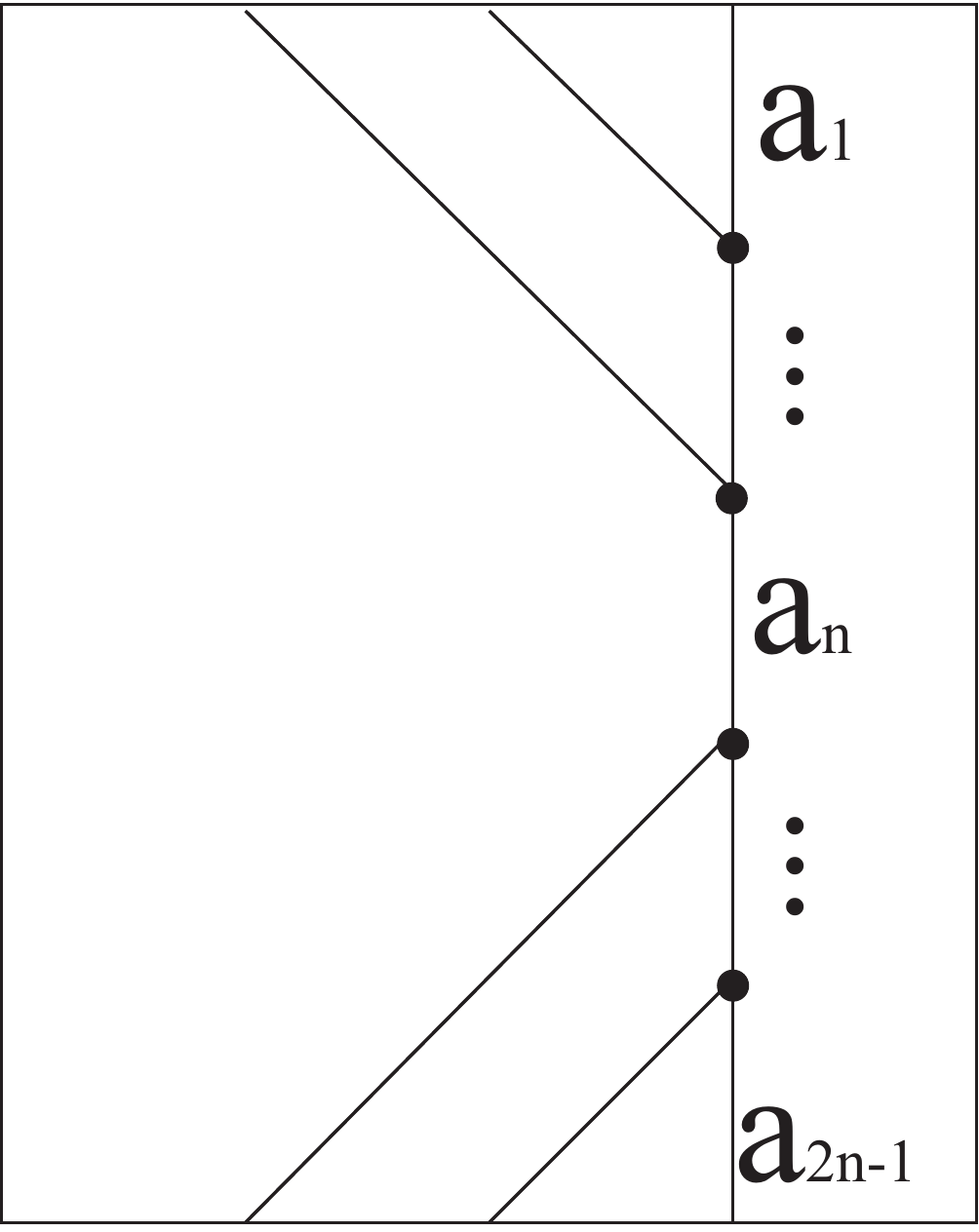}
\caption{Each triple point is admissible.}
\label{f3}
\end{figure}

\begin{lem}\label{7}
Suppose $(a_1,...,a_{2n-1})$ and $(b_1,...,b_{2n-1})$ satisfy all conditions above except $a_1=a_{2n-1}=1$. Then
\begin{eqnarray*}
&&\langle D_{a_1,...,a_{2n-1}},D_{b_1,...,b_{2n-1}}\rangle \nonumber \\
&=&\delta_{a_1b_1}...\delta_{a_{2n-1}b_{2n-1}}\Gamma(a_1,a_2)\Gamma(a_2,a_3)\dots\Gamma(a_{2n-2},a_{2n-1})\Delta_{a_{2n-1}}\ .
\end{eqnarray*}
\end{lem}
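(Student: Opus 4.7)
The plan is to compute $\langle D_{\vec a},D_{\vec b}\rangle$ by taking the closure of the stacked diagrams and then iteratively collapsing the resulting bubbles using the reformulated Proposition~\ref{4} (the $\Gamma$-version stated in the remark right before Section~\ref{JWidempotents}).

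First I would draw the closed diagram on $S^2$ that represents $\langle D_{\vec a},D_{\vec b}\rangle$. Since $D_{\vec a}$ and $D_{\vec b}$ have identical trivalent combinatorics (the same "comb" with $2n-1$ internal edges), gluing $D_{\vec a}$ to the mirror of $D_{\vec b}$ yields a chain of $2n-1$ bigonal regions: each internal edge labeled $a_i$ of $D_{\vec a}$ is paired with the edge labeled $b_i$ of $D_{\vec b}$, and between consecutive bigons sits a pair of trivalent vertices (one from each diagram) producing a bubble whose outer strand is a single Jones--Wenzl line (thanks to $|a_i-a_{i+1}|=1$, the "teeth" of the comb are single strands of $D^2$).

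Next I would peel these bubbles off one at a time, starting from the end opposite to index $a_{2n-1}$. The outermost bubble at step $i$ has internal labels $a_i,b_i$ and outer strand labeled $a_{i+1}$ (already reduced to a single $f_{a_{i+1}}$ strand by the previous step). By the $\Gamma$-form of Proposition~\ref{4} this bubble evaluates to $\delta_{a_ib_i}\,\Gamma(a_{i+1},a_i)$ times a single $f_{a_i}$ edge (with the convention $\Gamma(b,a)=\Theta(a,b,1)/\Delta_a$, which is exactly the theta-curve that appears). After $2n-2$ such reductions, the diagram is reduced to a single closed loop with the idempotent $f_{a_{2n-1}}$ inserted, whose value is $\Delta_{a_{2n-1}}$. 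Multiplying the factors picked up in order gives
\[
\delta_{a_1b_1}\cdots\delta_{a_{2n-1}b_{2n-1}}\,\Gamma(a_1,a_2)\Gamma(a_2,a_3)\cdots\Gamma(a_{2n-2},a_{2n-1})\,\Delta_{a_{2n-1}},
\]
as claimed. A clean way to package the iteration is by induction on $n$: the base case $n=1$ is just a single $f_1$-loop, and the inductive step is one bubble collapse plus the inductive hypothesis applied to the residual diagram of length $2(n-1)-1$.

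The main obstacle I anticipate is purely bookkeeping: making sure the reduction proceeds in an order where every bubble I collapse genuinely has a single $f_{a_{i+1}}$ strand as its outer edge (so that Proposition~\ref{4} literally applies), and that the resulting $\Gamma$-factors come out in the advertised order $\Gamma(a_1,a_2)\Gamma(a_2,a_3)\cdots\Gamma(a_{2n-2},a_{2n-1})$ rather than a reversed or shifted one. I would verify this by drawing the cases $n=2$ and $n=3$ explicitly before committing to the induction step; once the direction of peeling (from $a_1$ toward $a_{2n-1}$) is pinned down, the rest is a mechanical application of the proposition.
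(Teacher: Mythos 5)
Your proposal is correct and follows essentially the same route as the paper: the paper also proves this by induction, collapsing the bubbles of the closed-up ladder diagram via Proposition \ref{4} (in its $\Gamma$-form), the only cosmetic difference being that it peels one cell from each end per inductive step rather than marching from the $a_1$ end. The bookkeeping point you flag is real but harmless: peeling from the $a_1$ end, the $i$th collapse yields $\delta_{a_{i+1}b_{i+1}}\,\Gamma(a_i,a_{i+1})\,f_{a_{i+1}}$ (rather than $\Gamma(a_{i+1},a_i)\,f_{a_i}$ as written mid-argument), which is exactly what produces the advertised order of factors and the final $\Delta_{a_{2n-1}}$.
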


\begin{proof}
We prove the formula by induction.\\
When $n=2$, by direct computation,
\begin{eqnarray*}
&&\langle D_{a_1,a_2,a_3},D_{b_1,b_2,b_3}\rangle \nonumber \\
&=&\delta_{a_1b_1}\Gamma(a_1,a_2)\delta_{a_2b_2}\Gamma(a_2,a_3)\delta_{a_3b_3}\Delta_{a_3}.
\end{eqnarray*}
Thus the formula is true for $n=2$.
Now suppose the formula is true for $n=k-1$ and let $n=k$.
\begin{eqnarray*}
&&\langle D_{a_1,...,a_{2n-1}},D_{b_1,...,b_{2n-1}}\rangle \nonumber \\
&=&\delta_{a_1b_1}\Gamma(a_1,a_2)\delta_{a_{2n-1}b_{2n-1}}\Gamma(a_{2n-2},a_{2n-1})\langle D_{a_2,...,a_{2n-2}},D_{b_2,...,b_{2n-2}}\rangle \nonumber \\
&=&\delta_{a_1b_1}\Gamma(a_1,a_2)\delta_{a_{2n-1}b_{2n-1}}\Gamma(a_{2n-2},a_{2n-1})\times \nonumber \\ &&\delta_{a_2b_2}...\delta_{a_{2n-2}b_{2n-2}}\Gamma(a_2,a_3)\Gamma(a_3,a_4)\Gamma(a_{2n-3},a_{2n-2})\Delta_{a_{2n-2}} \nonumber \\
&=&\delta_{a_1b_1}...\delta_{a_{2n-1}b_{2n-1}}\Gamma(a_1,a_2)\Gamma(a_2,a_3)\dots\Gamma(a_{2n-2},a_{2n-1})\Delta_{a_{2n-1}}.
\end{eqnarray*}
Thus the formula holds for $n=k$.
Hence, by induction, the formula holds.
\end{proof}

\begin{lem}\label{8}
The elements of $\{D_{a_1,...,a_{2n-1}}\}$ are orthogonal in $TL_n$, and so are linearly independent.
\end{lem}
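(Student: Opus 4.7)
The proof proposal is to reduce everything to Lemma \ref{7}, which already computes the bilinear form on any pair $(D_{a_1,\dots,a_{2n-1}}, D_{b_1,\dots,b_{2n-1}})$ explicitly, and then to observe two things: (i) the formula contains the product of Kronecker deltas $\delta_{a_1b_1}\cdots\delta_{a_{2n-1}b_{2n-1}}$, so distinct index tuples in $\mathfrak{A}_n$ automatically yield a zero pairing, and (ii) for equal index tuples the formula is a product of $\Gamma$'s and a single $\Delta_{a_{2n-1}}$, each factor of which I will argue is a nonzero element of $\Lambda$.

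More concretely, I would proceed as follows. First, let $D_{a}, D_{b} \in \mathfrak{D}_n$ with $a\neq b$ in $\mathfrak{A}_n$. Lemma \ref{7} (applied in the special case $a_1 = a_{2n-1} = b_1 = b_{2n-1} = 1$, which is forced by Definition \ref{11}) gives $G_n(D_a,D_b)=\langle D_a,D_b\rangle = 0$ because at least one $\delta_{a_ib_i}$ vanishes. This establishes orthogonality with respect to $G_n$. Second, I would compute the self-pairing
\[
G_n(D_a,D_a)=\Gamma(a_1,a_2)\,\Gamma(a_2,a_3)\cdots\Gamma(a_{2n-2},a_{2n-1})\,\Delta_{a_{2n-1}}=\Delta_1\prod_{i=1}^{2n-2}\Gamma(a_i,a_{i+1}),
\]
using $a_{2n-1}=1$. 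For each adjacent pair we have $\|a_i-a_{i+1}\|=1$, so $(a_i,a_{i+1},1)$ is admissible and $\Gamma(a_i,a_{i+1})=\Theta(a_{i+1},a_i,1)/\Delta_{a_{i+1}}$ is defined; the explicit formulas for $\Theta$ and $\Delta_k$ recalled in Section \ref{prop} are nonzero in $\Lambda$ since $\Lambda$ is obtained from $\mathbb{Z}[A,A^{-1}]$ by inverting exactly the cyclotomic-type denominators that could otherwise kill them, and in particular $\Delta_1 = -A^{-2}-A^2 = \delta \neq 0$ in $\Lambda$.

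Finally, linear independence follows formally: suppose $\sum_a \lambda_a D_a = 0$ in $TL_n$ with coefficients $\lambda_a \in \Lambda$. Applying $G_n(\,\cdot\,,D_b)$ to both sides and using orthogonality collapses the sum to $\lambda_b\,G_n(D_b,D_b) = 0$. Since $\Lambda$ is an integral domain (being a localization of $\mathbb{Z}[A,A^{-1}]$) and $G_n(D_b,D_b)\neq 0$, we conclude $\lambda_b = 0$ for every $b$.

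The only nontrivial step is the nonvanishing of each $\Gamma(a_i,a_{i+1})$ in $\Lambda$; this is a routine check against the closed-form expression for $\Theta$ on admissible triples, and is the only place where the specific structure of the localization ring $\Lambda$ enters. Everything else is bookkeeping that Lemma \ref{7} has already done.
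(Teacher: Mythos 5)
Your proposal is correct and follows essentially the same route as the paper, which simply cites Lemma \ref{7}: the Kronecker deltas give orthogonality, and the nonvanishing of the self-pairings (each $\Gamma$ and $\Delta$ factor being invertible or at least nonzero in the domain $\Lambda$) gives linear independence. You have merely spelled out the details the paper leaves implicit.
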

\begin{proof}
This follows from Lemma \ref{7}.
\end{proof}

Now, we are going to prove that the elements of $\mathfrak{D}_n$ generate $TL_n$. 
This can be proved using induction and Proposition \ref{1} and \ref{3}. 
For variety, we give an alternative proof.

\begin{lem}\label{9}
Each element of $\{1,e_1,...,e_{n-1}\}$ can be expressed as a linear combination of $D$'s in $\mathfrak{D}_n$.
\end{lem}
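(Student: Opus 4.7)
The plan is to expand each generator of $TL_n$ directly using the fusion formula of Proposition~\ref{6}, together with Proposition~\ref{4}.

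For the identity $1 \in TL_n$: since $f_1$ is just a single strand, the identity equals the horizontal juxtaposition $f_1 \otimes f_1 \otimes \cdots \otimes f_1$ of $n$ copies. I then apply Proposition~\ref{6} repeatedly to fuse adjacent Jones--Wenzl idempotents, following the fusion order dictated by the tree structure of the diagrams in Figure~\ref{f3}. Each fusion step replaces an $f_a \otimes f_b$ with a sum $\sum_j \frac{\Delta_j}{\Theta(a,b,j)}$ over a new label $j$ with $(a,b,j)$ admissible. The crucial observation is that whenever $b=1$, admissibility of $(a,1,j)$ forces $j = a \pm 1$, which is exactly the step condition $\|a_i - a_{i-1}\| = 1$ in Definition~\ref{11}. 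The outermost edges are forced to carry the label $1$ since the boundary strands are single arcs, yielding $a_1 = a_{2n-1} = 1$. After $n-1$ fusions the summands are precisely the $D_{a_1,\ldots,a_{2n-1}}$ with $(a_1,\ldots,a_{2n-1}) \in \mathfrak{A}_n$, with completely explicit coefficients.

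For each $e_i$: the diagram of $e_i$ agrees with that of the identity except that the $i$th and $(i+1)$st strands are replaced by a cup-cap pair. I run the identical fusion procedure. At the one fusion step involving these two strands, Proposition~\ref{4} applies and the cup-cap forces the corresponding internal edge to carry label $0$ via the Kronecker delta $\delta_{ad}$. All other fusion steps contribute the same sums as in the expansion of $1$. Hence $e_i$ is a linear combination of those $D \in \mathfrak{D}_n$ whose label at the prescribed edge is $0$.

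The main technical obstacle is organizational rather than conceptual: one must pick a fusion order for the $n$ strands that exactly reproduces the trivalent graph of Figure~\ref{f3}, and then verify that the admissibility conditions generated by successive applications of Proposition~\ref{6} coincide with the conditions $a_1 = a_{2n-1} = 1$, $a_i \in \mathbb{N}$, and $\|a_i - a_{i-1}\| = 1$. Once this correspondence is pinned down, both expansions reduce to a direct application of Propositions~\ref{4} and \ref{6}.
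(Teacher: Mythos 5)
Your overall strategy---iterated application of the fusion formula of Proposition \ref{6}---is the same engine the paper uses; the paper simply packages the iteration as an induction on $n$ that peels off one strand at a time. For the identity your expansion is essentially correct, though note that only the \emph{symmetric} tuples (with $a_{n+i}=a_{n-i}$) actually occur, since each split-back in Proposition \ref{6} returns exactly the label that was just merged; this overstatement is harmless for the lemma. The genuine gap is in your treatment of $e_i$. The claim that ``the cup-cap forces the corresponding internal edge to carry label $0$ via the Kronecker delta $\delta_{ad}$'' misreads Proposition \ref{4}: that delta forces the two edges \emph{flanking} a bubble to be equal; it never forces a label to vanish. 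For $2\le i\le n-1$ the cup and cap of $e_i$ sit against two \emph{adjacent legs} of the caterpillar, producing a bubble whose collapse imposes $a_{i-1}=a_{i+1}$ (and the mirror condition on the other half of the spine), not $a_i=0$. Your stated conclusion is in fact false: already in $TL_3$ one computes $\langle e_2, D_{1,2,1,2,1}\rangle=\Gamma(2,1)^2\Delta_1=\Delta_2^2/\Delta_1\neq 0$, so $e_2$ is supported on a basis element none of whose labels is $0$. Only $e_1$ behaves as you describe, because only there does the cap close off the extreme leg of the caterpillar, creating a ``lollipop'' at the end vertex that kills every term with $a_2\neq 0$.

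There is also an unaddressed structural point feeding this error: once strands $i$ and $i+1$ are replaced by a cup-cap, no fusion order of the remaining strands reproduces the trivalent graph of Figure \ref{f3} directly; the cup-cap contributes a branch in the wrong position, and rewriting the resulting tree in the basis $\mathfrak{D}_n$ requires the bubble collapse of Proposition \ref{4}, which is precisely where the constraint $a_{i-1}=a_{i+1}$ (rather than a vanishing label) appears. The paper sidesteps all of this: for $x\in\{1,e_2,\dots,e_{n-1}\}$ the \emph{first} strand is a genuine through-strand, so one deletes it, applies the inductive hypothesis in $TL_{n-1}$, and fuses the strand back with a single use of Proposition \ref{6}; the one generator for which this fails, $e_1$, is handled separately with two applications of Proposition \ref{6}. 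Your argument can be repaired along those lines, but as written the case of $e_i$ for $i\ge 2$ does not go through.
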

\begin{proof}
We prove the lemma by induction and Proposition \ref{6}.\\
It is easy to see that the lemma is true for $\mathfrak{D}_1,\mathfrak{D}_2$.\\
Suppose the lemma is true for $\mathfrak{D}_{n-1}$, 
we need show that it is true for $\mathfrak{D}_n$.\\
For $x\in\{1,e_2,e_3,...,e_{n-1}\}$,
we can obtain the result as in Figure \ref{fusion1}.
The proof for $e_1$ is similar except at the second equality, 
we use Proposition \ref{6} for each turn-back, see Figure \ref{fusion2}.
\end{proof}
\begin{figure}[h]
\includegraphics[width=1in,height=1.2in]{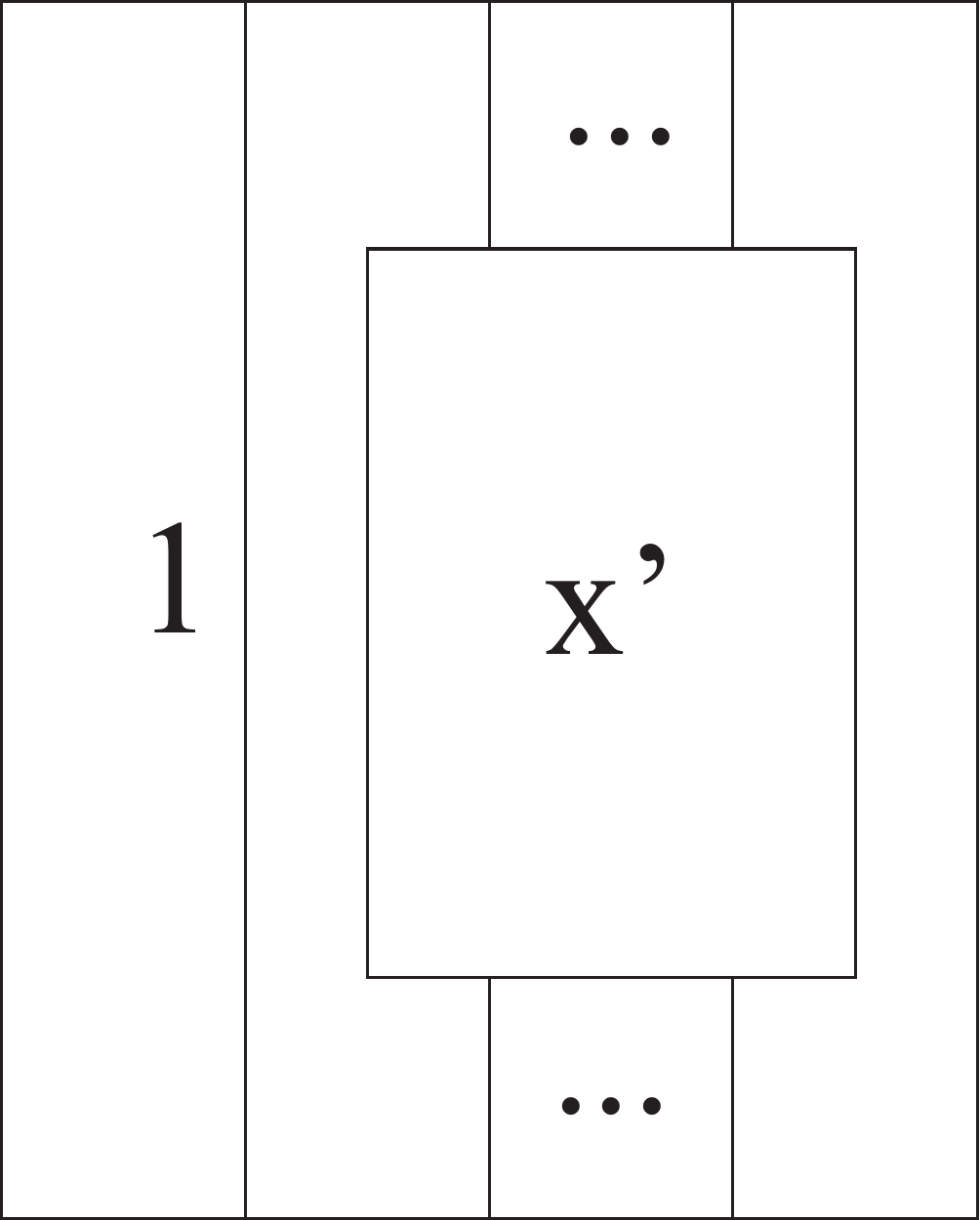}
\includegraphics[width=0.15in,height=1.2in]{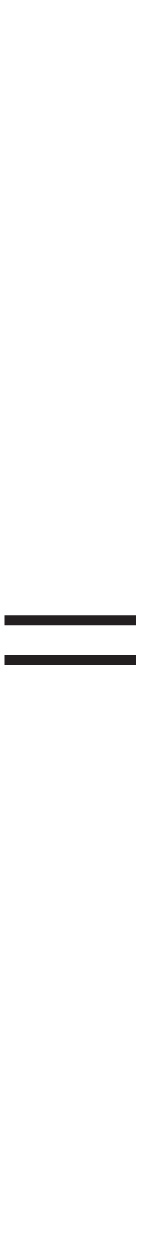}
\includegraphics[width=0.15in,height=1.2in]{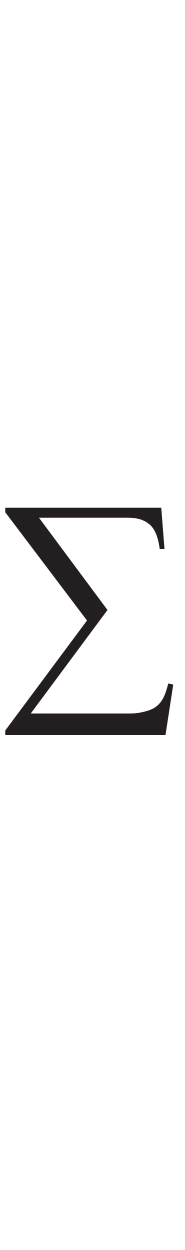}
\includegraphics[width=0.5in,height=1.2in]{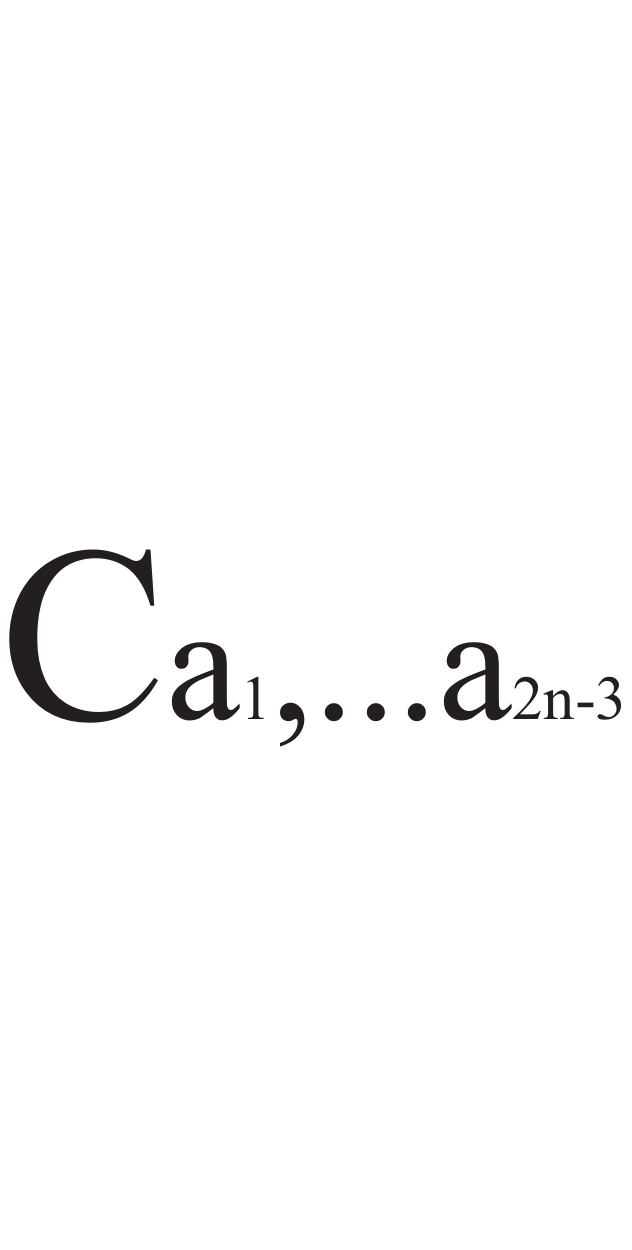}
\includegraphics[width=1in,height=1.2in]{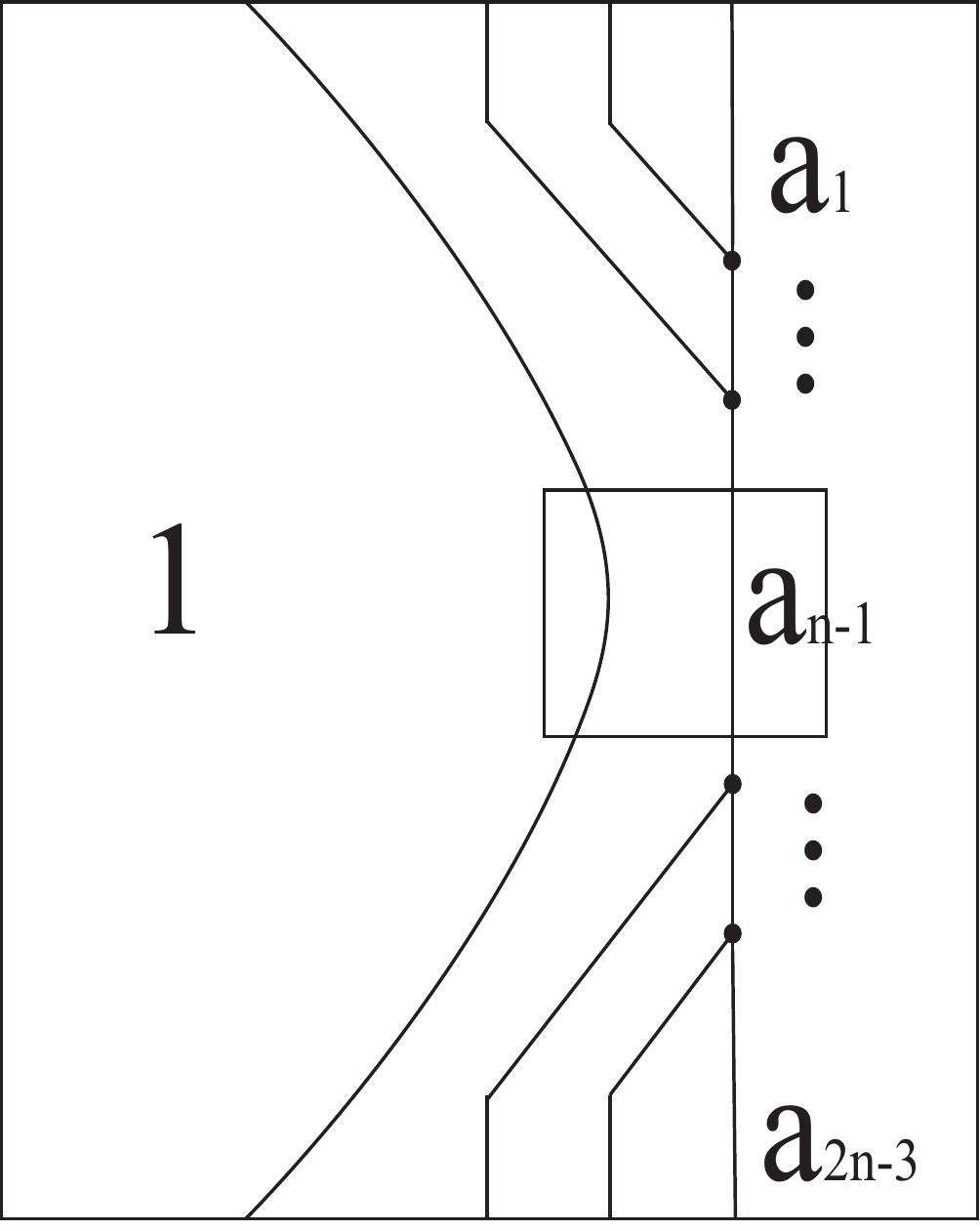}
\includegraphics[width=0.15in,height=1.2in]{equal.pdf}
\includegraphics[width=0.15in,height=1.2in]{sum.pdf}
\includegraphics[width=0.5in,height=1.2in]{coeff1.pdf}
\includegraphics[width=0.15in,height=1.2in]{sum.pdf}
\includegraphics[width=0.15in,height=1.2in]{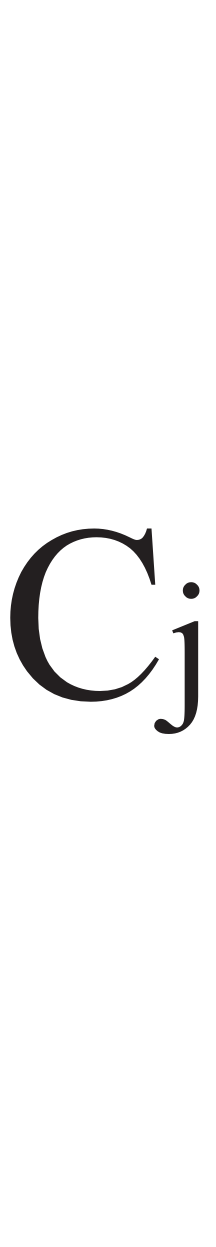}
\includegraphics[width=1in,height=1.2in]{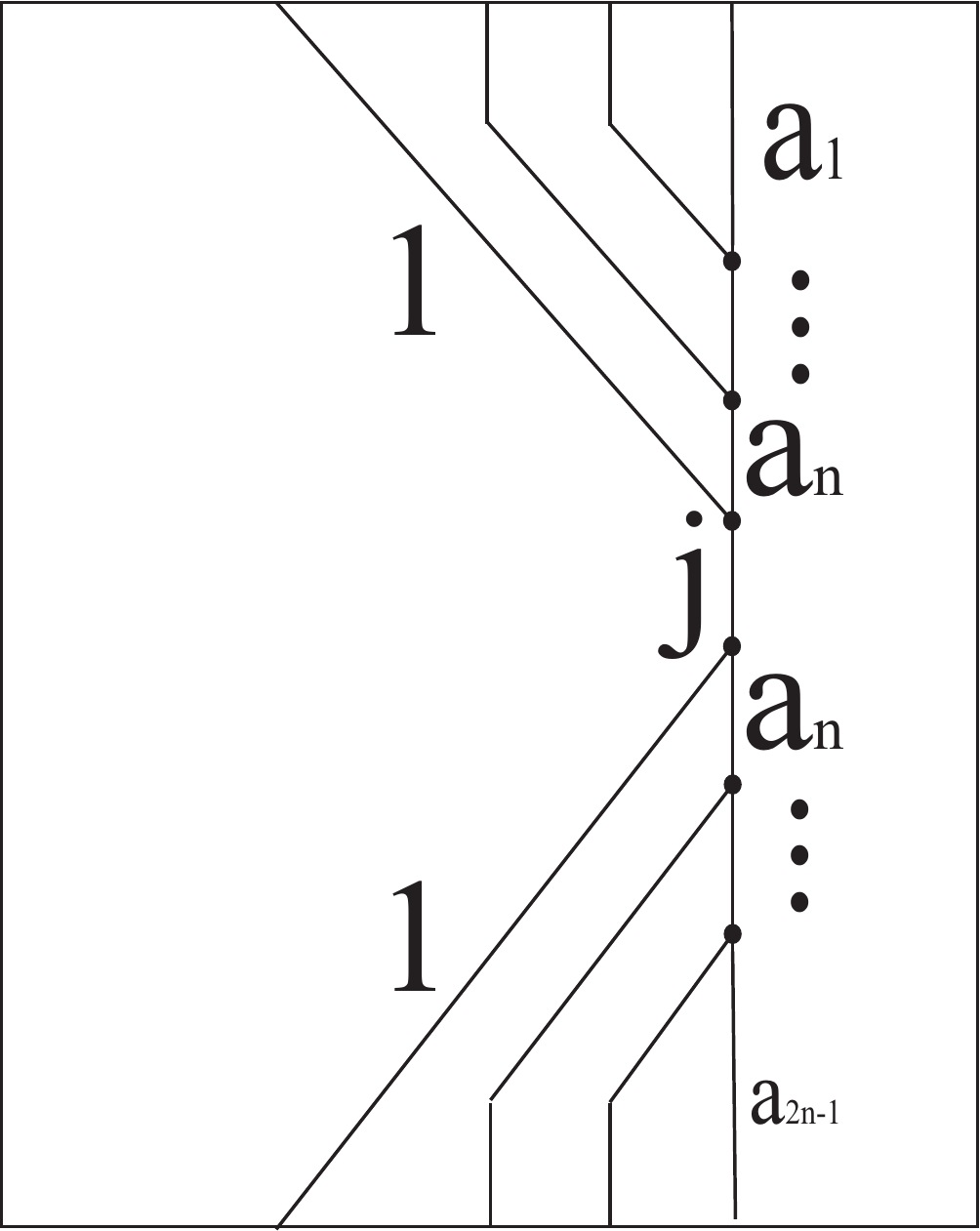}
\caption{$x's$ is a generator for $TL_{n-1}$ by deleting the first arc in $x$. 
The first equality is from induction step. 
The second equality is from Proposition \ref{6}.}
\label{fusion1}
\end{figure}

\begin{figure}[h]
\includegraphics[width=1in,height=1.2in]{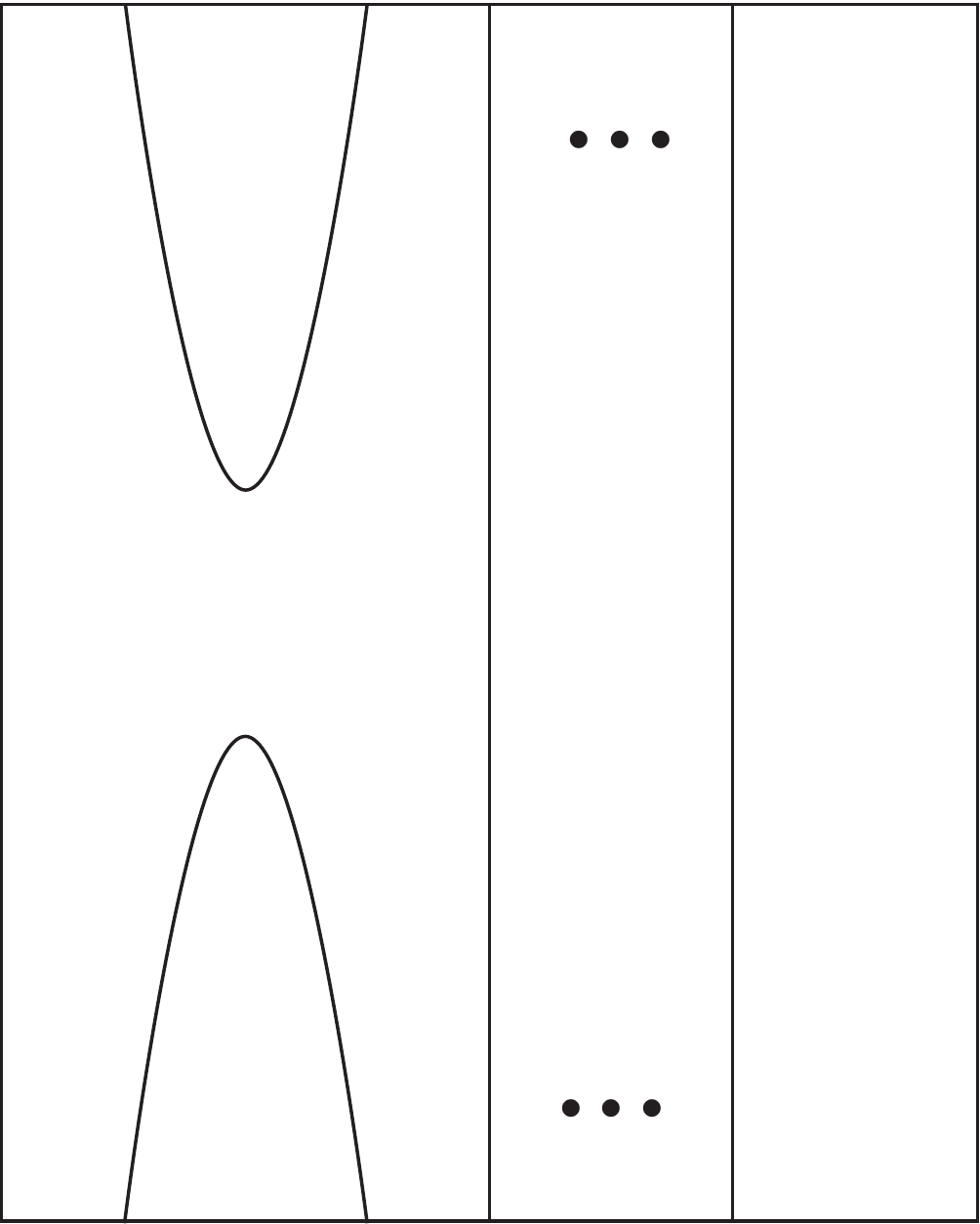}
\includegraphics[width=0.15in,height=1.2in]{equal.pdf}
\includegraphics[width=0.15in,height=1.2in]{sum.pdf}
\includegraphics[width=0.5in,height=1.2in]{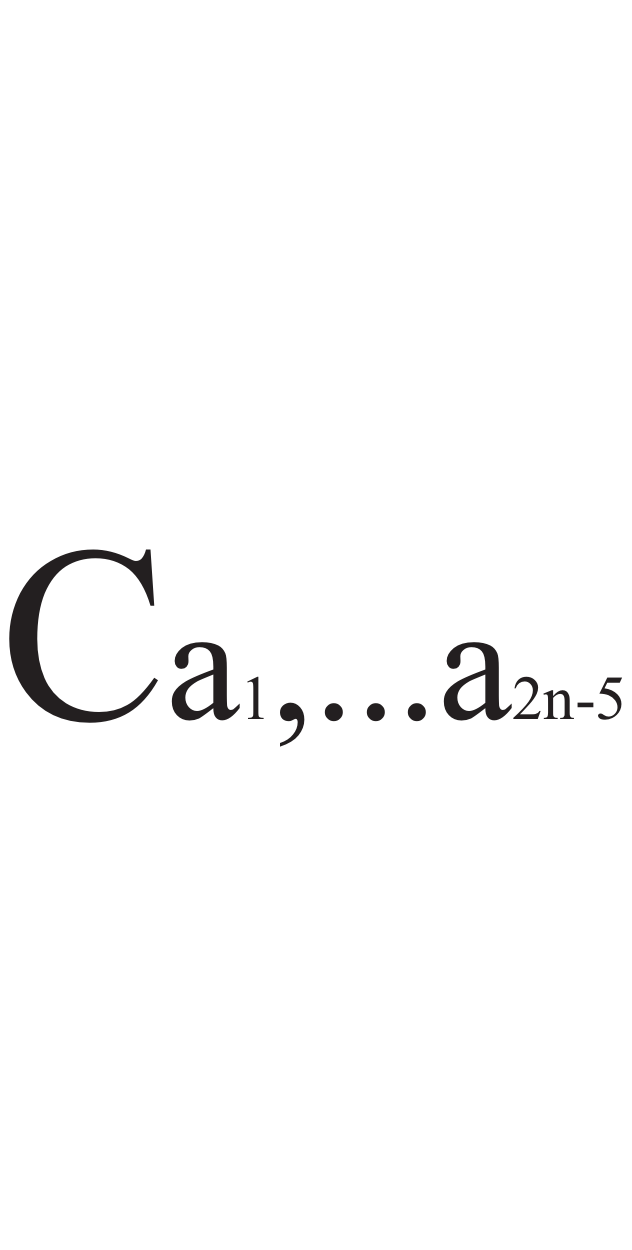}
\includegraphics[width=1in,height=1.2in]{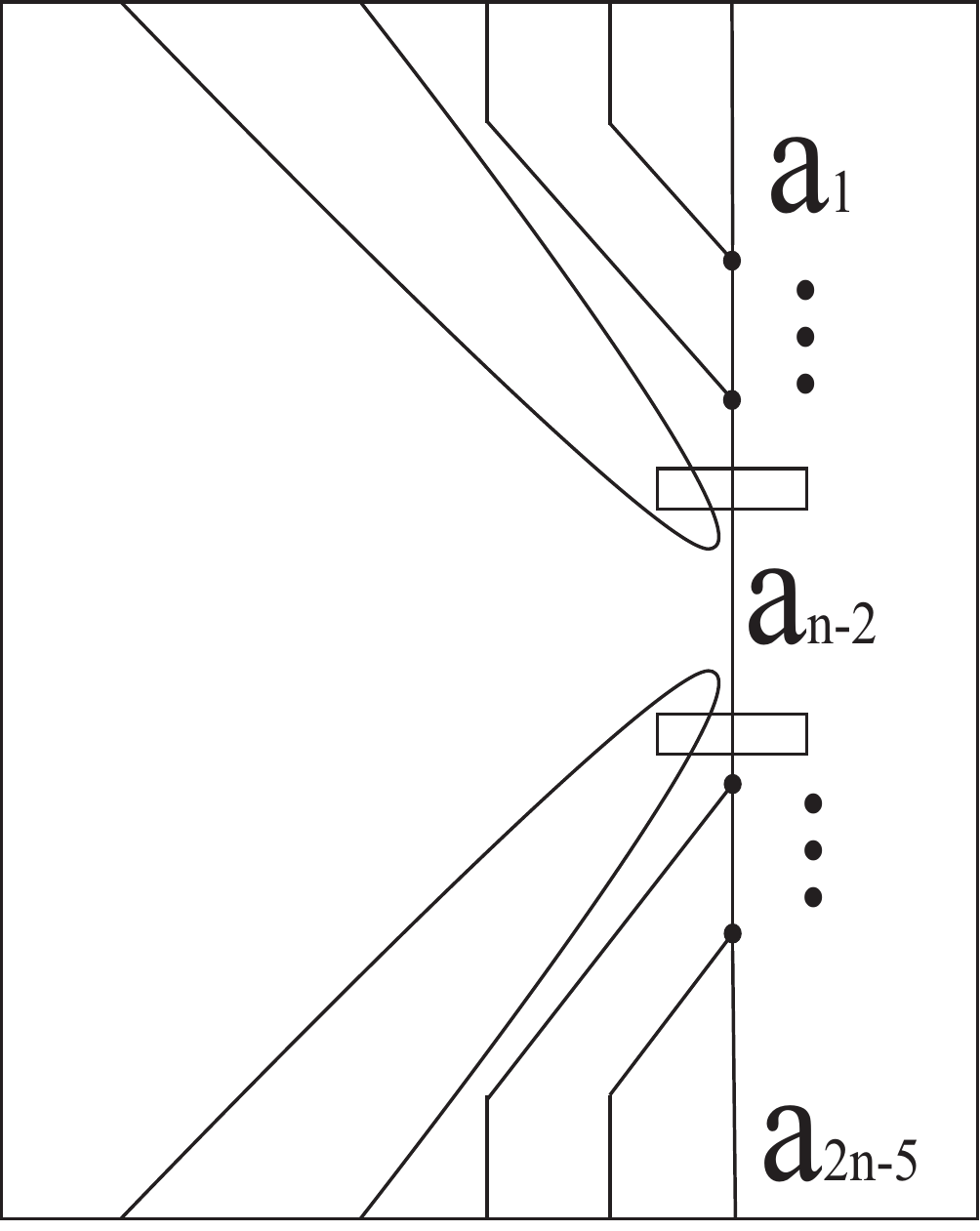}
\includegraphics[width=0.15in,height=1.2in]{equal.pdf}
\includegraphics[width=0.15in,height=1.2in]{sum.pdf}
\includegraphics[width=0.5in,height=1.2in]{coeff2.pdf}
\includegraphics[width=0.15in,height=1.2in]{sum.pdf}
\includegraphics[width=0.2in,height=1.2in]{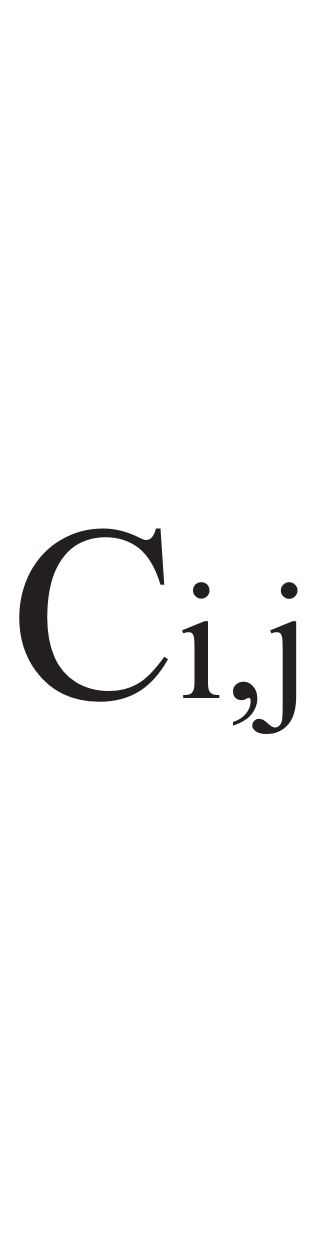}
\includegraphics[width=1in,height=1.2in]{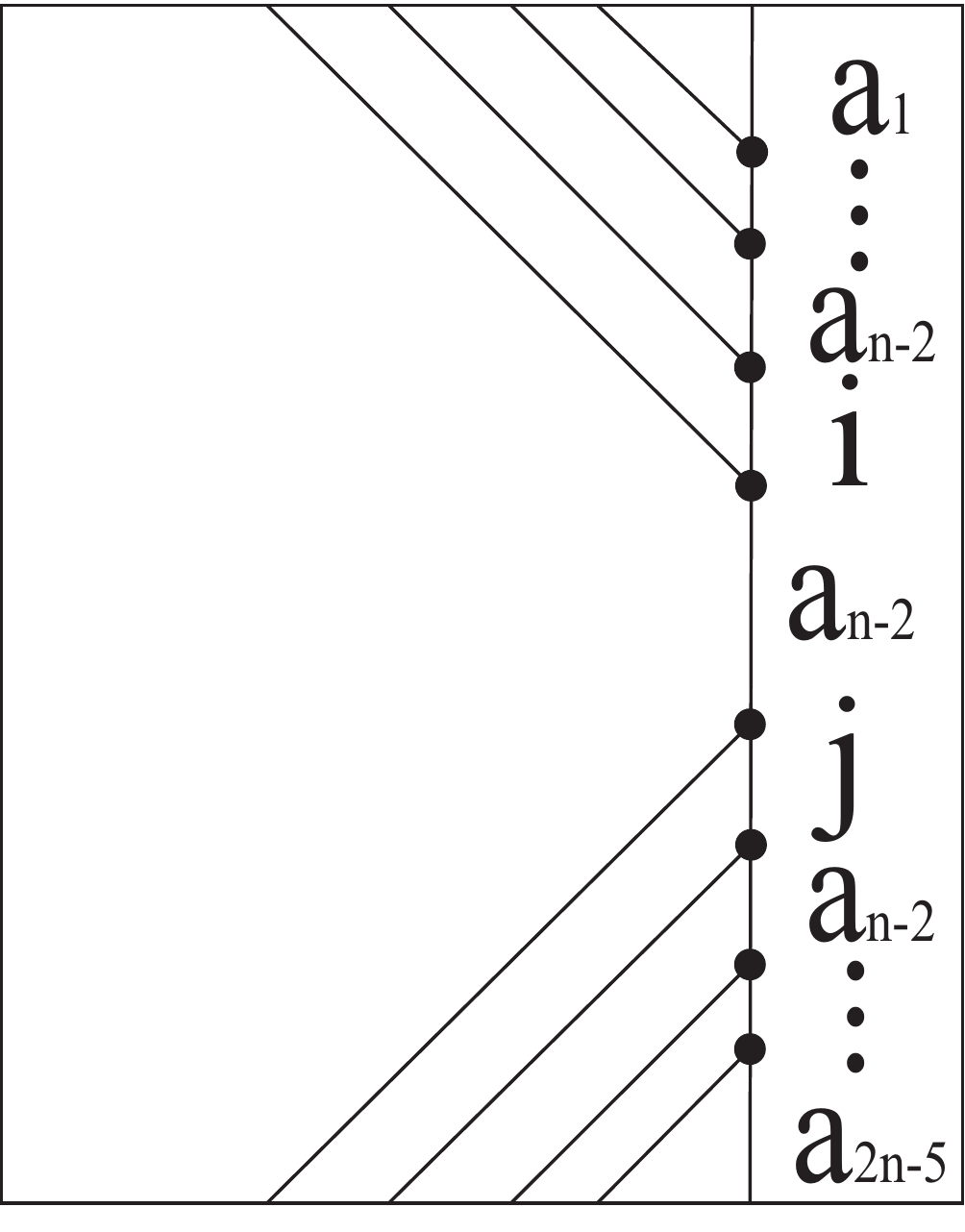}
\caption{The proof is the same as in Figure \ref{fusion1} except we need use Proposition \ref{6} twice.}
\label{fusion2}
\end{figure}

\begin{lem}
$\mathfrak{D}_n$ is a basis of $TL_n$.
\end{lem}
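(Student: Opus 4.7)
The plan is to prove the lemma by a dimension count, combining the linear independence already established in Lemma \ref{8} with a combinatorial count of $|\mathfrak{D}_n|$. Since $\mathfrak{D}_n$ is indexed bijectively by $\mathfrak{A}_n$, and the elements of $\mathfrak{D}_n$ are pairwise orthogonal (hence pairwise distinct) in $TL_n$, we have $|\mathfrak{D}_n|=|\mathfrak{A}_n|$. So the entire question reduces to showing $|\mathfrak{A}_n| = c_n = \frac{1}{n+1}\binom{2n}{n}$, which is the classical dimension of $TL_n$ as a $\Lambda$-module.

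To compute $|\mathfrak{A}_n|$, I would set up a bijection between $\mathfrak{A}_n$ and the set of Dyck paths of length $2n$ (lattice walks from $0$ to $0$ with steps $\pm 1$ that remain non-negative). Given a sequence $(a_1,\ldots,a_{2n-1})\in\mathfrak{A}_n$, extend it to
\[
(a_0, a_1, a_2, \ldots, a_{2n-2}, a_{2n-1}, a_{2n}) = (0, 1, a_2, \ldots, a_{2n-2}, 1, 0).
\]
The conditions $a_i\in\mathbb{N}$, $\|a_i-a_{i-1}\|=1$, $a_1=a_{2n-1}=1$ guarantee that this extension is a lattice walk of length $2n$ from $0$ to $0$ with steps $\pm 1$ staying non-negative, i.e.\ a Dyck path. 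Conversely, any Dyck path of length $2n$ is forced to begin with an up-step (to avoid going below $0$) and end with a down-step, so deleting the first and last step produces an element of $\mathfrak{A}_n$. These two constructions are mutual inverses, giving $|\mathfrak{A}_n|=c_n$.

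With the count in hand the conclusion is immediate. Since $TL_n$ has the basis $\mathfrak{B}_n$ of non-crossing pairings, $\dim_\Lambda TL_n = c_n$. By Lemma \ref{8}, $\mathfrak{D}_n\subset TL_n$ is linearly independent, and by the bijection above $|\mathfrak{D}_n|=c_n=\dim_\Lambda TL_n$. A linearly independent set of maximal cardinality is a basis, which proves the lemma. (Alternatively, one can combine Lemma \ref{9} with induction on $n$ and the fusion identity Proposition \ref{6} to push spanning from the algebra generators to arbitrary words in them, but the dimension argument is cleaner.)

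The only real step requiring thought is the combinatorial bijection, and even there the obstacle is minor: one just needs to verify that allowing $a_i=0$ in Definition \ref{11} is correctly matched by the Dyck-path condition $a_i\geq 0$ after the boundary extension. Once this is checked, all other facts—linear independence, the Catalan-number identity for $\dim TL_n$—are already in place.
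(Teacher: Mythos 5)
Your Dyck-path count of $|\mathfrak{D}_n|$ is correct (it is exactly the bijection the paper records in Section \ref{latticepath}), and Lemma \ref{8} does give linear independence. The gap is in the final step: ``a linearly independent set of maximal cardinality is a basis'' is a fact about vector spaces over a field, whereas $TL_n$ is a module over $\Lambda$, which is $\mathbb{Z}[A,A^{-1}]$ with only the elements $A^n-1$ inverted --- not a field. Over such a ring, a linearly independent subset of a free module whose cardinality equals the rank need not span: $\{2\}\subset\mathbb{Z}$ is the standard counterexample. Your argument therefore shows that $\mathfrak{D}_n$ is a basis of $TL_n\otimes_\Lambda K$ for $K$ the fraction field of $\Lambda$, i.e.\ that it is a maximal linearly independent set, but it does not show that every element of $TL_n$ is a $\Lambda$-linear combination of the $D$'s, which is what the lemma asserts.

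What you relegate to a parenthetical is in fact the necessary content, and it is what the paper does: by Lemma \ref{9} each of $1,e_1,\dots,e_{n-1}$ is a $\Lambda$-linear combination of elements of $\mathfrak{D}_n$ (the coefficients arising from Proposition \ref{6} lie in $\Lambda$ exactly because the $\Delta_k$ are units there --- this is the point of the localization), every element of the known basis $\mathfrak{B}_n$ is a product of the $e_i$'s, and Proposition \ref{4} reduces a product of $D$'s to a $\Lambda$-linear combination of $D$'s; hence $\mathfrak{B}_n$, and with it all of $TL_n$, lies in the $\Lambda$-span of $\mathfrak{D}_n$. To repair your proof you must either carry out this spanning argument or explicitly pass to the fraction field; the latter changes the statement of the lemma, although it would still suffice for the Gram-determinant computation, since Proposition \ref{14} shows the change-of-basis matrix is unitriangular and so has determinant $1$.
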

\begin{proof}
Since each element in $\mathfrak{B}_n$ can be written as a product of $e_i$'s,
we can write each as a sum of products of elements in $\mathfrak{D}_n$ by Lemma \ref{9}.
Moreover, a product of elements in $\mathfrak{D}_n$ can be written as a linear combination of elements in $\mathfrak{D}_n$ by Proposition \ref{4}.
So we can write elements in $\mathfrak{B}_n$ as sums of elements in $\mathfrak{D}_n$.
As $\mathfrak{B}_n$ is a basis for $TL_n$, the lemma holds.
\end{proof}

\section{Relation between $\mathfrak{B}_n$ and $\mathfrak{D}_n$}
\label{relation}
In this section, we will give a new system to denote the basis $\mathfrak{B}_n$.
We draw a diagram similar to elements in $\mathfrak{D}_n$ as in Figure \ref{f9},
except we do not put idempotents on strings and we put an empty circle at each black triple point.
If $a_i=a_{i+1}+1$, we put Figure \ref{f10} in the corresponding circle.
If $a_i=a_{i+1}-1$, then we put Figure \ref{f11} in the corresponding circle.
After filling all circles, we get a non-crossing diagram in $TL_n$ for each sequence $(a_1,...,a_{2n-1})$,
which satisfies the conditions in Definition \ref{11}.
Those elements belong to $\mathfrak{B}_n$.
Now, we give a total order on the set $\mathfrak{A}_n$ as follows:
$(a_1,...,a_{2n-1})<(b_1,...,b_{2n-1})$ if there is a $j$ such that $a_i=b_i$ for all $i<j$ and $a_j<b_j$.
This order on $\mathfrak{A}_n$ induces an order on $\{B_{a_1,...,a_{2n-1}}\}$ and $\mathfrak{D}_n$ naturally.
In this order, we will show that the representing matrix of $\{B_{a_1,...,a_{2n-1}}\}$ with respect to basis $\mathfrak{D}_n$ is upper triangular having $1$'s on the diagonal.

\begin{figure}[h]
\centering
\includegraphics[width=1in,height=1.2in]{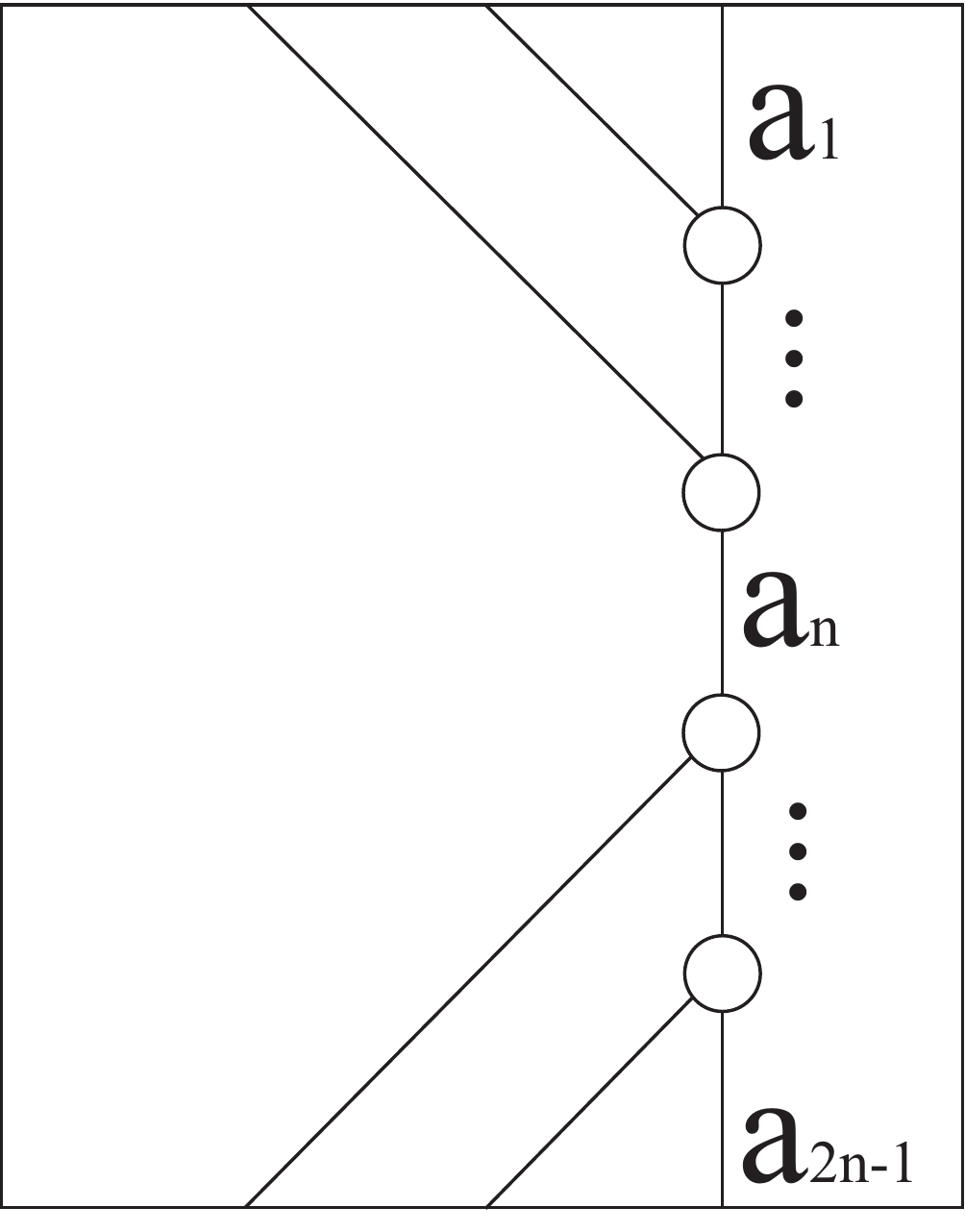}
\caption{An example of $B_{a_1,...,a_{2n-1}}$.}
\label{f9}
\end{figure}

\begin{figure}[h]
\begin{minipage}[h]{0.5\columnwidth}
\centering
\includegraphics[width=1in,height=1in]{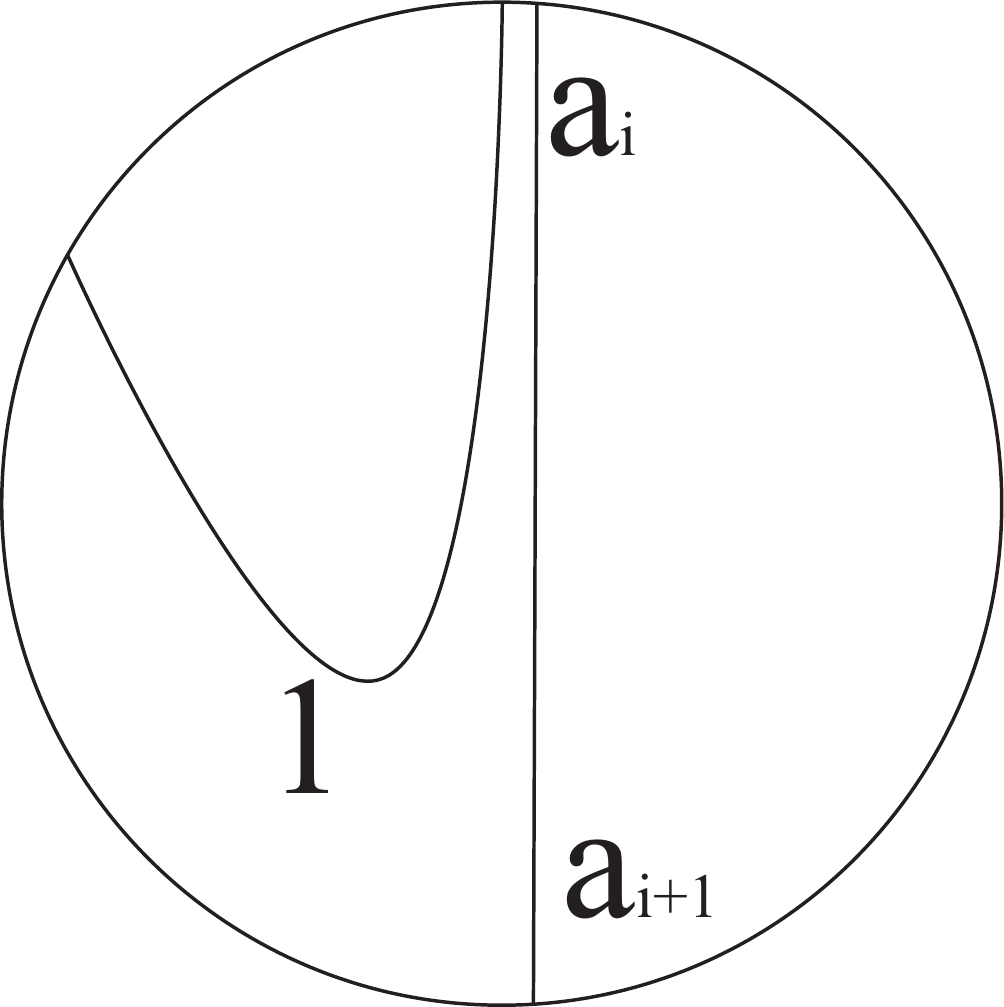}
\caption{$a_i=a_{i+1}+1$}
\label{f10}
\end{minipage}
\begin{minipage}[h]{0.5\columnwidth}
\centering
\includegraphics[width=1in,height=1in]{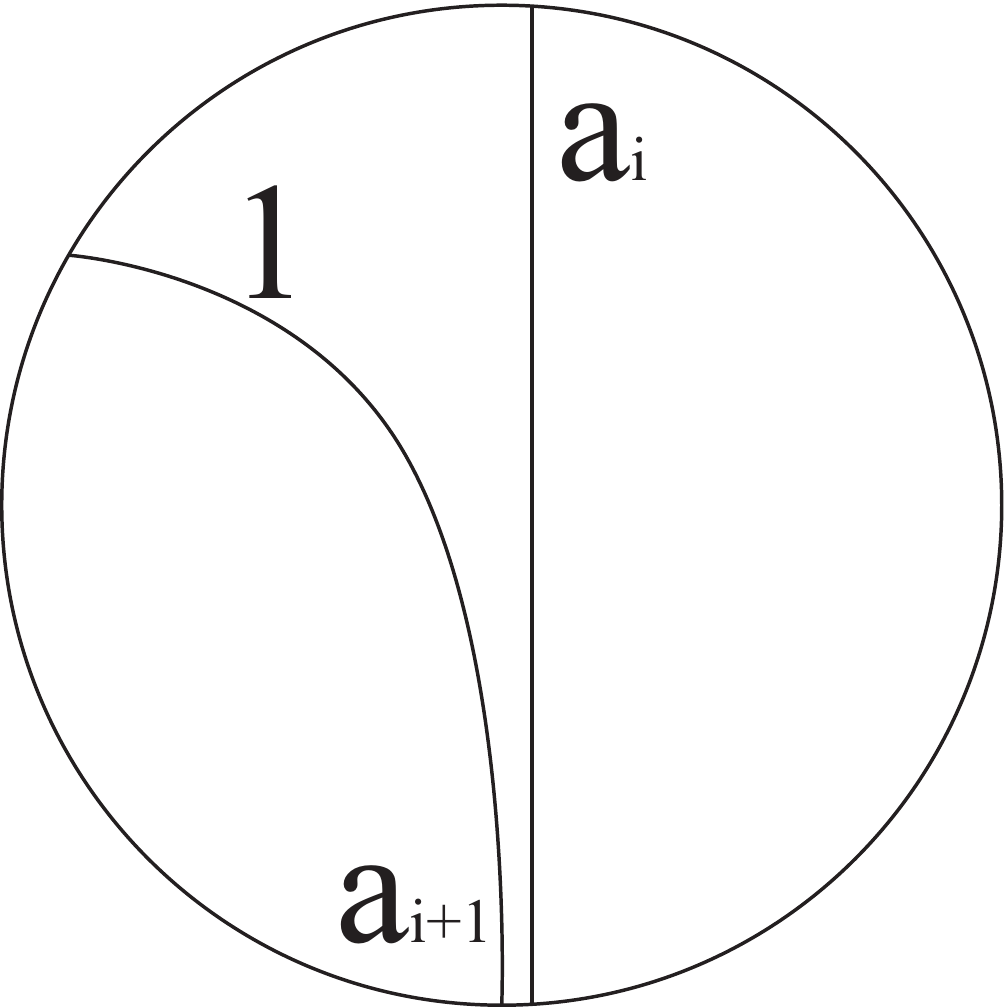}
\caption{$a_i=a_{i+1}-1$}
\label{f11}
\end{minipage}
\end{figure}

\begin{lem}\label{12}
$\langle B_{a_1,a_2,...,a_{2n-1}},D_{b_1,b_2,...,b_{2n-1}} \rangle=0$
if $(a_1,a_2,...,a_{2n-1})<(b_1,b_2,...,b_{2n-1})$.
\end{lem}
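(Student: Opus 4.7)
Let $j$ be the smallest index at which $a_j \neq b_j$. Since consecutive labels in each sequence differ by exactly $1$ and $a_i = b_i$ for $i < j$, the lex inequality $a < b$ forces $a_j = a_{j-1} - 1$ and $b_j = a_{j-1} + 1$, so $a_j$ and $b_j$ differ by $2$.

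The plan is to realize $\langle B_a, D_b\rangle$ as the Kauffman bracket of a closed skein diagram and then simplify it from the left by iterated use of Proposition \ref{4}. After the gluing that defines the bilinear form, the closed diagram has a ``ladder'' shape: two horizontal rails (the $B_a$-spine on top, decorated at each triple-point position by a cap or cup from Figures \ref{f10} and \ref{f11}; the $D_b$-spine on bottom, decorated by admissible triple points and Jones--Wenzl idempotents $f_{b_i}$) joined by $2n$ vertical single-strand rungs, namely the $2n - 2$ teeth together with the two end rungs coming from the spine endpoints $a_1 = b_1 = 1$ and $a_{2n-1} = b_{2n-1} = 1$.

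At step $k$ of the reduction, the leftmost cell of the ladder consists of two triple points $(a_k, a_{k+1}, 1)$ and $(b_k, b_{k+1}, 1)$ joined along a left rung (the residual $f_{a_k}$-idempotent carried over from step $k-1$, or the initial left endpoint of label $1$ when $k = 1$) and along the right rung (the tooth at position $k$, of thickness $1$). This is the ``O''-configuration of Proposition \ref{4}, which contracts to
\[
\delta_{a_{k+1},\, b_{k+1}} \; \Gamma(a_k, a_{k+1}) \; f_{a_{k+1}},
\]
so that the $f_{a_{k+1}}$-residual becomes the new left rung for step $k + 1$. For $k < j - 1$ we have $a_{k+1} = b_{k+1}$, so the Kronecker $\delta$ equals $1$ and the iteration continues (note $\Gamma(a_k, a_{k+1}) \neq 0$ by the walk condition $|a_{k+1} - a_k| = 1$). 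At step $k = j - 1$, however, $a_j \neq b_j$, hence $\delta_{a_j,\, b_j} = 0$, which forces $\langle B_a, D_b\rangle = 0$.

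The main obstacle I expect is justifying the geometric reduction: that the cap or cup from $B_a$ at each position genuinely completes, together with the Jones--Wenzl idempotents of $D_b$ on the neighboring spine segments, into the ``O''-configuration of Proposition \ref{4}. This should follow from Proposition \ref{3}: once Jones--Wenzl idempotents are inserted on the three legs of a cap or cup the resulting skein element lies in the one-dimensional space $T_{a_k, a_{k+1}, 1}$, and therefore agrees up to scalar with the $D_b$-triple-point generator sitting opposite it. Once this identification is in place, each iteration is a direct application of Proposition \ref{4}, and the vanishing at step $j - 1$ is automatic.
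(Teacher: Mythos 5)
Your overall strategy (contract the closed-up ladder cell by cell from the left and watch a Kronecker delta die at the first discrepancy) is different from the paper's, which is a one-shot argument: since $a_j<b_j$ at the first differing index, the paper cuts the paired diagram along a single circle meeting the $B$-spine in $a_j$ points and the $D$-spine in $b_j$ points; the piece on one side is an element of $\mathcal{S}(D^2)$ with $a_j+b_j$ boundary points composed with $f_{b_j}$, and because $a_j<b_j$ every planar matching must return at least one arc into the $f_{b_j}$ block, so the piece vanishes by $e_if_{b_j}=0$. No iteration and no evaluation of the nonzero cells is needed.

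The obstacle you flag is real, and your proposed repair does not close it. Proposition \ref{4} requires Jones--Wenzl idempotents at \emph{both} trivalent vertices of the ``O'', and the $B$-side vertex is a bare cup/cap. Proposition \ref{3} does not rescue this: the bare vertex has an idempotent (trivially, $f_1$ and the residual $f_{a_k}$) on only two of its three legs, while the outgoing spine edge of multiplicity $a_{k+1}$ carries none, so the bare vertex does not lie in the one-dimensional space $T_{a_k,1,a_{k+1}}$ and need not be a scalar multiple of its generator. In fact your claimed cell contraction $\delta_{a_{k+1},b_{k+1}}\,\Gamma(a_k,a_{k+1})\,f_{a_{k+1}}$ is \emph{false} in general: take $a_k=b_k=1$, $b_{k+1}=0$, $a_{k+1}=2$; the $D$-vertex is a decorated cap and the bare $B$-vertex routes both incoming strands onward, so the cell evaluates to a nonzero cup rather than to $\delta_{2,0}(\cdots)=0$. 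This shows the bare vertex genuinely differs from the decorated generator, so no appeal to Propositions \ref{3} and \ref{4} alone can justify the step. Your iteration only ever meets the cases $a_{k+1}=b_{k+1}$ (for $k\le j-2$) and $a_j=b_j-2$ (at $k=j-1$), and in those cases the asserted values are correct, but each requires its own direct verification: for $a_{k+1}=a_k+1=b_{k+1}$ one uses the absorption identity $(f_{a_k}\otimes 1)f_{a_k+1}=f_{a_k+1}$; for $a_{k+1}=a_k-1=b_{k+1}$ one uses that the partial closure of $f_{a_k}$ over one strand is $\tfrac{\Delta_{a_k}}{\Delta_{a_k-1}}f_{a_k-1}$; and at $k=j-1$ one uses that a cap applied to adjacent strands of $f_{b_j}$ is zero. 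With those three computations supplied your proof goes through, but as written the central step is unjustified, and the paper's single-cut argument reaches the conclusion with far less machinery.
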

\begin{proof}
Since $(a_1,a_2,...,a_{2n-1})<(b_1,b_2,...,b_{2n-1})$, there is a $j$ such that $a_j<b_j$.
If we pair $B_{a_1,a_2,...,a_{2n-1}}$ and $D_{b_1,b_2,...,b_{2n-1}}$ together,
we can find a circle passing through them at $a_j$ and $b_j$.
We cut the pairing along this circle to get an element as in Figure \ref{f12}.
By the properties of idempotents, it is easy to see that this element is 0 in $\mathcal{S}(D^2)$ with $a_j$ and $b_j$ on the boundary.
So we get the result.
\end{proof}

\begin{figure}[h]
\centering
\includegraphics[width=1in,height=1in]{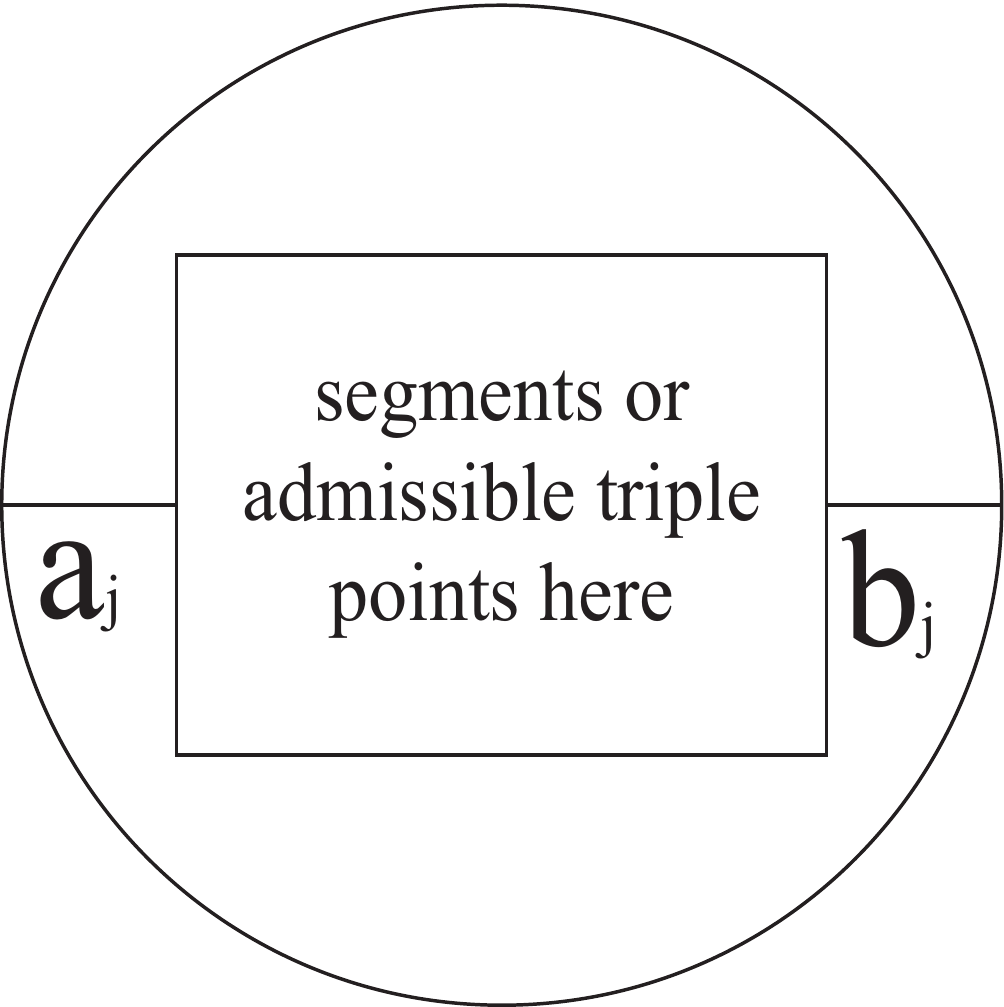}
\caption{We have the idempotent at $b_j$ and no idempotent at $a_j$.}
\label{f12}
\end{figure}
Before we go on, we introduce a lemma and a corollary.

\begin{lem}\label{15}
$\Theta(n,n+1,1)=\Delta_{n+1}$, and $\Theta(n,n-1,1)=\Delta_n$.
\end{lem}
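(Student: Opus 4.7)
The plan is to unpack the definition of $\Theta(a,b,c)$ in terms of the internal vertex data $(x,y,z)$ and observe that in both of our special cases, one of $x$ or $z$ vanishes, so the theta diagram degenerates into the closure of a single Jones--Wenzl idempotent.

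Recall that each trivalent vertex in the theta diagram splits the three cables into subcables with $x = (a+c-b)/2$, $y = (a+b-c)/2$, $z = (b+c-a)/2$ strands, as illustrated in Figure~\ref{f2}. Plugging in $(a,b,c) = (n, n+1, 1)$ gives $(x,y,z) = (0, n, 1)$, and plugging in $(a,b,c) = (n, n-1, 1)$ gives $(x,y,z) = (1, n-1, 0)$. So in the first case the $a$--cable shares no strands with the $c$--cable, and in the second case the $b$--cable shares no strands with the $c$--cable. In either case the theta diagram on $S^2$ is isotopic to the planar closure of a stack of $n+1$ (respectively $n$) parallel strands, carrying the three idempotents arranged in tensor products.

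Concretely, $\Theta(n,n+1,1)$ is the trace of $(f_n \otimes f_1) \cdot f_{n+1} \cdot (f_n \otimes f_1)$, and $\Theta(n,n-1,1)$ is the trace of $(f_{n-1} \otimes f_1) \cdot f_n \cdot (f_{n-1} \otimes f_1)$. The key step is then the absorption identity for Jones--Wenzl idempotents: for any $0 \le k \le m$, one has
\begin{equation}
f_m \cdot (f_k \otimes f_{m-k}) = (f_k \otimes f_{m-k}) \cdot f_m = f_m.
\notag
\end{equation}
This follows from Proposition \ref{1}: by part (2), $(f_k \otimes f_{m-k}) - 1$ lies in the subalgebra generated by $\{e_i : i \ne k\}$ with $1 \le i \le m-1$; by part (1), $f_m$ annihilates each of these $e_i$; and by part (3), $f_m$ is idempotent.

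Applying this identity to each of the two expressions collapses them to the traces of $f_{n+1}$ and $f_n$ respectively, and the trace of $f_m$ is by definition $\Delta_m$. Hence $\Theta(n,n+1,1) = \Delta_{n+1}$ and $\Theta(n,n-1,1) = \Delta_n$. There is no genuine obstacle here; the only care needed is in translating the pictorial definition of $\Theta$ into an algebraic product so that the absorption property can be invoked cleanly.
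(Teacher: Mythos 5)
Your proof is correct and follows essentially the same route as the paper: the paper's diagrammatic chain of equalities amounts to observing that the internal label $x$ (resp.\ $z$) vanishes, so the theta degenerates to the closure of stacked idempotents, which are then absorbed into $f_{n+1}$ (resp.\ $f_n$) to leave $\Delta_{n+1}$ (resp.\ $\Delta_n$). Your algebraic justification of the absorption identity via Proposition \ref{1} just makes explicit what the paper's pictures assert.
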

\begin{proof}
	\begin{eqnarray}
		&&\Theta(n,n+1,1)\notag\\
		&=&\begin{minipage}{1in}\includegraphics[width=1in]{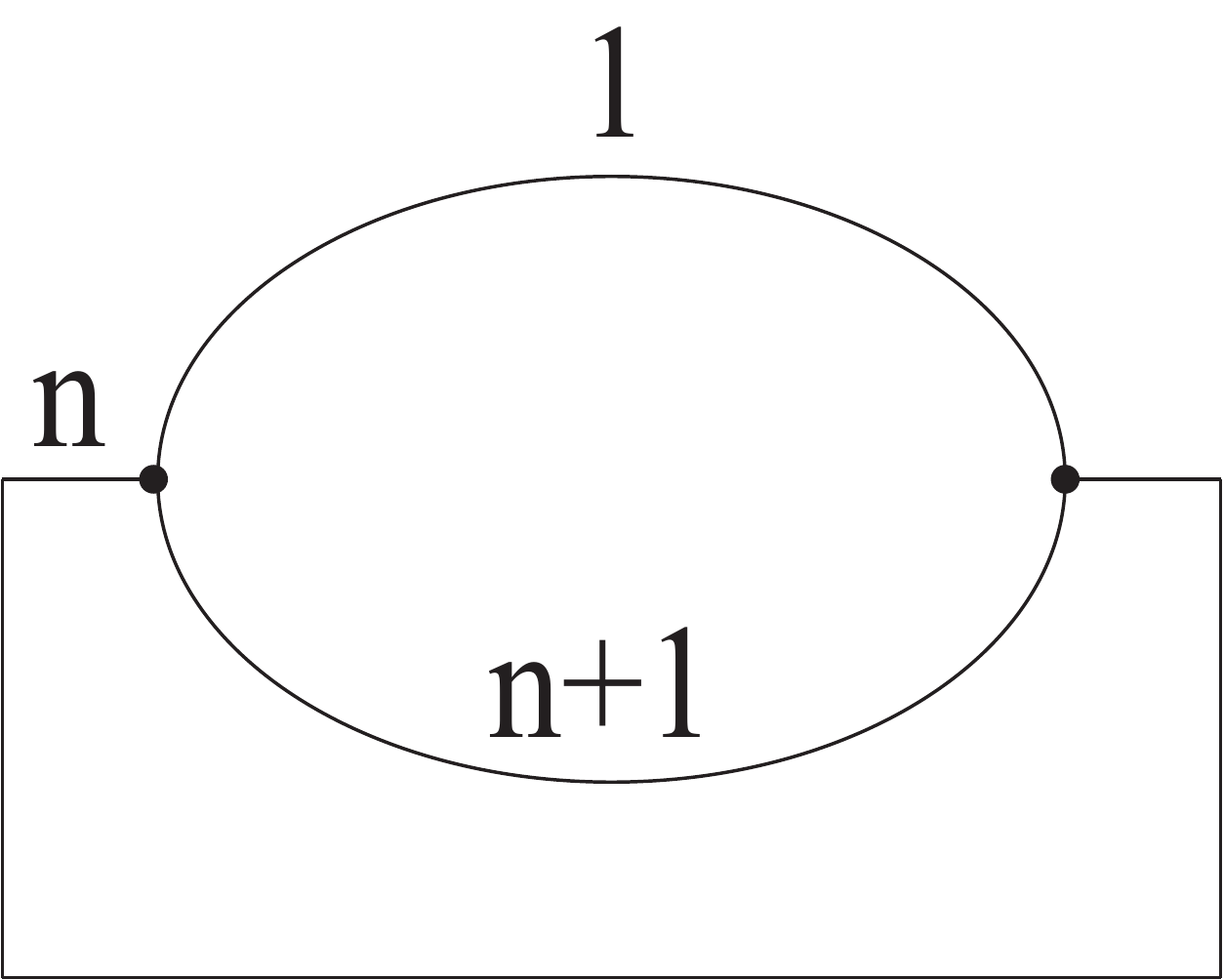}\end{minipage}=\begin{minipage}{1in}\includegraphics[width=1in]{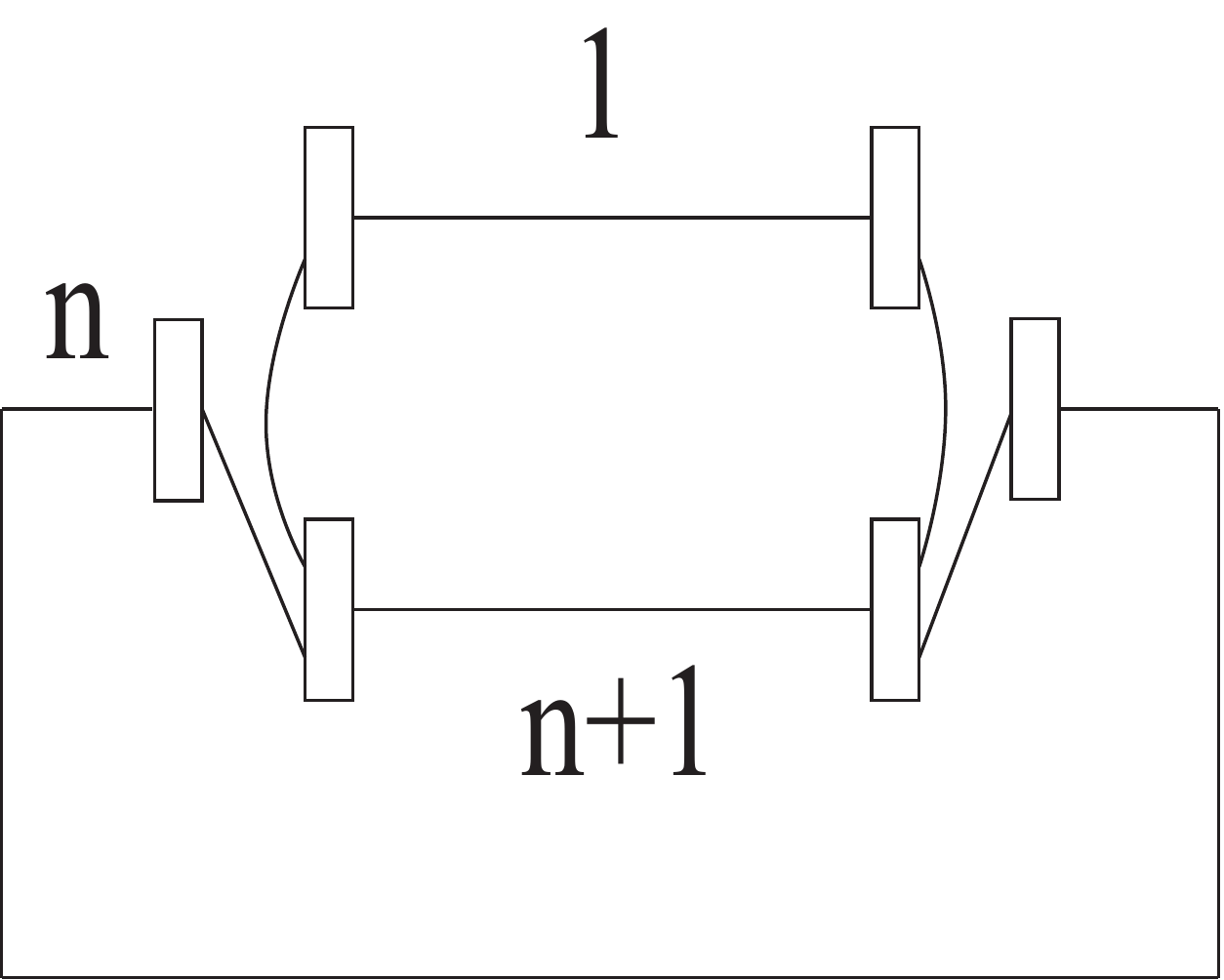}\end{minipage}\notag\\
		&=&\begin{minipage}{1in}\includegraphics[width=1in]{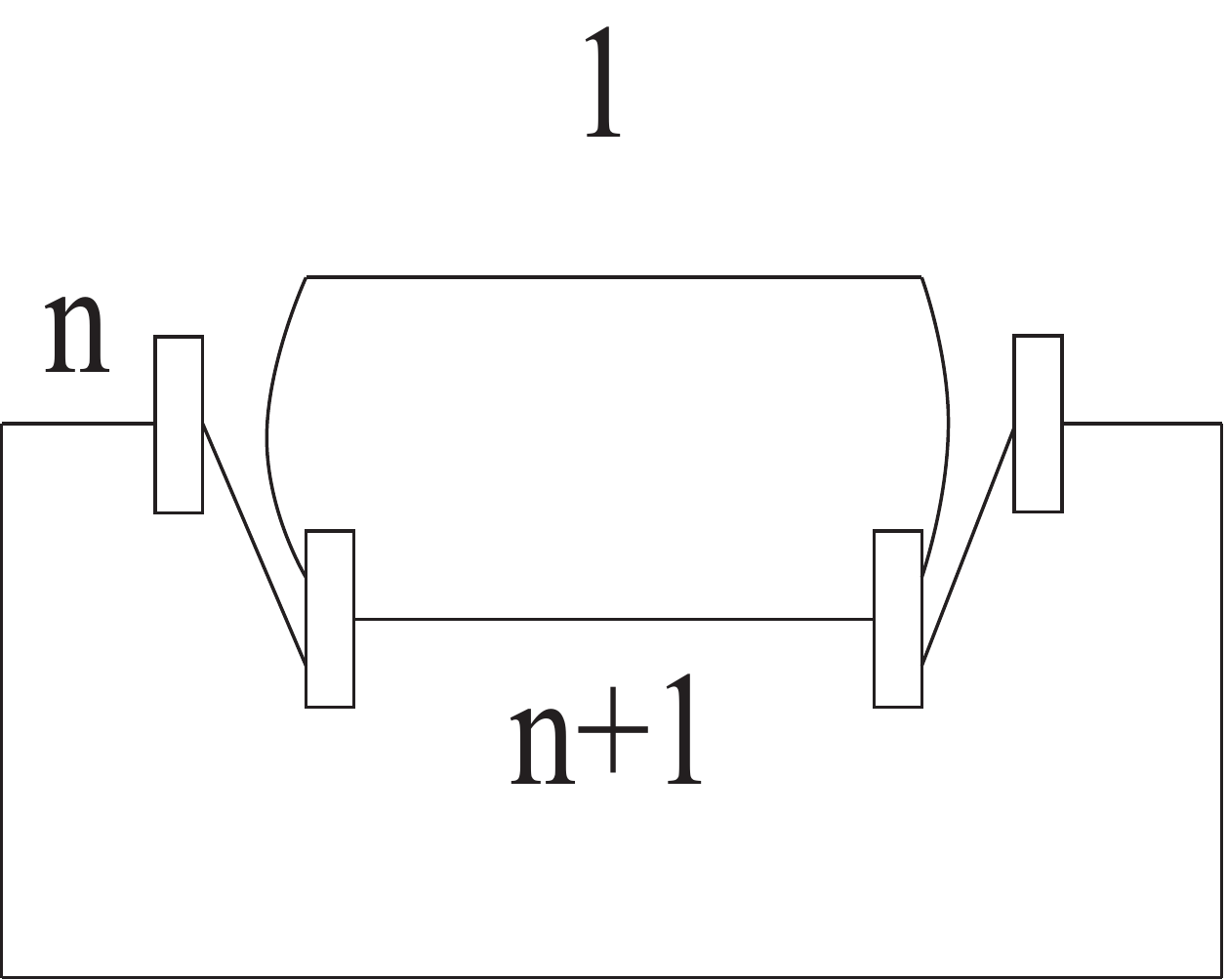}\end{minipage}=\begin{minipage}{1in}\includegraphics[width=1in]{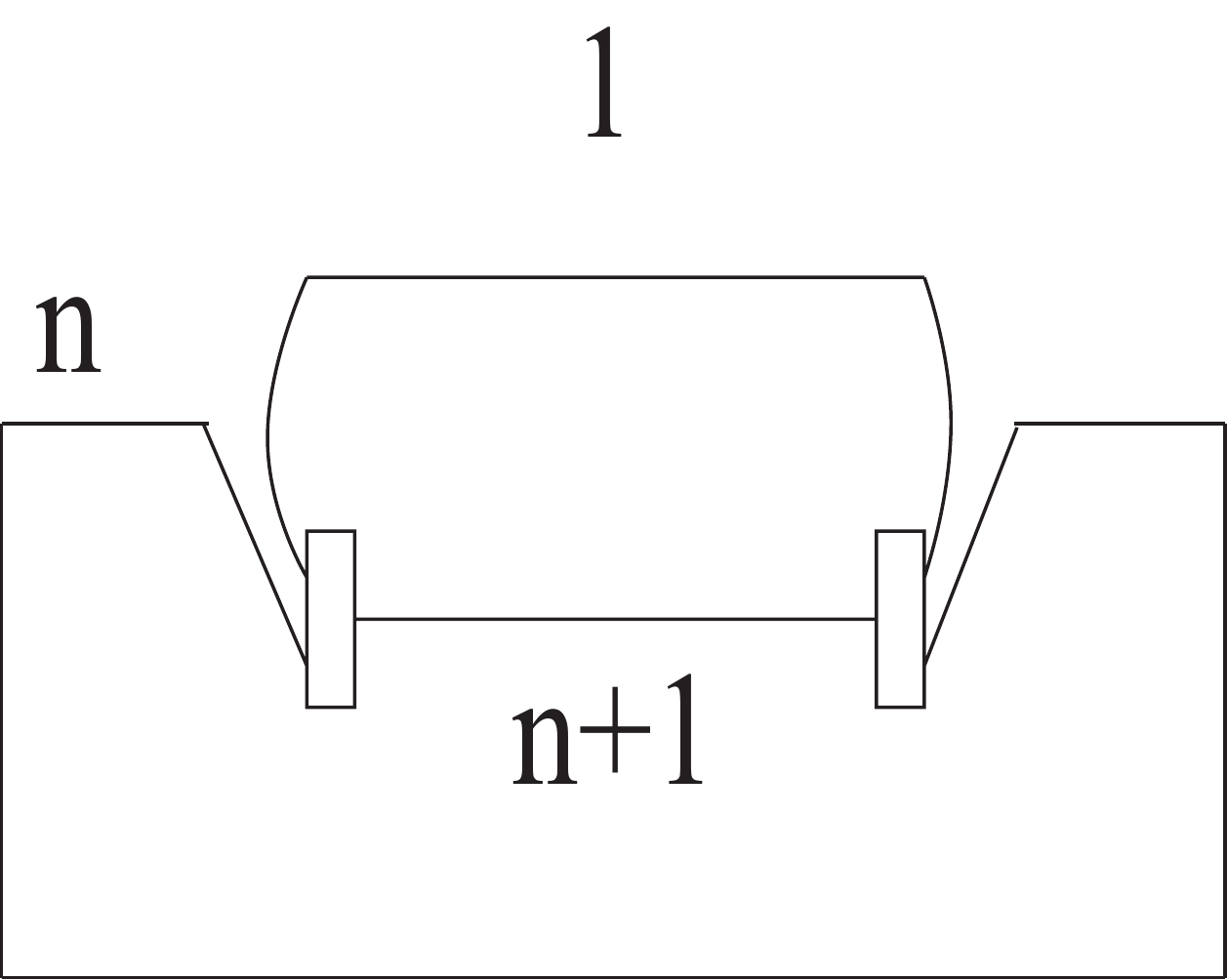}\end{minipage}\notag\\
		&=&\Delta_{n+1}\notag
	\end{eqnarray}
Similarly, it is easy to see that $\Theta(n,n-1,1)=\Delta_n$.
\end{proof}

\begin{cor}\label{10}
    \[
        \Gamma(b,a) =
        \begin{cases}

                1                             &\text{if $a=b+1$,} \\
                \frac{\Delta_{a+1}}{\Delta_a} &\text{if $a=b-1$.}

        \end{cases}
    \]
\end{cor}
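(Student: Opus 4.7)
The plan is a direct substitution argument using Lemma \ref{15} and the definition of $\Gamma$ recalled in the remark after Proposition \ref{6}, namely $\Gamma(b,a) = \frac{\Theta(a,b,1)}{\Delta_a}$. Since the triple $(a,b,1)$ must be admissible for $\Theta(a,b,1)$ to be nonzero, and the constraints $a \le b+1$, $b \le a+1$, $a+b+1$ even force $|a-b|=1$, the only two cases to consider are $b = a-1$ and $b = a+1$, which correspond exactly to the two branches of the corollary.

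First, I would handle the case $a = b+1$, i.e.\ $b = a-1$. Here $\Gamma(b,a) = \Gamma(a-1,a) = \frac{\Theta(a,a-1,1)}{\Delta_a}$, and the second identity in Lemma \ref{15} (applied with $n = a$) gives $\Theta(a,a-1,1) = \Delta_a$, hence $\Gamma(b,a) = 1$.

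Next, for the case $a = b-1$, i.e.\ $b = a+1$, I would write $\Gamma(b,a) = \Gamma(a+1,a) = \frac{\Theta(a,a+1,1)}{\Delta_a}$, and apply the first identity in Lemma \ref{15} (again with $n = a$) to get $\Theta(a,a+1,1) = \Delta_{a+1}$, hence $\Gamma(b,a) = \frac{\Delta_{a+1}}{\Delta_a}$.

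There is no real obstacle here; the content of the corollary is entirely in Lemma \ref{15}, and the corollary is merely recasting that identity after dividing by $\Delta_a$. The only minor point to remark on is that the formula $\Gamma(b,a) = 0$ for $|a-b|>1$ (noted in the remark after Proposition \ref{6}) together with admissibility of $(a,b,1)$ tells us that these two cases are exhaustive, so there is nothing more to check.
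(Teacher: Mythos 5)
Your proof is correct and is exactly the argument the paper intends: the corollary is obtained by substituting the two identities of Lemma \ref{15} into the definition $\Gamma(b,a)=\Theta(a,b,1)/\Delta_a$, which is all the paper's one-line proof (``This follows easily from Lemma \ref{15}'') amounts to. Your added remark that admissibility of $(a,b,1)$ makes the two cases exhaustive is a harmless and accurate clarification.
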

\begin{proof}
This follows easily from Lemma \ref{15}.
\end{proof}
Now, we can prove the following:

\begin{prop}\label{13}
$\langle B_{a_1,a_2,...,a_{2n-1}},D_{a_1,a_2,...,a_{2n-1}} \rangle\ =\ \langle D_{a_1,a_2,...,a_{2n-1}},D_{a_1,a_2,...,a_{2n-1}} \rangle$ for all ${(a_1,a_2,...,a_{2n-1})}$ in $\mathfrak{A}_n$.
\end{prop}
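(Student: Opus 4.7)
The plan is to exploit the absorbing property of the Jones-Wenzl idempotents $f_{a_i}$ that decorate every internal strand of $D_{a_1,\ldots,a_{2n-1}}$, using the structural fact that $B_{(a)}$ and $D_{(a)}$ are built over the same trivalent skeleton. The triple points and the connecting strands of widths $a_i$ coincide for both elements; $D_{(a)}$ differs from $B_{(a)}$ only by the insertion of $f_{a_i}$ on each internal strand. In particular, $B_{(a)}$'s bare cup/cap at each triple point is precisely the raw admissible generator of $T_{a_i,a_{i+1},1}$ before projection, while $D_{(a)}$'s vertex is its $f$-projected image.

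First I would set up the closed diagram evaluating $\langle B_{(a)}, D_{(a)}\rangle$ and argue that inserting a copy of $f_{a_i}$ on each internal $B$-side strand leaves the value of the diagram unchanged. The justification uses Proposition \ref{1}: along the closed loop formed by the $B$- and $D$-strands glued through the closure arcs, the identity $f_{a_i}^2 = f_{a_i}$ absorbs the newly inserted idempotent into the one already carried by the adjacent $D$-side strand. Carrying out this insertion on every internal strand upgrades the $B$-side into $D_{(a)}$ itself, giving $\langle B_{(a)}, D_{(a)}\rangle = \langle D_{(a)}, D_{(a)}\rangle$.

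A more computational alternative would imitate the inductive proof of Lemma \ref{7}: peel off the outermost triple point of the pairing, extract the factor $\Gamma(a_{2n-2}, a_{2n-1})\Delta_{a_{2n-1}}$ by means of Proposition \ref{4}, and invoke the inductive hypothesis on the remaining smaller pairing. At each peeling step, the theta-collapse formula yields the same scalar $\Theta(a_i, a_{i+1}, 1)/\Delta_{a_i}$ regardless of whether the inner strands carry idempotents (as in $D$) or are bare (as in $B$), because the outer idempotents from $D$ on the $a_i$-width strand already project the local configuration onto its admissible part.

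The main obstacle is making the sliding/insertion step diagrammatically rigorous, or equivalently, extending Proposition \ref{4} to the asymmetric setting where only one side of the theta diagram carries explicit idempotents. The underlying tool throughout is Proposition \ref{1}.1, namely $f_{a_i} e_j = 0$, which forces any would-be correction arising from the missing idempotents on $B$'s strands to vanish upon contact with $D$'s idempotents.
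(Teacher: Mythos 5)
Your primary argument has a genuine gap at exactly the point you flag as the ``main obstacle,'' and the flagged obstacle is not a technicality but the whole content of the proposition. The inserted idempotent $f_{a_i}$ on an internal cable of $B_{(a)}$ is \emph{not} adjacent to any idempotent of $D_{(a)}$ in the closed diagram: it is separated from the $D$-side spine by $B$'s own triple points, the boundary gluing, and $D$'s legs, so there is nothing for $f_{a_i}^2=f_{a_i}$ to absorb into. What you would actually have to show is that each correction term --- $B_{(a)}$ with a turnback from $f_{a_i}-1$ inserted into the width-$a_i$ cable --- pairs to zero against $D_{(a)}$. This is true, but it amounts to identifying each such term as a scalar multiple of some $B_{(c)}$ with $(c)<(a)$ in the lexicographic order and then invoking Lemma \ref{12}; that identification (every crossingless matching is some $B_{(c)}$, and inserting a turnback strictly lowers the sequence) is real work that in the paper only becomes available \emph{after} Proposition \ref{13} (via Corollary \ref{16}). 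As written, the absorption step is circular or unjustified. Similarly, Proposition \ref{1}.1 only kills a turnback that sits directly against an idempotent, which is not the configuration you have.

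Your ``computational alternative'' is essentially the paper's actual proof, and your key observation there --- that the same scalar comes out whether the strand being contracted is bare (the $B$ side) or carries an idempotent (the $D$ side), because the surviving idempotent projects the local configuration --- is the right idea. But the details differ in ways that matter: the paper does \emph{not} peel the outermost triple point (the truncated sequences would leave $\mathfrak{A}_{n-1}$, and $B$ has no structure at its ends to contract against). Instead it treats the maximal sequence $(1,2,\dots,n,\dots,2,1)$ as a separate direct computation, and for every other sequence it locates two interior local maxima $i$ and $j$ on opposite sides of the midpoint, where $B_{(a)}$ has an explicit turnback (Figure \ref{f10}); contracting there with Proposition \ref{4} extracts the factor $\Gamma(a_i,a_{i+1})\Gamma(a_j,a_{j+1})$ from both $\langle B,D\rangle$ and $\langle D,D\rangle$ (the latter via Lemma \ref{7} and Corollary \ref{10}) and reduces to $TL_{n-2}$. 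To complete your proof you should abandon the insertion argument and carry out this peak-contraction induction in full.
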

\begin{proof}
We prove this by induction on $n$.
Suppose $n=2$. Then it is easy to check that
\begin{align*}
\langle B_{1,0,1},D_{1,0,1} \rangle\ =\ \langle D_{1,0,1},D_{1,0,1} \rangle, \langle B_{1,2,1},D_{1,2,1} \rangle\ =\ \langle D_{1,2,1},D_{1,2,1} \rangle.
\end{align*}
Assume that the result is true for $n<k$.
We will prove it is true for $n=k$. For $(a_1,...,a_{2n-1})=(1,2,...,k,...,2,1)$, we have
\begin{align*}
\langle B_{1,2,...,k,...,2,1},D_{1,2,...,k,...,2,1} \rangle\ =\ \langle D_{1,2,...,k,...,2,1},D_{1,2,...,k,...,2,1} \rangle
\end{align*}
by direct computation. 
For $(a_1,...,a_{2n-1})\neq (1,2,...,k,...,2,1)$, we choose $i>k>j$ such that $a_{i-1}=a_{i+1}$, $a_{j-1}=a_{j+1}$ and $a_{i}=a_{i-1}+1$, $a_{j}=a_{j-1}+1$.
Then by Proposition \ref{4}, we have
\begin{eqnarray*}
&&\langle B_{a_1,a_2,...,a_{2n-1}},D_{a_1,a_2,...,a_{2n-1}} \rangle \\
&=&\Gamma(a_i,a_{i+1})\Gamma(a_j,a_{j+1})\\
&&\times\langle B_{a_1,...,\hat{a}_i,\hat{a}_{i+1},...\hat{a}_j,\hat{a}_{j+1},...,a_{2n-1}},D_{a_1,...,\hat{a}_i,\hat{a}_{i+1},...,\hat{a}_j,\hat{a}_{j+1},...,a_{2n-1}} \rangle
\end{eqnarray*}
where the hat on $a_i$ means that it is removed.\\
It is easy to see that 
\begin{align*}
B_{a_1,...,\hat{a}_i,\hat{a}_{i+1},...\hat{a}_j,\hat{a}_{j+1},...,a_{2n-1}}\in\mathfrak{B}_{n-2},
D_{a_1,...,\hat{a}_i,\hat{a}_{i+1},...,\hat{a}_j,\hat{a}_{j+1},...,a_{2n-1}}\in\mathfrak{D}_{n-2}.
\end{align*}
By Lemma \ref{7} and Corollary \ref{10},
\begin{eqnarray*}
&&\langle D_{a_1,a_2,...,a_{2n-1}},D_{a_1,a_2,...,a_{2n-1}} \rangle\\
&=&\Gamma(a_i,a_{i+1})\Gamma(a_j,a_{j+1})\\
&&\times\langle D_{a_1,...,\hat{a}_i,\hat{a}_{i+1},...\hat{a}_j,\hat{a}_{j+1},...,a_{2n-1}},D_{a_1,...,\hat{a}_i,\hat{a}_{i+1},...,\hat{a}_j,\hat{a}_{j+1},...,a_{2n-1}} \rangle.
\end{eqnarray*}
By induction,
\begin{eqnarray*}
&&\langle B_{a_1,...,\hat{a}_i,\hat{a}_{i+1},...\hat{a}_j,\hat{a}_{j+1},...,a_{2n-1}},D_{a_1,...,\hat{a}_i,\hat{a}_{i+1},...,\hat{a}_j,\hat{a}_{j+1},...,a_{2n-1}} \rangle \nonumber \\
&=&\langle D_{a_1,...,\hat{a}_i,\hat{a}_{i+1},...\hat{a}_j,\hat{a}_{j+1},...,a_{2n-1}},D_{a_1,...,\hat{a}_i,\hat{a}_{i+1},...,\hat{a}_j,\hat{a}_{j+1},...,a_{2n-1}} \rangle.
\end{eqnarray*}
Hence,
\begin{align*}
\langle B_{a_1,a_2,...,a_{2n-1}},D_{a_1,a_2,...,a_{2n-1}} \rangle = \langle D_{a_1,a_2,...,a_{2n-1}},D_{a_1,a_2,...,a_{2n-1}} \rangle.
\end{align*}
\end{proof}

\begin{prop}\label{14}
\[
	\begin{pmatrix}
	B_{1,2,...,n,...,1}\\
	\vdots\\
	B_{1,0,1,0,...,0,1}
	\end{pmatrix}
	=
	\begin{pmatrix}
  1 & * & \cdots &* \\
  0 & 1 & \cdots &* \\
  \vdots  & \vdots  & \ddots & \vdots  \\
  0 & 0 & \cdots & 1
  \end{pmatrix}
 	\begin{pmatrix}
	D_{1,2,...,n,...,1}\\
	\vdots\\
	D_{1,0,1,0,...,0,1}
	\end{pmatrix}.
\]
\end{prop}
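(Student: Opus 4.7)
The plan is to turn the proposition into a direct computation of the change-of-basis matrix entries by exploiting the orthogonality of the $D$-basis (Lemma \ref{7}) together with the two key inputs: Lemma \ref{12} (vanishing above the diagonal) and Proposition \ref{13} (normalization on the diagonal).

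First, since $\mathfrak{D}_n$ is a basis of $TL_n$ (by the lemma immediately preceding this section), for each $a=(a_1,\dots,a_{2n-1})\in\mathfrak{A}_n$ there are unique coefficients $c_{a,b}\in\Lambda$ such that
\[
B_{a_1,\dots,a_{2n-1}}=\sum_{b\in\mathfrak{A}_n}c_{a,b}\,D_{b_1,\dots,b_{2n-1}}.
\]
Applying $\langle\,\cdot\,,D_c\rangle$ to both sides and using Lemma \ref{7}, which shows that the $D_b$'s are orthogonal with respect to the bilinear form, I get
\[
\langle B_a,D_c\rangle=c_{a,c}\,\langle D_c,D_c\rangle.
\]
Lemma \ref{7} also gives an explicit product formula for $\langle D_c,D_c\rangle$ as a product of $\Gamma$-factors and one $\Delta_{a_{2n-1}}$; by Corollary \ref{10} each of these factors is a nonzero element of the integral domain $\Lambda$, so $\langle D_c,D_c\rangle$ is a nonzero element of $\Lambda$. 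Hence $c_{a,c}$ is uniquely determined by $\langle B_a,D_c\rangle$.

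Now I would read off the two cases. If $a<c$ in the total order on $\mathfrak{A}_n$, Lemma \ref{12} yields $\langle B_a,D_c\rangle=0$, hence $c_{a,c}=0$. If $a=c$, Proposition \ref{13} yields $\langle B_a,D_a\rangle=\langle D_a,D_a\rangle$, hence $c_{a,a}=1$. Therefore
\[
B_{a_1,\dots,a_{2n-1}}=D_{a_1,\dots,a_{2n-1}}+\sum_{b<a}c_{a,b}\,D_{b_1,\dots,b_{2n-1}}.
\]

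To finish, I would order the indexing set $\mathfrak{A}_n$ from the largest tuple $(1,2,\dots,n,\dots,2,1)$ down to the smallest tuple $(1,0,1,0,\dots,0,1)$ (one readily checks that these are indeed the lexicographic maximum and minimum of $\mathfrak{A}_n$). With both rows and columns of the matrix listed in this decreasing order, the identity above says that the entry in row $a$ and column $b$ is $1$ when $b=a$, is zero when $b$ comes before $a$ (i.e., $b>a$, below the diagonal), and is unrestricted when $b$ comes after $a$ (above the diagonal), yielding exactly the upper triangular matrix with $1$'s on the diagonal asserted in the proposition. The only genuinely non-routine step is the bookkeeping to confirm that the lexicographic order on tuples matches the row/column order displayed in the statement; everything else is a short assembly of Lemmas \ref{7}, \ref{12} and Proposition \ref{13}.
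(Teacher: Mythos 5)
Your proof is correct and is essentially the argument the paper intends: the paper's one-line proof cites Lemma \ref{12} and Proposition \ref{13}, and your expansion of $B_a$ in the orthogonal basis $\mathfrak{D}_n$, extracting coefficients via $\langle B_a,D_c\rangle=c_{a,c}\langle D_c,D_c\rangle$, is exactly the standard way to assemble those two inputs. The extra care you take in noting that $\langle D_c,D_c\rangle$ is a nonzero (indeed invertible) element of $\Lambda$, and in matching the decreasing lexicographic order to the displayed row/column order, is a welcome filling-in of details the paper leaves implicit.
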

\begin{proof}
This follows easily from Lemma \ref{12} and Proposition \ref{13}.
\end{proof}

\begin{cor}
\label{16}
$\{B_{a_1,...,a_{2n-1}}\}=\mathfrak{B}_n$.
\end{cor}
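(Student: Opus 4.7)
The plan is to combine Proposition \ref{14} with a dimension count. Proposition \ref{14} exhibits the transition matrix from $\{B_{a_1,\ldots,a_{2n-1}}\}$ to the basis $\mathfrak{D}_n$ as upper triangular with $1$'s on the diagonal (with respect to the total order on $\mathfrak{A}_n$). Such a matrix is invertible, so $\{B_{a_1,\ldots,a_{2n-1}}\}_{(a_i)\in\mathfrak{A}_n}$ is a linearly independent subset of $TL_n$, and in fact spans $TL_n$.

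By construction (see Section \ref{relation}), each $B_{a_1,\ldots,a_{2n-1}}$ is a non-crossing diagram obtained by filling each trivalent vertex with one of the two local pictures from Figures \ref{f10}, \ref{f11}. Thus $\{B_{a_1,\ldots,a_{2n-1}}\}\subseteq\mathfrak{B}_n$ as a set of diagrams.

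To conclude equality it suffices to check the two sets have the same cardinality. The sequences in $\mathfrak{A}_n$ are precisely lattice paths $(a_1,a_2,\ldots,a_{2n-1})$ with $a_1=a_{2n-1}=1$, $a_i\in\mathbb{N}$, and $|a_{i+1}-a_i|=1$; equivalently (prepending a $0$) they are Dyck paths from height $0$ to height $1$ of length $2n-1$, which by the standard Catalan-number count number $c_n=\frac{1}{n+1}\binom{2n}{n}$. On the other hand, $\mathfrak{B}_n$ is the set of non-crossing pairings on $2n$ boundary points, also counted by $c_n$. Hence $\card(\mathfrak{A}_n)=\card(\mathfrak{B}_n)$, and since the injective map $\mathfrak{A}_n\to\mathfrak{B}_n$ given by $(a_i)\mapsto B_{a_1,\ldots,a_{2n-1}}$ lands in a set of the same (finite) cardinality, it must be a bijection. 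Therefore $\{B_{a_1,\ldots,a_{2n-1}}\}=\mathfrak{B}_n$.

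The only step that needs a moment's care is the bijection/cardinality claim: the injectivity of $(a_i)\mapsto B_{a_1,\ldots,a_{2n-1}}$ follows for free from the linear independence established via Proposition \ref{14}, so the main (and essentially only) combinatorial input is the Catalan count $\card(\mathfrak{A}_n)=c_n$, which is routine. Everything else is a direct consequence of results already proved.
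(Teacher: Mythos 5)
Your proposal is correct and follows essentially the same route as the paper: injectivity of $(a_i)\mapsto B_{a_1,\ldots,a_{2n-1}}$ from the unitriangular transition matrix of Proposition \ref{14}, the containment $\{B_{a_1,\ldots,a_{2n-1}}\}\subseteq\mathfrak{B}_n$, and a Catalan-number cardinality count (the paper routes the count through $\card\mathfrak{D}_n=\card\mathfrak{B}_n$, which is the same Dyck-path enumeration you invoke). No gaps.
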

\begin{proof}
By Proposition \ref{14}, we can see that $B_{a_1,...,a_{2n-1}}\neq B_{b_1,...,b_{2n-1}}$ if $(a_1,...,a_{2n-1})\neq(b_1,...,b_{2n-1})$.
Moreover, $\card\{B_{a_1,...,a_{2n-1}}\}=\card\mathfrak{D}_n=\card\mathfrak{B}_n$ and $\{B_{a_1,...,a_{2n-1}}\}\subset\mathfrak{B}_n$.
So we have $\{B_{a_1,...,a_{2n-1}}\}=\mathfrak{B}_n$.
\end{proof}

By Corollary \ref{16} and linear algebra,
we can see that the $\det(G_n)$ we get by using the basis $\mathfrak{B}_n$ is the same as $\det(G_n)$ we get by using the basis $\mathfrak{D}_n$.

\section{Lattice path}
\label{latticepath}

\begin{de}
A lattice path in the plane is a path from $(0,0)$ to $(a,b)$ with northeast and
southeast unit steps, where $a,b\in\mathbb{Z}$. A Dyck path is a lattice path that
never goes below the $x-$axis. We denote the set of all Dyck path from $(0,0)$ to
$(a,b)$ by $\mathcal{D}_{(a,b)}$.
\end{de}

\begin{figure}[h]
\centering
   \includegraphics[width=1in,height=1.2in]{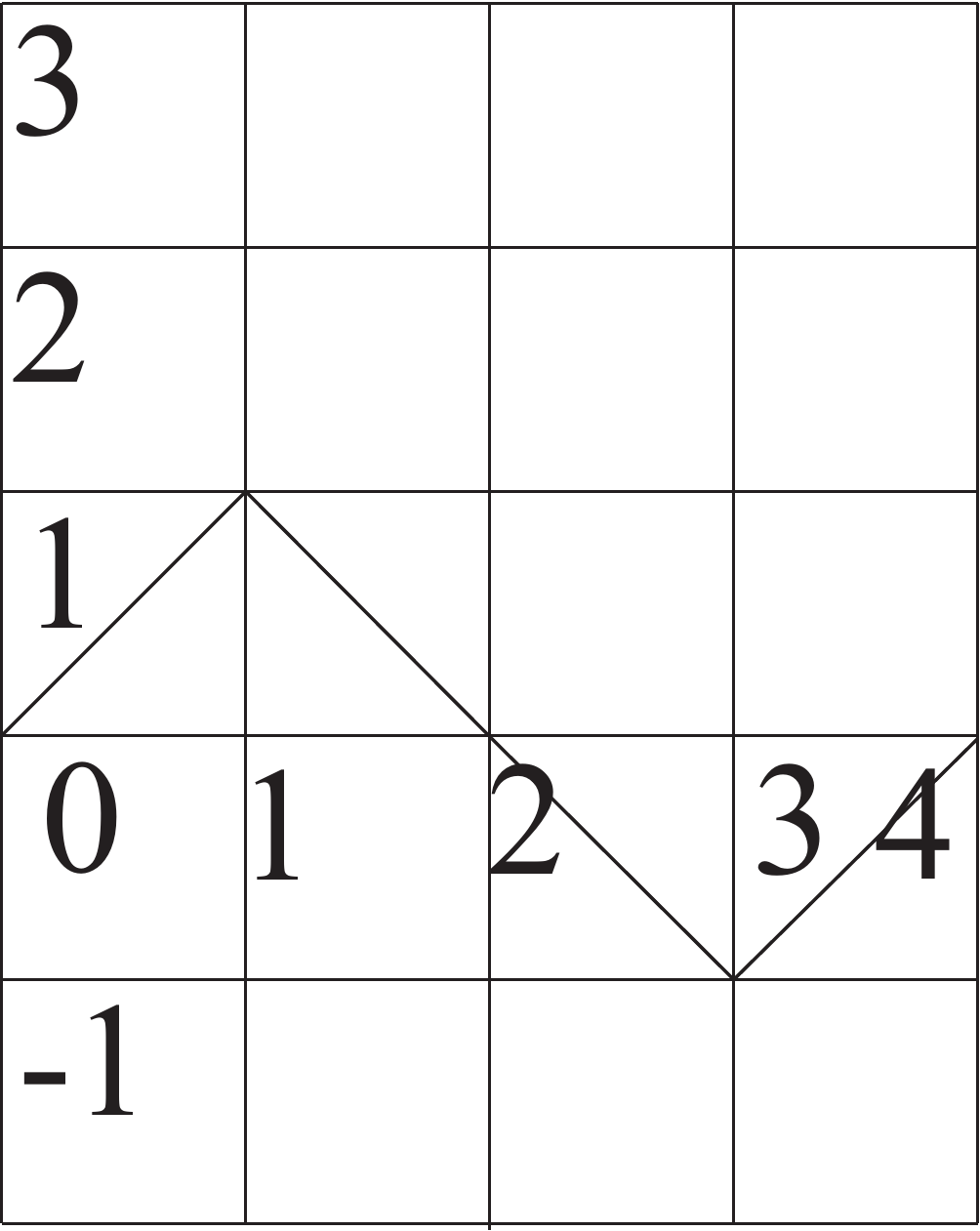}\hskip 0.5in
   \includegraphics[width=1in,height=1.2in]{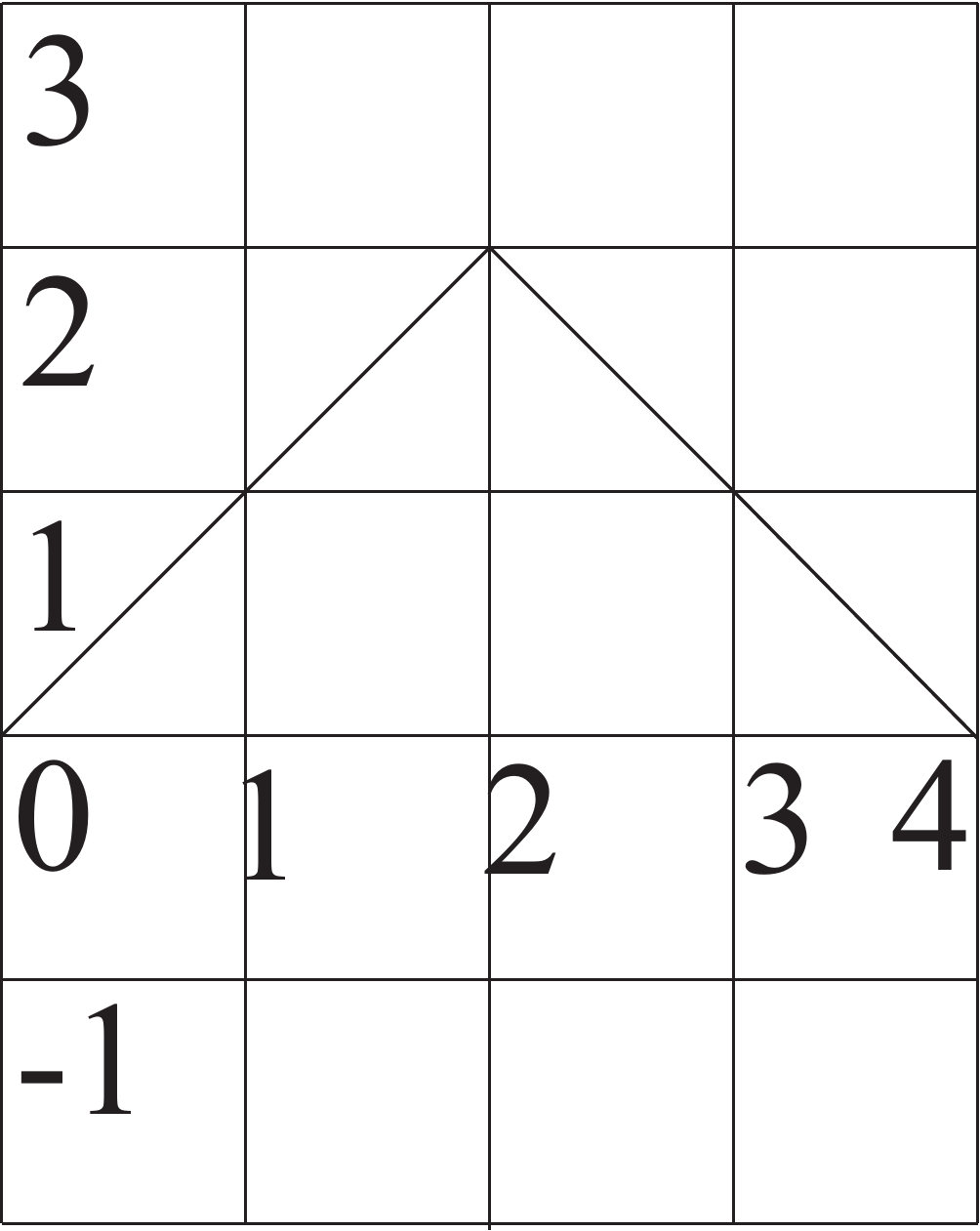}
   \caption{On the left is a lattice path. On the right is a Dyck path.}
\end{figure}

\begin{rem}
There is a natural bijection $f$ from $\mathfrak{D}_n$ to $\mathcal{D}_{(2n,0)}$, the set of all Dyck paths from $(0,0)$ to $(2n,0)$ as follows:\\
For each $D_{a_1,...,a_{2n-1}}\in\mathfrak{D}_n$, we construct a path from $(0,0)$ to $(2n,0)$ with step $(i,a_i)$
for all $1\leq i\leq 2n-1$. Since $a_i$ satisfies 	
\begin{enumerate}
	\item $a_1=a_{2n-1}=1$;
	\item $a_i\in \mathbb{N}$ for all $i$;	
	\item $\|a_i-a_{i-1}\|=1$ for all $i$.
\end{enumerate}
We can see that this is a Dyck path.
\end{rem}

\begin{rem}
The reflection principle \cite[page 22]{C} says that the number of all Dyck paths from $(0,0)$ to $(2n,0)$ is the Catalan number $C_n=\frac{1}{n+1}\binom{2n}{n}$.
Hence, we recover the well-known result that the dimension of $TL_n$ is $C_n$.
\end{rem}

\section{Proof of the Main Theorem}
\label{proof}
Now we can start our proof of the main theorem. 
By Lemma \ref{8}, we know that $\{D_{a_1,...,a_{2n-1}}\}$ is an orthogonal basis with respect to the bilinear form.
Thus the matrix of $G_n$ is a diagonal matrix under this basis.
We have
\begin{align*}
\det(G_n)=\prod_{(a_1,...,a_{2n-1})}\langle D_{a_1,...,a_{2n-1}},D_{a_1,...,a_{2n-1}}\rangle.
\end{align*}
Then by Lemma \ref{7},
we have
\begin{align*}
\det(G_n)\nonumber=\prod_{(a_1,...,a_{2n-1})}\Gamma(a_1,a_2)\Gamma(a_2,a_3)\dots\Gamma(a_{2n-2},a_{2n-1})\Delta_1.
\end{align*}
Using Lemma \ref{10}, we can simplify $\det(G_n)$ as follows:\\
Consider the tuple $(D,i)$ such that $D$ is an element of $\mathfrak{D}_n$ and $a_i=k$ in $D$.
If $a_{i+1}=k+1$, then $\Gamma(a_i,a_{i+1})$ is 1 by Lemma \ref{10}.
So $(D,i)$ will contribute 1 to $\det(G_n)$.
If $a_{i+1}=k-1$, then $\Gamma(a_i,a_{i+1})=\frac{\Delta_{k+1}}{\Delta_k}$.
So $(D,i)$ will contribute $\frac{\Delta_k}{\Delta_{k-1}}$ to $\det(G_n)$.
We denote  by $\mathfrak{S}_k$ the set of all tuple $\{(D,i)\}$ with $D\in\mathfrak{D}_n$ and $a_i=k,a_{i+1}=k-1$ in $D$.
Let $\alpha_k$ be the cardinality of $\mathfrak{S}_k$. Then
\begin{align*}
\det(G_n) = \Delta_1^{\card\mathfrak{D}_n}\prod_{k=1}^n(\frac{\Delta_k}{\Delta_{k-1}})^{\alpha_k}\ .
\end{align*}
Now, the theorem is reduced to calculate $\alpha_k$ for each $k$.

\begin{prop}
\[ \alpha_k =
\left(
\begin{array}{c}
2n \\
n-k
\end{array}
\right) -
\left(
\begin{array}{c}
2n \\
n-k-1
\end{array}
\right).
\]
\end{prop}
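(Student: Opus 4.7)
The plan is to translate $\alpha_k$ into a lattice-path enumeration and then apply the reflection principle. Using the bijection $f : \mathfrak{D}_n \to \mathcal{D}_{(2n,0)}$ of Section \ref{latticepath}, each pair $(D, i) \in \mathfrak{S}_k$ corresponds to a Dyck path $P$ of length $2n$ together with a marked south-east step from height $k$ down to height $k-1$ at time $i$. Thus $\alpha_k$ equals the total number of such (Dyck path, marked down-step at height $k$) pairs.

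I would then exhibit a bijection between this set and the set of non-negative lattice paths from $(0,0)$ to $(2n, 2k)$. Given a marked Dyck path $(P, i)$, decompose $P = P_1 \cdot D \cdot P_2$, where $P_1$ has length $i$ and ends at height $k$, $D$ is the marked down-step, and $P_2$ has length $m = 2n - i - 1$ running from height $k-1$ back to $0$. Define the image $\Phi(P, i) := P_1 \cdot U \cdot \widehat{P}_2$, where $\widehat{P}_2$ denotes the reverse-and-flip of $P_2$ (the step sequence is reversed and each up/down is interchanged). A short calculation shows $\widehat{P}_2(s) = (k+1) + P_2(m-s)$, so $\widehat{P}_2$ starts at $k+1$, ends at $2k$, and remains $\geq k+1$; hence $\Phi(P, i)$ is a non-negative path from $(0,0)$ to $(2n, 2k)$. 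The inverse sends a non-negative path $Q$ to the pair $(P, i)$ where $i$ is the last time at which $Q(i) \leq k$; one verifies $Q(i) = k$ and $Q(i+1) = k+1$, and the suffix of $P$ is recovered by the same reverse-and-flip.

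Finally, the reflection principle yields the desired count: there are $\binom{2n}{n-k}$ unconstrained lattice paths from $(0,0)$ to $(2n, 2k)$ (coming from $n+k$ up-steps and $n-k$ down-steps), and reflecting the prefix up to the first visit to $-1$ across the line $y = -1$ puts the paths that violate non-negativity into bijection with unconstrained paths from $(0, -2)$ to $(2n, 2k)$, numbering $\binom{2n}{n-k-1}$. Subtracting gives $\alpha_k = \binom{2n}{n-k} - \binom{2n}{n-k-1}$.

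The main obstacle lies in the second step, namely in choosing the right folding operation for the bijection: a naive reflection of the suffix across $y = k - \tfrac{1}{2}$ does not preserve non-negativity, whereas the reverse-and-flip raises the baseline of the suffix from $0$ up to $k+1$ and keeps it safely above the $x$-axis. Alternatively, this identity is essentially the combinatorial proposition of Di Francesco \cite{F} alluded to in the introduction, and can simply be invoked without a separate bijective argument.
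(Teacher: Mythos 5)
Your proposal is correct and follows essentially the same route as the paper: identify $\alpha_k$ with the number of Dyck paths of length $2n$ carrying a marked down-step from height $k$ to $k-1$, biject these with the nonnegative paths in $\mathcal{D}_{(2n,2k)}$, and count the latter by the reflection principle as $\binom{2n}{n-k}-\binom{2n}{n-k-1}$. The only difference is cosmetic: your folding map (cut at the marked step and reverse-and-flip the suffix, so the cut position is itself the marked position) is a slightly cleaner, self-contained variant of Di Francesco's correspondence, which the paper instead quotes and then describes (cutting at the rightmost up-crossing of $y=k$, reversing and shifting the suffix, and locating the marked step afterwards).
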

\begin{proof}
In Section \ref{latticepath}, we already had a 1-1 correspondence $f$ between $\mathfrak{D}_n$ (the new basis we constructed) 
and $\mathcal{D}_{(2n,0)}$ (Dyck paths from $(0,0)$ to $(2n,0)$).
With respect to this correspondence, each pair $(D,i)$ in $\mathfrak{S}_k$ is associated to a pair $(f(D),i)$ with $a_i=k,a_{i+1}=k-1$ in $f(D)$, 
that is the step from $(i,a_i=k)$ to $(i+1,a_{i+1}=k-1)$ in the path $f(D)$.
See Figure \ref{f4} for an example.
Denote by $\mathcal{S}_k$ the set of all pairs $(f(D),i)$, where $f(D)$ has a step from $k$ to $k-1$ at $i$.
Thus we have a 1-1 correspondence between $\mathfrak{S}_k$ and $\mathcal{S}_k$.
\begin{figure}[h]
\centering
   \includegraphics[width=1.2in,height=1.2in]{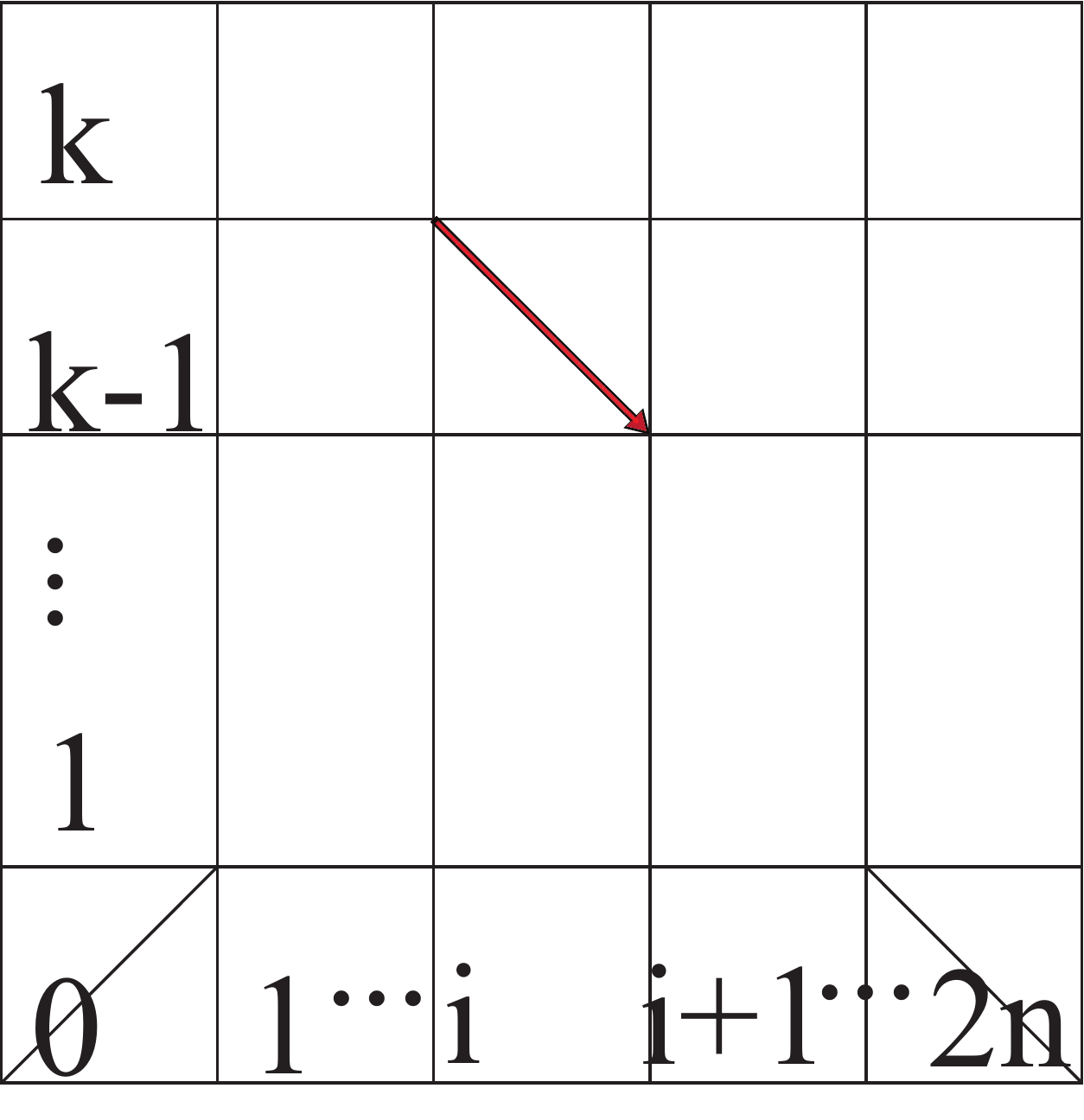}
   \caption{The bold step is a step going down from $(i,a_i=k)$ to $(i+1,a_{i+1}=k-1)$.}
   \label{f4}
\end{figure}
Di Francesco \cite[page 562]{F} set up a 1-1 correspondence from $\mathcal{S}_k$ to $\mathcal{D}_{(2n,2k)}$.
Then we have
\[ \alpha_k=\card{\mathcal{D}_{(2n,2k)}}=
\left(
\begin{array}{c}
2n \\
n-k
\end{array}
\right) -
\left(
\begin{array}{c}
2n \\
n-k-1
\end{array}
\right).
\]

For the convenience of reader, we now give Di Francesco's correspondence in our terminology.
For an element $\hat{P}\in\mathcal{D}_{(2n,2k)}$, it should intersect the horizontal line $y=k$ in a point $p=(i,a_i=k)$ with $a_{i+1}=k+1$ and $a_{i-1}=k-1$ at least once.
Let $p$ be the rightmost such intersection. Now we cut $\hat{P}$ at the point $p$, 
reflect the right part of $\hat{P}$ with respect to $y$-axis, and shift it down by $k$ units.
Then we glue this part back to the left part. We get a Dyck path $P$ from $(0,0)$ to $(2n,0)$.
In the resulting path $P$, we then choose the smallest $i'\geq i$, such that $a'_i=k$ and $a_{i'+1}=k-1$.
We associate the path $\hat{P}$ to the pair $(P,i')$.
Therefore, we construct a map $\phi:\mathcal{D}_{(2n,2k)}\rightarrow\mathcal{S}_k$.\\
Conversely, for a pair $(P,i)$, where $P\in\mathcal{D}_{(2n,0)}$.
We choose the largest $i'\leq i$ with $a_{i'}=k$ and $a_{i'-1}=k-1$.
We cut the path $P$ at $i'$, reflect the right part with respect to the $y$-axis, shift up by $n$ units and glue it back.
Thus we construct a path $\hat{P}$ in $\mathcal{D}_{(2n,2k)}$.
We associate the tuple $(P,i)$ to the path $\hat{P}$.
Therefore, we construct a map $\varphi:\mathcal{S}_k\rightarrow\mathcal{D}_{(2n,2k)}$.\\
It is easy to see that $\phi\varphi=id$ and $\varphi\phi=id$.
Therefore, we have constructed a 1-1 correspondence between $\mathcal{S}_k$ and $\mathcal{D}_{(2n,2k)}$.

By the reflection principle, we have
\[ \card\mathcal{D}_{(2n,2k)} =
\left(
\begin{array}{c}
2n \\
n-k
\end{array}
\right) -
\left(
\begin{array}{c}
2n \\
n-k-1
\end{array}
\right).
\]
Therefore, we have

\[ \alpha_k =
\left(
\begin{array}{c}
2n \\
n-k
\end{array}
\right) -
\left(
\begin{array}{c}
2n \\
n-k-1
\end{array}
\right).
\]
\end{proof}

\section{Relation between $\mathfrak{D}_n$ and Di Francesco's second basis.}
In this section, we extend our ring $\Lambda$ to the complex numbers $\mathbb{C}$ and let $A$ be any non-zero complex number which is not a root of unity.

\begin{de}
\begin{equation}
ND_{a_1,...,a_{2n-1}}=\frac{D_{a_1,...,a_{2n-1}}}{<D_{a_1,...,a_{2n-1}},D_{a_1,...,a_{2n-1}}>^{\frac{1}{2}}}.
\notag
\end{equation}
We call $ND_{a_1,...,a_{2n-1}}$ the normalization of $D_{a_1,...,a_{2n-1}}$, 
and denote the normalized basis by $\mathfrak{ND}_n$.
\end{de}

\begin{thm}
$\mathfrak{ND}_n$ is the same basis as Di Francesco's second basis in \cite{F}.
\end{thm}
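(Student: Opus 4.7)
The plan is to match $\mathfrak{ND}_n$ to Di Francesco's second basis term-by-term under the Dyck-path bijection $f:\mathfrak{D}_n\to\mathcal{D}_{(2n,0)}$ constructed in Section \ref{latticepath}. First I would recall from \cite{F} that Di Francesco's second basis is indexed by Dyck paths from $(0,0)$ to $(2n,0)$; pulling this indexing back by $f^{-1}$ yields the same set $\mathfrak{A}_n$ of admissible sequences that parametrizes both $\mathfrak{D}_n$ and $\mathfrak{ND}_n$. This supplies a canonical index-wise pairing between the two bases.

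Next I would invoke the observation recorded in the introduction that $\mathfrak{D}_n$ is a rescaled copy of Di Francesco's basis. Thus for each $(a_1,\ldots,a_{2n-1})\in\mathfrak{A}_n$ there is a scalar $\lambda_{a_1,\ldots,a_{2n-1}}$ so that Di Francesco's basis element at that index equals $\lambda_{a_1,\ldots,a_{2n-1}}\cdot D_{a_1,\ldots,a_{2n-1}}$. The theorem therefore reduces to the identification $\lambda_{a_1,\ldots,a_{2n-1}}=\langle D_{a_1,\ldots,a_{2n-1}},D_{a_1,\ldots,a_{2n-1}}\rangle^{-1/2}$ for every index.

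To pin down $\lambda$ I would compare Gram diagonals. Both $\mathfrak{D}_n$ (by Lemma \ref{8}) and Di Francesco's second basis (by its construction in \cite{F}) are orthogonal with respect to $G_n$; Di Francesco moreover normalizes his basis so that every diagonal entry of the Gram matrix equals $1$. Combining the scalar relation with these two orthogonality statements gives $\lambda^2\langle D_{a_1,\ldots,a_{2n-1}},D_{a_1,\ldots,a_{2n-1}}\rangle=1$, so $\lambda=\pm\langle D,D\rangle^{-1/2}$, which matches $ND_{a_1,\ldots,a_{2n-1}}$ up to sign. The sign is then fixed by a single coefficient check: expanding each element along $\mathfrak{B}_n$ and using Proposition \ref{14}, the leading coefficient (with respect to the total order on $\mathfrak{A}_n$ from Section \ref{relation}) is $1$ on both sides, which forces the positive square root throughout.

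The main obstacle is purely bookkeeping: one must translate the explicit formulas in \cite{F} into the present skein-theoretic language and verify that the rescaling constants alluded to in the introduction really do take the form $\langle D,D\rangle^{-1/2}$, and that the sign is uniformly $+1$ across all of $\mathfrak{A}_n$. Once these conventions are reconciled, the identification of $\mathfrak{ND}_n$ with Di Francesco's second basis is immediate from the Gram-diagonal argument above.
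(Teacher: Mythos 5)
There is a genuine gap at the heart of your argument: the step where you ``invoke the observation recorded in the introduction that $\mathfrak{D}_n$ is a rescaled copy of Di Francesco's basis'' is circular. That sentence in the introduction is an announcement of the result this theorem proves, not a citable prior fact. Once you strip it out, what remains is: both $\mathfrak{D}_n$ and Di Francesco's second basis are orthogonal for $G_n$ and are indexed by the same set of Dyck paths. That is not enough to conclude that the two bases are element-wise proportional under the index matching --- a nondegenerate symmetric bilinear form admits many orthogonal (even orthonormal) bases that are not related to one another by rescaling individual vectors (already in rank $2$, $\{v_1,v_2\}$ and $\{(v_1+v_2)/\sqrt{2},(v_1-v_2)/\sqrt{2}\}$ are both orthonormal). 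So the existence of the scalars $\lambda_{a_1,\ldots,a_{2n-1}}$, which is the entire content of the theorem, is assumed rather than proved; the Gram-diagonal comparison and the sign check only determine $\lambda$ \emph{after} proportionality is known.

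The paper closes exactly this gap by a different mechanism: Di Francesco's second basis is \emph{defined} in \cite{F} by a recursion (his equation 3.19) together with an initial element $u_n$, and these data determine the basis uniquely. The paper's proof applies the Jones--Wenzl recursion at the $i$th arc to express $D_{a_1,\ldots,a'_i,\ldots,a_{2n-1}}$ (with $a'_i=a_i+2$) in terms of $e_i D_{a_1,\ldots,a_i,\ldots,a_{2n-1}}$ and $D_{a_1,\ldots,a_i,\ldots,a_{2n-1}}$, normalizes using Lemma \ref{7} and Corollary \ref{10}, identifies the resulting coefficients with Di Francesco's $\mu_i$, and checks the initial condition $u_n=ND_{1,0,1,\ldots,0,1}$. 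If you want to salvage your approach you would need to supply the missing proportionality by some independent argument; the most direct route is precisely the recursion-matching the paper carries out.
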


\begin{proof}
Di Francesco defined his orthonormal basis by a recursive equation \cite[equation 3.19, Page 555]{F}.
So we just need to show that $\mathfrak{ND}_n$ satisfies the recursive equation and the initial condition.

Let $D_{a_1,...,a_{i-1},a_i,a_{i+1},...,a_{2n-1}}$ and $D_{a_1,...,a_{i-1},a'_i,a_{i+1},...,a_{2n-1}}$ be two elements in $\mathfrak{D}_n$ such that $a_i=a_{i-1}-1=a_{i+1}-1$ and $a'_i=a_{i}+2$,
that means they are equal everywhere except at $i$th arc.
Then using the recursive formula for Jones-Wenzl idempotents at $i$th arc,
we have 
\begin{equation}
D_{a_1,...,a_{i-1},a_{i'},a_{i+1},...,a_{2n-1}}=D_{a_1,...,a_{i-1},\hat{a}_i,a_{i+1},...,a_{2n-1}}-\frac{\Delta_{a_i}}{\Delta_{a_i+1}}D_{a_1,...,a_{i-1},a_i,a_{i+1},...,a_{2n-1}}
\notag
\end{equation}
where $D_{a_1,...,a_{i-1},\hat{a}_i,a_{i+1},...,a_{2n-1}}$ is as in Figure \ref{f14}.
\begin{figure}[h]
\centering
\includegraphics[width=1in,height=1.2in]{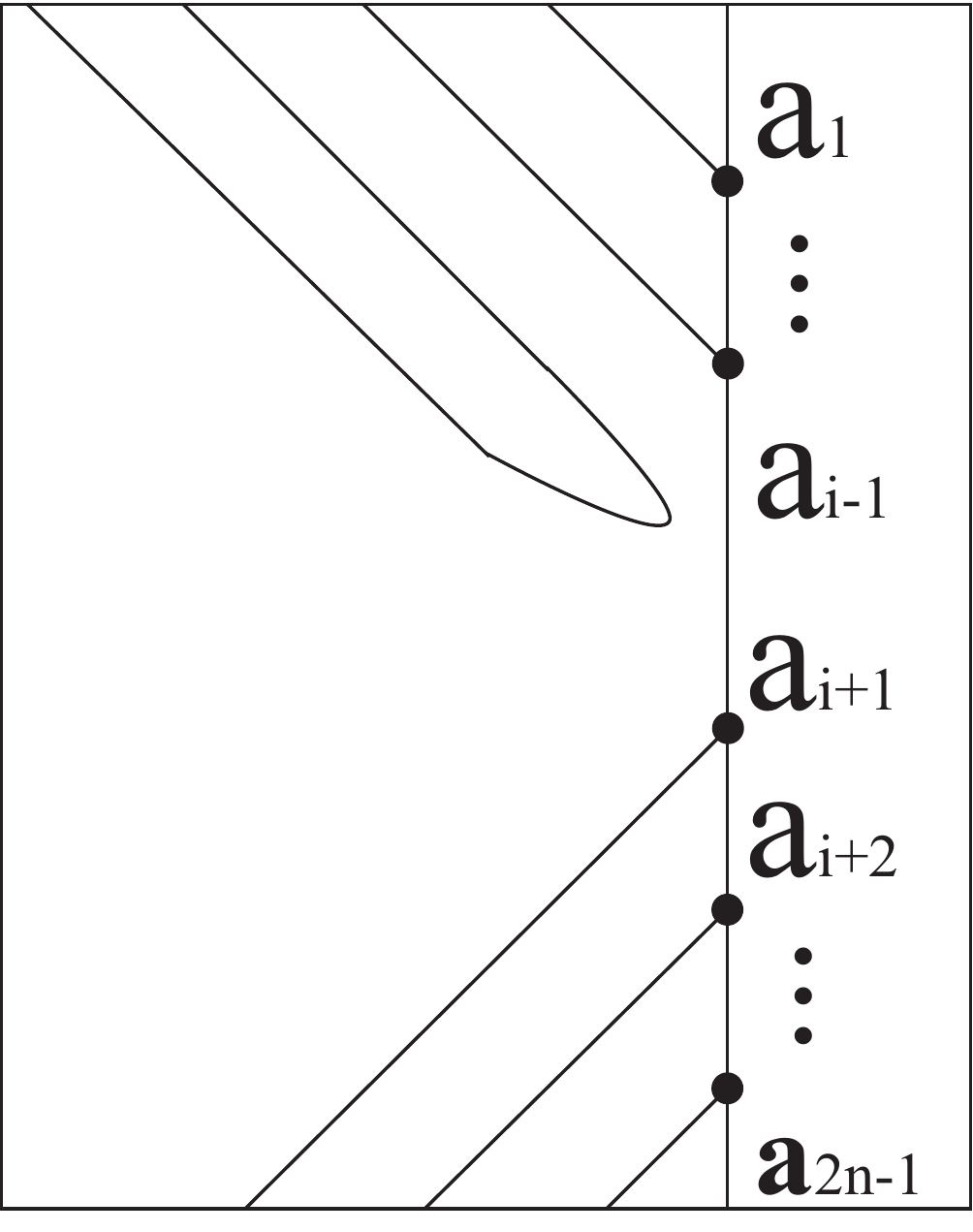}
\caption{$a_i$ disappears.}
\label{f14}
\end{figure}
Since $a_{i-1}=a_{i+1}$, this is well defined.
It is easy to see that $D_{a_1,...,a_{i-1},\hat{a}_i,a_{i+1},...,a_{2n-1}}=e_iD_{a_1,...,a_{i-1},a_{i},a_{i+1},...,a_{2n-1}}$,
where $e_i$ acts on $D_{a_1,...,a_{i-1},a_{i},a_{i+1},...,a_{2n-1}}$ as in \cite{F}.
We divide the equation by the norm of $D_{a_1,...,a_{i-1},a_{i'},a_{i+1},...,a_{2n-1}}$ on both sides.
By Lemma \ref{7}, we have 
\begin{eqnarray}
&&ND_{a_1,...,a_{i-1},a'_i,a_{i+1},...,a_{2n-1}}\notag\\
&=&\frac{(e_iD_{a_1,...,a_{i-1},a_i,a_{i+1},...,a_{2n-1}}-\frac{\Delta_{a_i}}{\Delta_{a_i+1}}D_{a_1,...,a_{i-1},a_i,a_{i+1},...,a_{2n-1}})}
{(\Gamma(a_1,a_2)\dots\Gamma(a_{i-1},a'_i)\Gamma(a'_i,a_{i+1})\dots\Gamma(a_{2n-2},a_{2n-1})\Delta_{a_{2n-1}})^{\frac{1}{2}}}\notag\\
&=&(e_i-\frac{\Delta_{a_i}}{\Delta_{a_i+1}})ND_{a_1,...,a_{i-1},a_i,a_{i+1},...,a_{2n-1}}\times\notag\\
&&\frac{(\Gamma(a_1,a_2)\dots\Gamma(a_{i-1},a_{i})\Gamma(a_{i},a_{i+1})\dots\Gamma(a_{2n-2},a_{2n-1})\Delta_{a_{2n-1}})^{\frac{1}{2}}}{(\Gamma(a_1,a_2)\dots\Gamma(a_{i-1},a'_i)\Gamma(a'_i,a_{i+1})\dots\Gamma(a_{2n-2},a_{2n-1})\Delta_{a_{2n-1}})^{\frac{1}{2}}}\notag\\
&=&(e_i-\frac{\Delta_{a_i}}{\Delta_{a_i+1}})ND_{a_1,...,a_{i-1},a_i,a_{i+1},...,a_{2n-1}}\frac{(\Gamma(a_{i-1},a_{i})\Gamma(a_{i},a_{i+1}))^{\frac{1}{2}}}{(\Gamma(a_{i-1},a'_i)\Gamma(a'_i,a_{i+1}))^{\frac{1}{2}}}\notag\\
&=&(e_i-\frac{\Delta_{a_i}}{\Delta_{a_i+1}})ND_{a_1,...,a_{i-1},a_i,a_{i+1},...,a_{2n-1}}\frac{(\Gamma(a_{i-1},a_{i}))^{\frac{1}{2}}}{(\Gamma(a'_i,a_{i+1}))^{\frac{1}{2}}}\ .
\notag
\end{eqnarray}
By definition,
\begin{equation} 
\Gamma(a_{i-1},a_{i})=\mu_{a_i}, \Gamma(a'_i,a_{i+1})=\mu_{a_{i+1}},
\notag
\end{equation}
where $\mu_i$ as in \cite{F}.
Thus, $\mathfrak{ND}_n$ satisfies the recursive equation.
Moreover, it is easy to see that 
\begin{equation}
u_n=ND_{1,0,1,0,...,0,1,0,1},
\notag
\end{equation}
where $u_n$ is as in \cite[equation 3.5,Page 551]{F}.
So $\mathfrak{ND}_n$ satisfies the initial condition.
\end{proof}

\section*{Acknowledgements}
The author thanks his advisor Professor Gilmer for helpful discussions, the referee for useful comments and Peizhe Shi and Meng Yu for help on \LaTeX{}. The author was partially supported by a research assistantship funded by NSF-DMS-0905736.

\end{document}